\DeclareMathOperator*{\osc}{osc}
\numberwithin{equation}{section}
\newtheorem{theorem}{Theorem}[section]
\newtheorem{lemma}{Lemma}[section]
\newtheorem{remark}{Remark}[section]
\newtheorem{proposition}{Proposition}[section]
\def\XXint#1#2#3{{\setbox0=\hbox{$#1{#2#3}{\int}$}
     \vcenter{\hbox{$#2#3$}}\kern-.5\wd0}}
\begin{document}

\title{On the weak Harnack inequality for unbounded non-negative super-solutions of degenerate double-phase parabolic equations
}

\author{ Mariia O. Savchenko, Igor I. Skrypnik, Yevgeniia A. Yevgenieva
 }

  \maketitle

  \begin{abstract} We give a  proof of the weak Harnack inequality for   non-negative super-solutions of degenerate double-phase parabolic equations  under the additional assumption that $u\in L^{s}_{loc}(\Omega_{T})$ with some sufficiently large $s$.

\textbf{Keywords:}
weak Harnack inequality, unbouded super-solutions, double-phase parabolic equations

\textbf{MSC (2010)}: 35B40, 35B45, 35B65.


\end{abstract}

\pagestyle{myheadings} \thispagestyle{plain}
\markboth{Mariia O. Savchenko, Igor I. Skrypnik, Yevgeniia A.Yevgenieva}
{On the weak Harnack inequality....}

\section{Introduction}

In this paper, we are concerned with double-phase parabolic equations.
Let $\Omega$ be a domain in $\mathbb{R}^{n}$, $T>0$, $\Omega_{T}:= \Omega \times (0, T)$,
we study unbounded super-solutions to the equation
\begin{equation}\label{eq1.1}
u_{t}-\textrm{div}\mathbb{A}(x, t, \nabla u)=0, \quad (x, t)\in \Omega_{T}.
\end{equation}
Throughout the paper we suppose that the functions $\mathbb{A}:\Omega_{T}\times \mathbb{R}^{n} \rightarrow \mathbb{R}^{n}$ are such that
$\mathbb{A}(\cdot, \cdot, \xi)$ are Lebesgue measurable for all $\xi \in \mathbb{R}^{n}$,
and $\mathbb{A}(x, t, \cdot)$ are continuous for almost all $(x, t)\in \Omega_{T}$.
We also assume that the following structure conditions are satisfied
\begin{equation}\label{eq1.2}
\begin{aligned}
\mathbb{A}(x, t, \xi)\xi &\geqslant  K_{1}\,\big( |\xi|^{p} + a(x,t)|\xi|^{q}\big):=K_{1}\,\varphi(x,t, |\xi|), \quad 2< p < q,
   \\
\ |\mathbb{A}(x, t, \xi)| &\leqslant  K_{2}\big( |\xi|^{p-1} + a(x,t)|\xi|^{q-1}\big)= K_{2}\,\frac{\varphi(x,t,|\xi|)}{|\xi|},
\end{aligned}
\end{equation}
where $K_{1}$, $K_{2}$ are positive constants and function $a(x, t) \geqslant 0 : \Omega_{T}\times \mathbb{R}_{+}\rightarrow \mathbb{R}_{+}$
satisfies the following condition:
\begin{itemize}
\item[($A$)] for any cylinder \\$Q_{r,r^{2}}(x_{0}, t_{0}) := B_{r}(x_{0},t_{0}) \times (t_{0}, t_{0}+r^{2}) \subset Q_{8r,(16r)^{2}}(x_{0}, t_{0}) \subset \Omega_{T}$ there holds
\begin{equation*}
\osc\limits_{Q_{r,r^{2}}(x_{0},t_{0})} a(x,t)\leqslant A r^{\alpha},
\end{equation*}
with some $A>0$ and some $\alpha \in(0,1]$.
\end{itemize}

It is known that for integrands with ($p,q$)-growth, it is crucial that the gap between $p$ and $q$ is not too large. Otherwise, in the case
$q> \dfrac{np}{n-p}$, $p<n$ there exist unbounded minimizers (we refer the reader to \cite{Alk, AlkSur, ArrHue, BarColMin1, BarColMin2, BarColMin3, BurSkr, ColMin1, ColMin2, ColMin3, CupMarMas, HadSkrVoi, HarKinLuk, HarHasLee, HarKuuLukMarPar, RagTac, SavSkrYev2, ShaSkrVoi, Skr, SkrVoi, SkrVoi1, Sur, Wan} for results, references, historical notes and extensive survey of regularity issues). It was Ok \cite{Ok}, who proved the boundedness of minimizers of elliptic functionals of double-phase type  under some additional assumption. More precisely, under the condition $\osc\limits_{B_{r}(x_{0})}a(x)\leqslant A r^{\alpha}$, the minimizer is bounded by a constant depending on $||u||_{L^{s}}$, provided that $\alpha \geqslant q-p$ and $s \geqslant \dfrac{(q-p) n}{\alpha +p -q}$. This condition, for example, gives a possibility to improve the regularity results \cite{BarColMin1, BarColMin2, BarColMin3, ColMin1, ColMin2} for unbounded minimizers with constant depending on $||u||_{L^{s}}$. The weak Harnack inequality for unbounded super-solutions of the corresponding elliptic equations with generalized Orlicz growth under similar condition was proved in \cite{BenHarHasKar}. This result was generalized in \cite{SavSkrYev1}  for unbounded functions from the corresponding De Giorgi's classes $DG^{-}_{\varphi}(\Omega)$.

 The parabolic theory for quasi-linear parabolic equations differs substantially from the elliptic case. 
 This becomes clear by looking at the Barenblatt solution of the parabolic $p$-Laplace equation. DiBenedetto developed an innovative intrinsic scaling method (see \cite{DiB} and the references to
the original papers therein) and proved the H\"{o}lder continuity of weak solutions to \eqref{eq1.1} for $p=q \neq 2$. The intrinsic Harnack inequality for parabolic $p$-Laplace evolution equation was proved in the breakthrough papers \cite{DiBGiaVes1, DiBGiaVes2}. The weak Harnack inequality for parabolic $p$-Laplacian was obtained by Kuusi \cite{Kuu} by using the Krylov-Safonov covering argument. A similar result was proved in \cite{DiBGiaVes3} by using the local clustering lemma. As for parabolic equations with nonstandard growth, this question remains open.

The local boundedness of solutions of parabolic equations is known under the condition $q \leqslant p\dfrac{n+2}{n}$ (see, for example, \cite{Sin}). The upper bound for the number $q$ stems from the parabolic embedding. The intrinsic Harnack inequality for bounded solutions to corresponding singular parabolic equations with ($p,q$)- growth was proved in \cite{Skr}. The weak Harnack inequality for bounded super-solutions of the $p(x)$- Laplace evolution equation was obtained in \cite{Sur}.  In this paper, using DiBenedetto's approach we prove the weak Harnack inequality for unbounded non-negative super-solutions to equation \eqref{eq1.1}  under the condition similar to that of \cite{Ok}. We will focus only on the case $p>2$ and leave the case $p <2< q$  for further research.

To formulate our results, let us remind the reader of the definition of a weak super-solution to equation \eqref{eq1.1}.  We say that function $u$ is a  weak  super-solution to Eq. \eqref{eq1.1} if $u \in V^{2,q}(\Omega_{T}):= C_{\textrm{loc}}(0, T; L^{2}_{\textrm{loc}}(\Omega))\cap L_{\textrm{loc}}^{q}(0, T; W_{\textrm{loc}}^{1,q}(\Omega)),$
and for any compact set $E \subset \Omega$
and every subinterval $[t_{1}, t_{2}]\subset (0, T]$ there holds
\begin{equation}\label{eq1.3}
\int\limits_{E}u \zeta\, dx \bigg|^{t_{2}}_{t_{1}} + \int\limits^{t_{2}}\limits_{t_{1}}\int\limits_{E}
\{-u\zeta_{\tau}+ \mathbb{A}(x, \tau, \nabla u) \nabla \zeta\}\, dx\, d\tau \geqslant\, 0
\end{equation}
 for any testing functions $\zeta \in W^{1,2}(0, T; L^{2}(E))\cap L^{q}(0, T; W_{0}^{1,q}(E))$, $\zeta \geqslant 0$.

Technically, it would be convenient to have a formulation of a weak super-solution that involves $u_{t}$.
Let $\rho(x)\in C_{0}^{\infty}(\mathbb{R}^{n})$, $\rho(x)\geqslant 0$ in $\mathbb{R}^{n}$, $\rho(x)\equiv 0$ for
$|x| > 1$ and $\int\limits_{\mathbb{R}^{n}}\rho (x)\,dx=1$, and set
$$
\rho_{h}(x):= h^{-n}\rho\left(x/h\right), \quad
u_{h}(x, t):= h^{-1}\int\limits_{t}\limits^{t+h}\int\limits_{\mathbb{R}^{n}}u(y, \tau)\rho_{h}(x-y)\,dy\, d\tau .
$$

Fix $t \in (0, T)$ and let $h>0$ be so small that $0<t<t+h<T$.
In \eqref{eq1.3} take $t_{1}=t$, $t_{2}=t+h$ and replace $\zeta$ by $\int\limits_{\mathbb{R}^{n}}\zeta(y, t)\rho_{h}(x-y)\,dy$.
Dividing by $h$, since the testing function does not depend on $\tau$, we obtain
\begin{equation}\label{eq1.4}
\int\limits_{E\times \{t\}} \left(\frac{\partial u_{h}}{\partial t}\, \zeta+[\mathbb{A}(x, t, \nabla u)]_{h} \nabla \zeta\right)dx
\geqslant\,0,
\end{equation}
for all $t \in (0, T-h)$ and for all  $\zeta \in W^{1,q}_{0}(E)$, $\zeta \geqslant 0$.

Our main result reads as follows
\begin{theorem}\label{th1.1}
Let $u$ be a weak super-solution to equation \eqref{eq1.1}, let conditions \eqref{eq1.2} and ($A$) be fulfilled. Assume additionally that
$u \in L^{s}(\Omega_{T})$ and
\begin{equation}\label{eq1.5}
s \geqslant p-2 +\frac{(q-p)(n+p)}{\alpha+p-q}.
\end{equation}
Then there exist positive constants $C_{1}$, $C_{2}$, $C_{3} >0$ depending only on $n$, $p$, $q$, $K_{1}$, $K_{2}$, $A$ and $d:=\big(\iint\limits_{\Omega_{T}} u^{s} dx dt\big)^{\frac{1}{s}}$ such that for a.a. $(x_{0}, t_{0}) \in \Omega_{T}$,
 either
\begin{equation}\label{eq1.6}
\mathcal{I}:= \fint\limits_{B_{\rho}(x_{0})} u(x, t_{0})\, dx \leqslant C_{1}\left\{ \rho + \rho\,\,\psi^{-1}_{Q_{14\rho, (14\rho)^{2}}(x_{0},t_{0})}\bigg(\frac{\rho^{2}}{T-t_{0}}\bigg)\right\},
\end{equation}
or
\begin{equation}\label{eq1.7}
\mathcal{I} \leqslant C_{1} \inf\limits_{B_{4\rho}(x_{0})} u(\cdot, t),
\end{equation}
for all time levels
\begin{equation}\label{eq1.8}
t_{0}+ C_{2}\theta \leqslant t \leqslant t_{0}+ C_{3}\theta,\quad \theta:=\frac{\rho^{2}}
{\psi_{Q_{14\rho, (14\rho)^{2}}(x_{0},t_{0})}(\frac{\mathcal{I}}{\rho})},
\end{equation}
provided that $Q_{16 \rho, (16\rho)^{2}}(x_{0}, t_{0}) \subset \Omega_{T}$. Here \\$\fint\limits_{B_{\rho}(x_{0})} u(x, t_{0})\, dx := |B_{\rho}(x_{0})|^{-1} \int\limits_{B_{\rho}(x_{0})} u(x, t_{0})\, dx$, $\psi_{Q}(v):=\dfrac{\varphi^{+}_{Q}(v)}{v^{2}}= v^{p-2} + a^{+}_{Q} v^{q-2}$, $v>0$,  $a^{+}_{Q}:= \max\limits_{Q} a(x,t)$ and  $\psi^{-1}_{Q}(\cdot)$ is inverse function to $\psi_{Q}(\cdot)$.
\end{theorem}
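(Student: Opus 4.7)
The plan is to follow DiBenedetto's intrinsic scaling strategy as refined by Kuusi \cite{Kuu} and by DiBenedetto--Gianazza--Vespri \cite{DiBGiaVes3}, and to marry it with the double-phase techniques of Ok \cite{Ok} and their parabolic counterpart from \cite{Skr}. The intrinsic geometry is dictated by $\theta=\rho^{2}/\psi_{Q}(\mathcal{I}/\rho)$, chosen so that on the cylinder $B_{\rho}(x_{0})\times(t_{0},t_{0}+\theta)$ the equation behaves like a homogeneous $p$-Laplace-type equation tuned to the velocity $\mathcal{I}/\rho$. The two alternatives \eqref{eq1.6} and \eqref{eq1.7} are the standard dichotomy: either the forcing/boundary data dominate and $\mathcal{I}$ is comparable to the natural geometric scale, or the positivity of $u$ at time $t_{0}$ on average propagates forward in time to yield a pointwise lower bound on $B_{4\rho}(x_{0})$.

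First, I would derive Caccioppoli-type energy estimates and logarithmic estimates for $(u-k)_{-}$ on intrinsic cylinders by plugging test functions of the form $\zeta=\eta^{q}(u-k)_{-}$ and $\zeta=\eta^{q}\log\bigl(k/(u+c)\bigr)_{+}$ into the mollified weak formulation \eqref{eq1.4}; the structure conditions \eqref{eq1.2} produce estimates of the form
\[
\sup_{\tau}\int_{E}\eta^{q}(u-k)_{-}^{2}\,dx+\iint \eta^{q}\varphi\bigl(x,\tau,|\nabla (u-k)_{-}|\bigr)\,dx\,d\tau\;\lesssim\;\iint \varphi\!\left(x,\tau,\tfrac{(u-k)_{-}}{\rho}\right)\,dx\,d\tau.
\]
Second, I would establish a De Giorgi-type measure-to-pointwise lemma: if at some time $\bar{t}$ one has $|\{u(\cdot,\bar{t})\geq \mu\}\cap B_{\rho}|\geq \nu|B_{\rho}|$ with $\mu\sim \mathcal{I}$, then on an intrinsic sub-cylinder one can conclude $u\geq \sigma\mu$ for a universal $\sigma>0$. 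Third, I would run an expansion-of-positivity scheme, iterating this lemma and using the logarithmic estimate to transport measure information along the time axis, in order to enlarge the spatial ball from $B_{\rho/2}(x_{0})$ to $B_{4\rho}(x_{0})$ while losing only a fixed factor in the level. Finally, a Chebyshev-type observation on the assumption $\fint_{B_{\rho}}u(\cdot,t_{0})\,dx=\mathcal{I}$ would produce --- unless \eqref{eq1.6} is already verified --- a positive-measure set on which $u(\cdot,t_{0})\gtrsim \mathcal{I}$; feeding this slice into the expansion of positivity gives \eqref{eq1.7} uniformly on $[t_{0}+C_{2}\theta,t_{0}+C_{3}\theta]$.

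The main obstacle is the non-homogeneity intrinsic to the double-phase structure $\varphi(x,t,|\xi|)=|\xi|^{p}+a(x,t)|\xi|^{q}$, as every scaling step must simultaneously respect the $p$-term and the $q$-term and the intrinsic scale $\psi_{Q}(\mathcal{I}/\rho)$ is not a pure power. Freezing $a(x,t)$ on a cylinder by means of $(A)$ introduces an additive error of order $A\rho^{\alpha}|\nabla u|^{q}$, which does not have a natural sign and is the source of all difficulty. The hypothesis $u\in L^{s}$ with $s\geq p-2+(q-p)(n+p)/(\alpha+p-q)$ is precisely what is needed to absorb this error during the De Giorgi iteration: after interpolating via the parabolic Sobolev embedding --- whose effective dimension is $n+p$ rather than the elliptic $n$ of Ok's condition --- the global $L^{s}$-norm $d$ controls the surplus exactly when the displayed bound on $s$ is satisfied. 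The remaining bookkeeping, namely making the constants $C_{1},C_{2},C_{3}$ in \eqref{eq1.8} universal in $d$ and in $n,p,q,K_{1},K_{2},A$ while simultaneously tracking both phases through $\psi_{Q}$, is delicate but follows the same architecture as the pure $p$-Laplace case once Steps 1--3 above are in place.
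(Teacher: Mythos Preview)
Your outline captures the broad architecture---energy estimates, a measure-to-pointwise De Giorgi lemma, expansion of positivity, and a final step feeding measure information at $t_{0}$ into that expansion---but it has a genuine gap in the last step, and it underestimates what the expansion of positivity actually requires in the double-phase setting.

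The gap is in your ``Chebyshev-type observation''. From $\fint_{B_{\rho}}u(\cdot,t_{0})\,dx=\mathcal{I}$ alone you \emph{cannot} extract a lower bound on $|\{u(\cdot,t_{0})\geq c\mathcal{I}\}\cap B_{\rho}|$: Chebyshev goes the wrong way, and the standard trick $\mathcal{I}\leq \tfrac{\mathcal{I}}{2}+\bigl(\fint u^{m}\bigr)^{1/m}\bigl(|\{u\geq\mathcal{I}/2\}|/|B_{\rho}|\bigr)^{1-1/m}$ needs an $L^{m}$ bound with $m>1$ on that time slice, which you do not have (the hypothesis $u\in L^{s}(\Omega_{T})$ is space--time and gives no control on a single slice). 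The paper resolves this by Kuusi's \emph{hot/cold alternative}: either there exists some time $\bar t\in(t_{0},t_{0}+\theta)$ and some $\lambda_{0}>1$ with $|\{u(\cdot,\bar t)\geq \lambda_{0}\mathcal{I}\}|\geq \lambda_{0}^{-\xi_{0}/B}|B_{2\rho}|$ (the hot case, where one applies the expansion of positivity directly after verifying via the $L^{s}$ bound the auxiliary inequality $k^{s}\lesssim k^{p-2}\rho^{-n-p}+a^{+}k^{q-2}\rho^{-n-q}$ that the expansion-of-positivity theorem needs), or the opposite inequality holds for all $t$ and all $\lambda>1$ (the cold case). In the cold case the failure of largeness yields $\fint_{B_{2\rho}}u^{\varkappa}(\cdot,t)\,dx\lesssim\mathcal{I}^{\varkappa}$ uniformly in $t$ for a small $\varkappa>0$, and a Moser iteration (using Lemma~2.4 with negative-power test functions $(u+\delta)^{-\varepsilon}$, parabolic Sobolev, and the condition on $s$ to absorb the $a^{+}-a^{-}$ defect) upgrades this to $L^{m}$ with $m>1$; only then can one locate a good time slice and run the measure-to-pointwise argument. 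Your plan omits this entire cold-case machinery, without which the final step does not start.

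Separately, the expansion of positivity is more delicate here than you indicate. The paper does \emph{not} use logarithmic estimates to transport measure in time; instead it uses a direct energy estimate (Lemma~3.1) to propagate the measure information, the local clustering lemma of DiBenedetto--Gianazza--Vespri to concentrate it, and then the change of variables $w(y,\tau)=e^{\tau}k^{-1}u(\bar x+ry,\,\bar t+\text{(intrinsic time)})$. The key structural point you miss is that this change of variables is \emph{different depending on the phase}: one must split into the $p$-phase $a^{+}_{Q}(\bar\varepsilon k/\rho)^{q-p}\leq 1$ and the $(p,q)$-phase $a^{+}_{Q}(\bar\varepsilon k/\rho)^{q-p}\geq 1$ (and a further sub-split on the smaller cylinder), using the time scale $r^{p}(e^{\tau}/k)^{p-2}$ in the former and $r^{q}(e^{\tau}/k)^{q-2}/a^{+}$ in the latter. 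In the $(p,q)$-phase one also needs $a^{+}\leq 2a^{-}$ on the relevant cylinder, and this is where the $L^{s}$ hypothesis with exponent \eqref{eq1.5} is actually spent---via the auxiliary bound $k^{s}\leq\gamma_{0}\bigl(k^{p-2}\rho^{-n-p}+a^{+}k^{q-2}\rho^{-n-q}\bigr)$---rather than through a generic ``absorb the freezing error in Sobolev'' step.
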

\begin{remark}
 If inequality \eqref{eq1.6} is violated, i.e.
\begin{equation}\label{eq1.9}
\mathcal{I}  \geqslant C_{1}\left\{ \rho + \rho\,\,\psi^{-1}_{Q_{14\rho, (14\rho)^{2}}(x_{0},t_{0})}\bigg(\frac{\rho^{2}}{T-t_{0}}\bigg)\right\},
\end{equation}
 then  the inclusion
$Q_{16\rho, C_{3}\theta}(x_{0}, t_{0})\subset Q_{16\rho,(16 \rho)^{2}}(x_{0}, t_{0})$ holds,
provided that $C_{1}$ is large enough.  We need this inclusion only in order to use the condition ($A$) in the cylinder $Q_{14\rho,(14\rho)^{2}}(x_{0}, t_{0})$. In the case when $a(x,t)$ does not depend on $t$,
 the first inequality in \eqref{eq1.6} is not required. In this case, it is enough for the inclusion $Q_{16\rho, C_{3}\theta}(x_{0}, t_{0}) \subset \Omega_{T}$, which holds by the second inequality in \eqref{eq1.9}.
\end{remark}

The rest of the paper contains the proof of Theorem~\ref{th1.1}. In Section~\ref{Sec2} we collect some auxiliary propositions and required integral estimates of super-solutions.

The proof of Theorem \ref{th1.1} consists of several steps. First, in Section~\ref{Sec3} we establish the theorem concerning the expansion of positivity (see Proropsition \ref{pr3.1} and Theorem \ref{th3.1}). 
Roughly speaking, it claims that information on the measure of the  positivity set  of $u$ at the time level $\bar{t}$ over the ball $B_{r}(\bar{x})$:
\begin{equation*}
|\{ B_{r}(\bar{x}) : u(\cdot, \bar{t}) \geqslant k \}| \geqslant \beta|B_{r}(\bar{x})|,
\end{equation*}
with some $k >0$,  $\beta \in (0, 1)$ and some  $r>0$: $Q_{6r,(6r)^{2}}(\bar{x}, \bar{t})\subset Q_{\rho,\rho^{2}}(x_{0}, t_{0})$,
translates into an expansion of the positivity set both in space (from a ball $B_{r}(\bar{x})$ to $B_{2r}(\bar{x})$), and in time (from $\bar{t}$ to $t:t>\bar{t}$). 

To prove this statement, we adapt the concepts of DiBenedetto, Gianazza, and Vespri \cite[Chapter 4]{DiBGiaVes3} to the non-standard case. First, we ''expand'' the positivity at some time level $\bar{t}$ on the ball $B_{r}(\bar{x})$ to further time level $t>\bar{t}$ within the same ball (see Lemma~\ref{lem3.1}). Then we ''expand'' it spatially to the larger ball $B_{2r}(\bar{x})$ (Proposition~\ref{pr3.1}). While the proof of the Lemma 3.1 is unified and can be executed without any challenges, the proof of Proposition 3.1 requires more complex approach. As we want to apply the idea of \cite[Chapter 4]{DiBGiaVes3} and obtain pointwise estimates for the newly defined unknown functions (different, depending on the phase, see \eqref{eq1.10}, \eqref{eq1.11} below), we are faced with technical difficulties. To overcome them we divide the proof into two cases: the first is the so-called case of the $p\,$-phase
\begin{equation}\label{eq1.10}
a^{+}_{Q_{6\rho,(6\rho)^{2}}(x_{0},t_{0})}\,\bigg(\frac{\bar{\varepsilon} k}{\rho}\bigg)^{q-p} \leqslant 1,
\end{equation}
and the second is the case of the so-called ($p, q$)-phase 
\begin{equation}\label{eq1.11}
a^{+}_{Q_{6\rho,(6\rho)^{2}}(x_{0},t_{0})}\,\bigg(\frac{\bar{\varepsilon} k}{\rho}\bigg)^{q-p} \geqslant 1,
\end{equation}
with some sufficiently small $\bar{\varepsilon} \in (0,1)$. The definition of $p$-phase and ($p, q$)-phase is technical in our case, so it differs from the corresponding ones in the elliptic case (see, for example \cite{BarColMin1}). Moreover, we need to split the case of $p\,$-phase into two subcases, namely, $p\,$- and ($p, q$)-phase for the smaller cylinder $Q_{6r,(6r)^{2}}(\bar{x}, \bar{t})$ (see cases $(i)$ and $(ii)$ in Section 3). It seems that the most difficult case is when the $p$-phase condition holds in the large cylinder $Q_{6\rho,(6\rho)^{2}}(x_{0},t_{0})$, and simultaneously the ($p, q$)-phase condition holds in a small cylinder $Q_{6r,(6r)^{2}}(\bar{x}, \bar{t})$. For all of the listed cases, the proof of Proposition~\ref{pr3.1} is provided separately in subsections \ref{subsec.3.1}, \ref{subsec.3.2}, \ref{subsec.3.3}. 

Finally, the main "expansion of positivity" Theorem~\ref{th3.1} is formulated and proved in subsection~\ref{subsec.3.4}. In this theorem we require additional technical assumption on the number $k$:
\begin{equation}\label{eq1.12}
k^{s}\leqslant \gamma_{0}\bigg(\frac{k^{p-2}}{\rho^{n+p}}+ a^{+}_{Q_{6\rho,(6\rho)^{2}}(x_{0}, t_{0})}\frac{k^{q-2}}{\rho^{n+q}}\bigg),
\end{equation}
with some $\gamma_{0}>1$.

The proof of Theorem~\ref{th1.1} is outlined in Section~\ref{Sec4}. First, to be able to apply "expansion of positivity" Theorem~\ref{th3.1}, we need to check assumption \eqref{eq1.12} for $k= \mathcal{I}$. This is done in Lemma \ref{lem4.1}. We also note that in the elliptic case,  inequality \eqref{eq1.12} translates into $k^{s}\leqslant \dfrac{\gamma_{0}}{\rho^{n}}$ and gives us the possibility to use 
 condition ($A 1-s_{*}$) from \cite{BenHarHasKar}, i.e. we can obtain an estimate
\begin{equation*}
\rho^{\alpha+p-q} k^{q-p}\leqslant \gamma_{0}^{\frac{q-p}{s}}\rho^{\alpha+p-q-(q-p)\frac{n}{s}}\leqslant  \gamma_{0}^{\frac{q-p}{s}},
\end{equation*}
provided that $s\geqslant \dfrac{(q-p)n}{\alpha+p-q}$. Inequality \eqref{eq1.12} plays a similar role  in our case. And while verifying inequality \eqref{eq1.12} in the elliptic case is straightforward (see \cite{SavSkrYev1}), in our scenario, some effort will be required.

After we have proved the theorem on the expansion of positivity, we prove the weak Harnack inequality. Following Kuusi \cite{Kuu}  we distinguish two alternative cases : ''hot'' and ''cold''. The ''hot'' case means that super-solution attains large values compared with $\mathcal{I}$ in a relatively large set. To prove the Theorem~\ref{th1.1}, we apply the expansion of positivity theorem but do not use the classical covering argument of Krylov and Safonov \cite{KrySaf}, DiBenedetto and  Trudinger \cite{DiBTru} as it was done in \cite{Kuu}.  Instead, we use the  local clustering lemma due to DiBenedetto,Gianazza and Vespri \cite{DiBGiaVes3}. The proof is provided in subsection~\ref{subsect4.1}.

The ''cold'' case means that the spatial integrals of super-solutions to a small power are estimated by $\mathcal{I}$ at all time levels. By applying  Moser's iterative technique, this case leads to a uniform estimate of  super-solutions to a sufficiently large power.  Finally, we use the expansion of positivity theorem again to complete the proof of the weak Harnack inequality. All the arguments of the proof in ''cold'' case are introduced in subsection~\ref{subsect4.2}

Another challenge encountered in establishing our main result is related to the unboundedness of super-solutions. Consequently, we resort to using averages of $u$ over the cylinders instead of $\sup u$. Additionally, we need to obtain estimates of $\mathcal{I}$ by $\|u\|_{L^s}$.

\section{Auxiliary material and integral estimates }\label{Sec2}

\subsection{ Local clustering Lemma}\label{subsect2.1}

The following  lemma will be used in the sequel, it is  the local clustering lemma, see \cite{DiBGiaVes3}.

\begin{lemma}\label{lem2.1}
Let $K_{r}(y)$ be a cube in $\mathbb{R}^{n}$ of edge $r$ centered at $y$ and let $u\in W^{1,1}(K_{r}(y))$ satisfy
\begin{equation}\label{eq2.1}
||(u-k)_{-}||_{W^{1,1}(K_{r}(y))} \leqslant \mathcal{K}\,k\,r^{n-1},\,\,\,\,\,\,and\,\,\,\,\,\, |\{K_{r}(y) : u\geqslant k \}|\geqslant \beta |K_{r}(y)|,
\end{equation}
with some $\beta \in (0,1)$, $k\in\mathbb{R}^{1}$ and $\mathcal{K} >0$. Then for any $\xi \in (0,1)$ and any $\nu\in (0,1)$ there exists $\bar{x} \in K_{r}(y)$ and $\delta=\delta(n) \in (0,1)$ such that
\begin{equation}\label{eq2.2}
|\{K_{\bar{r}}(\bar{x}):  u\geqslant \xi\,k \}| \geqslant (1-\nu) |K_{\bar{r}}(y)|,\,\,\, \bar{r}:=\delta \beta^{2}\frac{(1-\xi)\nu}{\mathcal{K}}\,r.
\end{equation}
\end{lemma}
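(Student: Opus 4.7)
The plan is to argue by contradiction using a De Giorgi style isoperimetric inequality on subcubes, then summing up against the $W^{1,1}$ bound provided by the hypothesis. Let me denote the target side length $\bar r = \delta\beta^2(1-\xi)\nu\,r/\mathcal{K}$, where $\delta = \delta(n) \in (0,1)$ will be fixed at the end.

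\medskip

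\noindent\textbf{Step 1: Contradiction hypothesis and gridding.} Suppose no $\bar x \in K_r(y)$ satisfies \eqref{eq2.2}. Equivalently, every subcube $K_{\bar r}(\bar x)\subset K_r(y)$ has
\begin{equation*}
|\{K_{\bar r}(\bar x) : u < \xi k\}| > \nu\, |K_{\bar r}(\bar x)|.
\end{equation*}
Partition $K_r(y)$ (up to a null set) into a grid $\{Q_i\}$ of essentially disjoint subcubes of edge $\bar r$; the above inequality applies to every $Q_i$. Now call $Q_i$ \emph{good} if $|\{Q_i : u \geq k\}| \geq \tfrac{\beta}{2}|Q_i|$. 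Since $\sum_i |\{Q_i : u \geq k\}| = |\{K_r(y):u\geq k\}| \geq \beta\,r^n$, while the non-good cubes contribute at most $\tfrac{\beta}{2}r^n$, the total volume of good cubes satisfies
\begin{equation*}
\sum_{i\ \text{good}}|Q_i| \geq \sum_{i\ \text{good}}|\{Q_i : u\geq k\}| \geq \tfrac{\beta}{2}\,r^n.
\end{equation*}

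\medskip

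\noindent\textbf{Step 2: De Giorgi isoperimetric inequality on each good cube.} On any cube $Q$ of side $s$ and any $v\in W^{1,1}(Q)$ with levels $\ell_1 < \ell_2$, the standard isoperimetric inequality reads
\begin{equation*}
(\ell_2-\ell_1)\,|\{v\geq \ell_2\}\cap Q|\;\leq\; \frac{C(n)\,s^{n+1}}{|\{v\leq \ell_1\}\cap Q|}\int_{\{\ell_1<v<\ell_2\}\cap Q}|\nabla v|\,dx.
\end{equation*}
Applying this with $v=u$, $\ell_1=\xi k$, $\ell_2=k$ on each good $Q_i$, and using both the goodness bound $|\{Q_i:u\geq k\}|\geq \tfrac{\beta}{2}|Q_i|$ and the contradiction bound $|\{Q_i:u\leq \xi k\}|\geq \nu|Q_i|$, gives after rearranging
\begin{equation*}
(1-\xi)k\,\tfrac{\beta}{2}\,\nu\,\bar r^{n-1}\;\leq\; C(n)\int_{Q_i\cap\{\xi k<u<k\}}|\nabla u|\,dx.
\end{equation*}

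\medskip

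\noindent\textbf{Step 3: Summation and contradiction.} Summing over all good cubes and observing that on $\{u<k\}$ one has $|\nabla u|=|\nabla(u-k)_-|$, we obtain
\begin{equation*}
(1-\xi)k\,\tfrac{\beta\nu}{2}\,\bar r^{n-1}\cdot N_{\text{good}}\;\leq\; C(n)\int_{K_r(y)}|\nabla(u-k)_-|\,dx \;\leq\; C(n)\,\mathcal{K}\,k\,r^{n-1},
\end{equation*}
where the last step uses the $W^{1,1}$ hypothesis \eqref{eq2.1}. Since the good cubes cover at least $\tfrac{\beta}{2}r^n$, their count satisfies $N_{\text{good}} \geq \tfrac{\beta}{2}(r/\bar r)^n$. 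Substituting and simplifying,
\begin{equation*}
\bar r \;\geq\; \frac{(1-\xi)\beta^2\nu}{4C(n)\,\mathcal{K}}\,r.
\end{equation*}
Choosing $\delta=\delta(n):= 1/(8C(n))$ makes this strictly larger than our $\bar r = \delta\beta^2(1-\xi)\nu\,r/\mathcal{K}$, a contradiction. Hence the sought point $\bar x$ exists.

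\medskip

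\noindent\textbf{Main obstacle.} The key technical ingredient is the De Giorgi isoperimetric inequality in Step 2; everything else is bookkeeping. Minor care is also needed so that $r/\bar r$ need not be an integer (handled by slight enlargement of $K_r$ or by accepting a harmless constant factor), and to match the minor typo in \eqref{eq2.2} where $|K_{\bar r}(y)|$ on the right-hand side should read $|K_{\bar r}(\bar x)|$.
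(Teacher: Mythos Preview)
The paper does not prove this lemma; it merely cites \cite{DiBGiaVes3} (DiBenedetto--Gianazza--Vespri) for the result. Your argument is essentially the standard one found there: partition $K_r(y)$ into a grid of subcubes of side $\bar r$, isolate the ``good'' cubes where $\{u\geq k\}$ occupies at least a $\beta/2$-fraction, apply the De\,Giorgi isoperimetric inequality on each good cube using both the goodness bound and the negated conclusion, then sum and compare with the global $W^{1,1}$ bound to force a contradiction in the choice of $\delta$. The computation in Steps~2--3 is correct, and your own remarks already cover the two cosmetic points (that $r/\bar r$ need not be an integer, and the evident typo $|K_{\bar r}(y)|$ for $|K_{\bar r}(\bar x)|$ in \eqref{eq2.2}).

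One small clarification worth making explicit: the contradiction hypothesis as you stated it (``every subcube $K_{\bar r}(\bar x)\subset K_r(y)$ fails \eqref{eq2.2}'') is formally weaker than the negation of the lemma (which allows $\bar x\in K_r(y)$ without requiring $K_{\bar r}(\bar x)\subset K_r(y)$). This is harmless, since your grid cubes are contained in $K_r(y)$ and hence are covered by the weaker hypothesis; but it is cleaner to phrase the contradiction as ``no grid cube satisfies \eqref{eq2.2}'', which is all you actually use. With that adjustment the proof is complete and matches the approach of the cited reference.
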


\subsection{De Giorgi--Poincare lemma}\label{subsect2.2}
The following lemma is the well-known De Giorgi--Poincare lemma (see for example~\cite{DiBGiaVes3}).
\begin{lemma}\label{lem2.2}
Let $u\in W^{1,1}(B_{r}(y))$ for some $r>0$. Let $k$ and $l$ be real numbers such that
$k < l$. Then there exists a constant $\gamma >0$ depending only on $n$ such that
\begin{equation*}
(l-k) |A_{k,r}| |B_{r}(y)\setminus A_{l,r}| \leqslant \gamma r^{n+1} \int\limits_{A_{l,r}\setminus A_{k,r}}|\nabla u|\, dx,
\end{equation*}
where $A_{k,r} = \{x \in B_{r}(y) : u(x) < k\}.$
\end{lemma}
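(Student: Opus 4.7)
The plan is to reduce the inequality to a segment-integration argument applied to a truncation of $u$. I would introduce the auxiliary function $v := \min\{(u-k)_{+}, l-k\} \in W^{1,1}(B_{r}(y))$, which satisfies $v \equiv 0$ on $A_{k,r}$, $v \equiv l-k$ on $B_{r}(y) \setminus A_{l,r}$, and $\nabla v = \nabla u \cdot \chi_{A_{l,r} \setminus A_{k,r}}$ almost everywhere. Both target sets in the statement are now level sets on which $v$ is constant, so the inequality will follow from an $L^{1}$-type Poincaré estimate for $v$.

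For arbitrary $x \in A_{k,r}$ and $z \in B_{r}(y) \setminus A_{l,r}$, the segment joining them lies in $B_{r}(y)$ by convexity, so the fundamental theorem of calculus (applied to mollifications of $v$ and passed to the limit) yields
\begin{equation*}
l - k \;=\; v(z) - v(x) \;\leqslant\; 2r \int_{0}^{1} \bigl|\nabla v\bigl((1-t)x + tz\bigr)\bigr| \, dt.
\end{equation*}
Multiplying by $\chi_{A_{k,r}}(x)\,\chi_{B_{r}(y) \setminus A_{l,r}}(z)$ and integrating in $(x,z) \in B_{r}(y) \times B_{r}(y)$ produces the desired factor $|A_{k,r}| \cdot |B_{r}(y) \setminus A_{l,r}|$ on the left-hand side and a triple integral on the right.

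The key step is then to split the $t$-integral at $t = 1/2$. On $[0, 1/2]$, with $z$ fixed one uses the change of variables $w = (1-t)x + tz$, so that $dx = (1-t)^{-n}\, dw \leqslant 2^{n}\, dw$; on $[1/2, 1]$ one performs the symmetric substitution with $x$ fixed. This splitting is the crucial trick that avoids the divergence of $\int_{0}^{1}(1-t)^{-n}\,dt$ for $n \geqslant 1$. After the changes of variables, each resulting piece is bounded by a constant (depending only on $n$) times $r\int_{A_{l,r} \setminus A_{k,r}} |\nabla u| \, dw$ multiplied by either $|A_{k,r}|$ or $|B_{r}(y)\setminus A_{l,r}|$, both of which are dominated by $|B_{r}(y)| \leqslant \gamma(n)\, r^{n}$. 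Summing the two contributions gives the claimed inequality with $\gamma = \gamma(n)$.

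The only genuine obstacle in the argument is exactly the Jacobian divergence just mentioned, which forces the symmetric splitting at $t=1/2$; everything else, including the Sobolev regularity of the truncation $v$ and the line-integral representation, is a routine consequence of standard chain-rule and absolute-continuity-on-lines facts for $W^{1,1}$ functions.
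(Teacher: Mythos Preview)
The paper does not supply its own proof of this lemma; it simply states it as the well-known De~Giorgi--Poincar\'e inequality and refers to \cite{DiBGiaVes3}. Your argument is one of the standard proofs of this classical result and is correct: the truncation $v=\min\{(u-k)_{+},\,l-k\}$ localises the gradient to $A_{l,r}\setminus A_{k,r}$, the segment representation is legitimate for a.e.\ pair $(x,z)$ by the absolute-continuity-on-lines characterisation of $W^{1,1}$ (or via mollification, as you indicate), and the symmetric splitting at $t=1/2$ with the Jacobian bound $(1-t)^{-n}\leqslant 2^{n}$ (resp.\ $t^{-n}\leqslant 2^{n}$) is exactly the device that produces a finite constant $\gamma(n)$. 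The final bookkeeping, bounding the surviving factor $|A_{k,r}|$ or $|B_{r}(y)\setminus A_{l,r}|$ by $|B_{r}(y)|\leqslant \gamma(n)r^{n}$, yields the stated $r^{n+1}$ power. Nothing is missing.
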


\subsection{Local energy estimates}\label{subsec2.3}

We refer to the parameters $n$, $p$, $q$, $K_{1}$, $K_{2}$, $A$ and $d$ as our structural data, and we write $\gamma$ if it can be quantitatively determined a priori only in terms of the above quantities. The generic constant $\gamma$ may change from line to line.

\begin{lemma}\label{lem2.3}
 Let $u$ be a weak non-negative super-solution to equation \eqref{eq1.1}, then for  any $Q_{r,\eta}(\bar{x},\bar{t})\subset Q_{r,r}(\bar{x},\bar{t})  \subset Q_{8r, 8r}(\bar{x},\bar{t}) \subset \Omega_{T}$, any $k >0$, any $\sigma \in (0,1)$, any  $\zeta_{1}(x) \in C^{\infty}_{0}(B_{r}(\bar{x}))$,  $0 \leqslant \zeta_{1}(x) \leqslant 1$, $\zeta_{1}(x)=1$ in $B_{r(1-\sigma)}(\bar{x})$, $|\nabla \zeta_{1}(x)| \leqslant (\sigma r)^{-1}$, any  $\zeta_{2}(t)\in C^{1}(\mathbb{R}_{+} ), 0\leqslant \zeta_{2}(t)\leqslant 1$, $\zeta_{2}(t)=1$ for  $t\leqslant \bar{t}+\eta(1-\sigma)$, $\zeta_{2}(t)=0$ for $t\geqslant \bar{t}+\eta$,  $|\frac{d}{dt}\zeta_{2}(t)|\leqslant (\sigma \eta)^{-1}$  next inequalities hold
\begin{multline}\label{eq2.3}
\sup\limits_{\bar{t}<t<\bar{t}+ \eta}\int\limits_{B_{r}(\bar{x})}(u-k)^{2}_{-}\big(\zeta_{1}\zeta_{2}\big)^{q}dx +
\\+\gamma^{-1}\bigg(1+a^{-}_{Q_{r,\eta}(\bar{x},\bar{t})}\bigg(\frac{k}{r}\bigg)^{q-p}\bigg)\iint\limits_{Q^{+}_{r,\eta}(\bar{x}, \bar{t})}|\nabla (u-k)_{-}|^{p}\big(\zeta_{1}\zeta_{2}\big)^{q} dx\,dt \leqslant \\ \leqslant
\gamma \sigma^{-q} \varphi^{+}_{Q_{r,\eta}(\bar{x},\bar{t})}\left(\frac{k}{r}\right)\, \bigg\{1+\frac{k^{2}}{\eta \varphi^{+}_{Q_{r,\eta}(\bar{x},\bar{t})}(\frac{k}{r})}\bigg\} |A^{-}_{k,r,\eta}|,
\end{multline}
\begin{equation}\label{eq2.4}
\sup\limits_{\bar{t}<t<\bar{t}+ \eta}\int\limits_{B_{r}(\bar{x})}(u-k)^{2}_{-}\zeta_{1}^{q}dx \leqslant
\int\limits_{B_{r}(\bar{x})\times\{\bar{t}\}}(u-k)^{2}_{-}\zeta_{1}^{q}dx+ \gamma\,\sigma^{-q}\, \varphi^{+}_{Q_{r,\eta}(\bar{x},\bar{t})}\left(\frac{k}{r}\right)\, |A^{-}_{k,r,\eta}|,
\end{equation}
where $A^{-}_{k,r,\eta}= Q_{r,\eta}(\bar{x},\bar{t})\cap \big\{u \leqslant k\big\}$,  $\varphi^{+}_{Q_{r,\eta}(\bar{x},\bar{t})}\big(\frac{k}{r}\big)=\big(\frac{k}{r}\big)^{p}+ a^{+}_{Q_{r,\eta}(\bar{x},\bar{t})}\big(\frac{k}{r}\big)^{q}$, \\$a^{-}_{Q_{r,\eta}(\bar{x},\bar{t})}=\min\limits_{Q_{r,\eta}(\bar{x},\bar{t})}a(x,t)$, $a^{+}_{Q_{r,\eta}(\bar{x},\bar{t})}=\max\limits_{Q_{r,\eta}(\bar{x},\bar{t})}a(x,t)$.
\end{lemma}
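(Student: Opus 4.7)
The plan is to insert the standard Caccioppoli-type test function $\zeta=(u_{h}-k)_{-}\,\zeta_{1}^{q}\,\zeta_{2}^{q}\geqslant 0$ into the mollified weak formulation \eqref{eq1.4}, perform Young's-inequality bookkeeping adapted to the double-phase structure, and then pass to the limit $h\to 0$ by the usual mollification argument. The parabolic term is handled via the chain rule
\[
\partial_{\tau}u_{h}\,(u_{h}-k)_{-}=-\tfrac{1}{2}\partial_{\tau}\bigl((u_{h}-k)_{-}\bigr)^{2},
\]
so that integration by parts in $\tau$ over $(\bar t,t)$, $t<\bar t+\eta$, produces the sup-term on the left of \eqref{eq2.3}, together with an ``off-diagonal'' contribution $\tfrac{1}{2}\iint ((u-k)_{-})^{2}\zeta_{1}^{q}\,|\partial_{\tau}\zeta_{2}^{q}|\,dx\,d\tau$. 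Using $(u-k)_{-}\leqslant k$ on $A^{-}_{k,r,\eta}$ and $|\partial_{\tau}\zeta_{2}|\leqslant(\sigma\eta)^{-1}$, this term is immediately bounded by $\gamma\sigma^{-q}(k^{2}/\eta)|A^{-}_{k,r,\eta}|$, which accounts for the $k^{2}/\eta$ factor on the right-hand side of \eqref{eq2.3}. For \eqref{eq2.4} one simply takes $\zeta_{2}\equiv 1$, so that the initial-value term at $\tau=\bar t$ stays on the right as written, and no $\partial_{\tau}\zeta_{2}$ contribution arises.

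On $\{u<k\}$ one has $\nabla u=-\nabla(u-k)_{-}$, and the coercivity in \eqref{eq1.2} delivers the principal dissipation
\[
-K_{1}\iint \bigl(|\nabla(u-k)_{-}|^{p}+a\,|\nabla(u-k)_{-}|^{q}\bigr)(\zeta_{1}\zeta_{2})^{q}\,dx\,d\tau
\]
on the left. The remaining ``cross term'' $(u-k)_{-}\,\mathbb{A}\cdot\nabla(\zeta_{1}^{q})\,\zeta_{2}^{q}$ is controlled by the growth bound in \eqref{eq1.2}, $(u-k)_{-}\leqslant k$, and $|\nabla\zeta_{1}^{q}|\leqslant q\zeta_{1}^{q-1}(\sigma r)^{-1}$; splitting it into a $p$-part and a $q$-part and applying Young's inequality with conjugate exponents $(p/(p-1),p)$ and $(q/(q-1),q)$ gives
\[
(u-k)_{-}\,|\mathbb{A}|\,|\nabla\zeta_{1}^{q}|\,\zeta_{2}^{q}\leqslant \varepsilon\bigl(|\nabla(u-k)_{-}|^{p}+a\,|\nabla(u-k)_{-}|^{q}\bigr)(\zeta_{1}\zeta_{2})^{q}+C_{\varepsilon}\sigma^{-q}\bigl((k/r)^{p}+a^{+}(k/r)^{q}\bigr)\mathbf{1}_{\{u<k\}}.
\]
Choosing $\varepsilon$ small enough to reabsorb the first summand into the principal dissipation produces the factor $\varphi^{+}_{Q_{r,\eta}(\bar x,\bar t)}(k/r)\,|A^{-}_{k,r,\eta}|$ on the right-hand side, which is exactly what \eqref{eq2.3}--\eqref{eq2.4} require.

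The genuinely non-standard step, and the one I expect to be the main obstacle, is to convert the leftover dissipation $|\nabla(u-k)_{-}|^{p}+a\,|\nabla(u-k)_{-}|^{q}$ into the form $\bigl(1+a^{-}_{Q}(k/r)^{q-p}\bigr)|\nabla(u-k)_{-}|^{p}$ required on the left of \eqref{eq2.3}. The idea is to apply Young's inequality with exponents $(q/p,\,q/(q-p))$ in the form
\[
a^{-}(k/r)^{q-p}\,|\nabla(u-k)_{-}|^{p}=\bigl[(a^{-})^{p/q}\,|\nabla(u-k)_{-}|^{p}\bigr]\bigl[(a^{-})^{(q-p)/q}(k/r)^{q-p}\bigr]\leqslant \varepsilon\,a\,|\nabla(u-k)_{-}|^{q}+C_{\varepsilon}\,a^{+}(k/r)^{q},
\]
using $a^{-}\leqslant a$ pointwise and $a^{-}\leqslant a^{+}$; condition ($A$) enters only indirectly, guaranteeing that $a^{+}$ and $a^{-}$ are close enough on $Q_{r,\eta}(\bar x,\bar t)$ for the resulting inequality to be useful in the expansion-of-positivity arguments of Section~\ref{Sec3}. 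Rearranging and reabsorbing the first summand back into the $q$-part of the principal dissipation then yields the desired factor $(1+a^{-}(k/r)^{q-p})$ on the left and a clean $\varphi^{+}_{Q}(k/r)\,|A^{-}_{k,r,\eta}|$ contribution on the right.

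Combining these three stages, taking the supremum in $t\in(\bar t,\bar t+\eta)$, and passing $h\to 0$ yields \eqref{eq2.3}. Inequality \eqref{eq2.4} follows from the same scheme with $\zeta_{2}\equiv 1$: only the first two stages are needed, no $k^{2}/\eta$ factor is generated, and the initial-value contribution at $\tau=\bar t$ survives on the right-hand side exactly as stated.
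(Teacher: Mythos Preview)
Your proposal is correct and follows essentially the same approach as the paper's proof: the same test function $(u_{h}-k)_{-}(\zeta_{1}\zeta_{2})^{q}$, the same Young-inequality treatment of the cross term to produce $\varphi^{+}_{Q}(k/r)\,|A^{-}_{k,r,\eta}|$ on the right, and the same Young splitting $a^{-}(k/r)^{q-p}|\nabla(u-k)_{-}|^{p}\leqslant a|\nabla(u-k)_{-}|^{q}+a^{+}(k/r)^{q}$ to manufacture the factor $\bigl(1+a^{-}_{Q}(k/r)^{q-p}\bigr)$ on the left. Your remark that condition~($A$) is not genuinely needed for this lemma (only $a^{-}\leqslant a\leqslant a^{+}$) is also accurate; the oscillation bound only becomes relevant later in Section~\ref{Sec3}.
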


\begin{proof}
Test \eqref{eq1.4} by $(u_{h}-k)_{-}\big(\zeta_{1}\zeta_{2}\big)^{q}$, integrating over $(\bar{t}, \bar{t}+\eta)$,
letting $h\rightarrow 0$, using conditions \eqref{eq1.2}, ($A$)  and the Young inequality we arrive at
\begin{multline*}
\sup\limits_{\bar{t}<t<\bar{t}+ \eta}\int\limits_{B_{r}(\bar{x})}(u-k)^{2}_{-}\big(\zeta_{1}\zeta_{2}\big)^{q}dx +\gamma^{-1}\iint\limits_{Q_{r,\eta}(\bar{x}, \bar{t})}|\nabla (u-k)_{-}|^{p}\big(\zeta_{1}\zeta_{2}\big)^{q} dx\,dt +  \\+\gamma^{-1}\iint\limits_{Q_{r,\eta}(\bar{x}, \bar{t})}a(x,t) |\nabla (u-k)_{-}|^{q}\big(\zeta_{1}\zeta_{2}\big)^{q} dx\,dt  \leqslant \\ \leqslant
\gamma \sigma^{-1} \frac{k^{2}}{\eta} |A^{-}_{k,r,\eta}|+\gamma \sigma^{-q}\iint\limits_{A^{-}_{k,r,\eta}}\varphi\left(x,t,\frac{k}{r}\right) dx\,dt \leqslant \\ \leqslant \gamma \sigma^{-q}\left(\frac{k^{2}}{\eta}+ \varphi^{+}_{Q_{r,\eta}(\bar{x},\bar{\eta})}\left(\frac{k}{r}\right)\right)|A^{-}_{k,r,\eta}|.
\end{multline*}
Using the Young inequality we obtain
\begin{multline*}
\bigg(1+a^{-}_{Q_{r,\eta}(\bar{x},\bar{t})}\bigg(\frac{k}{r}\bigg)^{q-p}\bigg)\iint\limits_{Q_{r,\eta}(\bar{x}, \bar{t})}|\nabla (u-k)_{-}|^{p}\big(\zeta_{1}\zeta_{2}\big)^{q} dx\,dt \leqslant 
\\ \leqslant\iint\limits_{Q_{r,\eta}(\bar{x}, \bar{t})}|\nabla (u-k)_{-}|^{p}\big(\zeta_{1}\zeta_{2}\big)^{q} dx\,dt+\\+
\iint\limits_{Q_{r,\eta}(\bar{x}, \bar{t})}a(x,t)|\nabla (u-k)_{-}|^{q}\big(\zeta_{1}\zeta_{2}\big)^{q} dx\,dt+
a^{+}_{Q_{r,\eta}(\bar{x},\bar{t})}\bigg(\frac{k}{r}\bigg)^{q}|A^{-}_{k,r,\eta}|,
\end{multline*}
from which the required inequality \eqref{eq2.3} follows. Now test \eqref{eq1.4} by $(u_{h}-k)_{-}\zeta_{1}^{q}$,
completely similar to the previous we arrive at \eqref{eq2.4}, this proves the lemma.
\end{proof}

\begin{lemma}\label{lem2.4}
Let $u$ be a weak non-negative super-solution to equation \eqref{eq1.1}, then for  any $Q_{r,\eta}(\bar{x},\bar{t})\subset Q_{r,r^{2}}(\bar{x},\bar{t})  \subset Q_{8r,(8r)^{2}}(\bar{x}, \bar{t}) \subset \Omega_{T}$, any $\delta>0$, any $\varepsilon, \sigma \in (0,1)$,    next inequality holds
\begin{multline}\label{eq2.5}
\frac{1}{1-\varepsilon}\sup\limits_{\bar{t}<t<\bar{t}+ \eta}\int\limits_{B_{r}(\bar{x})}(u+\delta)^{1-\varepsilon}\big(\zeta_{1}\zeta_{2}\big)^{q}dx +\frac{\varepsilon}{\gamma}\iint\limits_{Q_{r,\eta}(\bar{x}, \bar{t})}(u+\delta)^{-\varepsilon -1}|\nabla u|^{p}\big(\zeta_{1}\zeta_{2}\big)^{q} dx\,dt + \\+ \frac{\varepsilon}{\gamma}\iint\limits_{Q_{r,\eta}(\bar{x}, \bar{t})}a(x,t)(u+\delta)^{-\varepsilon -1} |\nabla u|^{q}\big(\zeta_{1}\zeta_{2}\big)^{q} dx\,dt \leqslant \frac{1}{(1-\varepsilon)\sigma \eta}\iint\limits_{Q_{r,\eta}(\bar{x}, \bar{t})}(u+\delta)^{1-\varepsilon}dx\,dt +\\ +\frac{\gamma \varepsilon^{1-p}}{(\sigma r)^{p}}\iint\limits_{Q_{r,\eta}(\bar{x}, \bar{t})}(u+\delta)^{p-\varepsilon -1}dx\,dt + \frac{\gamma \varepsilon^{1-q}}{(\sigma r)^{q}}a^{+}_{Q_{r,\eta}(\bar{x},\bar{t})}\iint\limits_{Q_{r,\eta}(\bar{x}, \bar{t})}(u+\delta)^{q-\varepsilon -1} dx\,dt.
\end{multline}
\end{lemma}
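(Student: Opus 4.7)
The plan is to imitate the proof of Lemma~\ref{lem2.3} but with a Moser-type test function adapted to negative powers. Setting $v:=u+\delta$ (so $v\geqslant\delta>0$), I would test the weak super-solution formulation \eqref{eq1.4} with $\zeta:=v_h^{-\varepsilon}(\zeta_1\zeta_2)^q$, which is admissible for $\varepsilon\in(0,1)$ because $v_h$ is bounded away from zero. A standard truncation $\min(v_h,M)^{-\varepsilon}$ followed by the passages $M\to\infty$, $h\to 0$ would rigorously justify the calculation. Integrating over $(\bar t,\bar t+\eta)$, the parabolic contribution is rewritten via $\partial_t v_h\cdot v_h^{-\varepsilon}=\tfrac{1}{1-\varepsilon}\partial_t(v_h^{1-\varepsilon})$ and integration by parts in $t$, producing the boundary term $\tfrac{1}{1-\varepsilon}\int v^{1-\varepsilon}(\zeta_1\zeta_2)^q\,dx$ evaluated at an intermediate time $t_*\in(\bar t,\bar t+\eta]$ (the end-point at $\bar t+\eta$ vanishes by the choice of $\zeta_2$), together with a remainder involving $\zeta_2'$. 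Using $|\zeta_2'|\leqslant(\sigma\eta)^{-1}$, this remainder is controlled by the first term $\tfrac{1}{(1-\varepsilon)\sigma\eta}\iint v^{1-\varepsilon}\,dx\,dt$ on the right-hand side of \eqref{eq2.5}, and taking the supremum over $t_*$ yields the $\sup$-term on the left.

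Next I would expand $\nabla\zeta=-\varepsilon v^{-\varepsilon-1}\nabla u\,(\zeta_1\zeta_2)^q+qv^{-\varepsilon}(\zeta_1\zeta_2)^{q-1}\zeta_2\nabla\zeta_1$. Combined with the super-solution inequality, the first piece together with the coercivity bound $\mathbb{A}\cdot\nabla u\geqslant K_1(|\nabla u|^p+a|\nabla u|^q)$ produces the two coercive terms $\tfrac{\varepsilon}{\gamma}\iint v^{-\varepsilon-1}|\nabla u|^p(\zeta_1\zeta_2)^q$ and $\tfrac{\varepsilon}{\gamma}\iint a\,v^{-\varepsilon-1}|\nabla u|^q(\zeta_1\zeta_2)^q$ with the correct (good) sign on the left-hand side of \eqref{eq2.5}. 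The growth bound $|\mathbb{A}|\leqslant K_2(|\nabla u|^{p-1}+a|\nabla u|^{q-1})$ then controls the cross piece by $qK_2\iint(|\nabla u|^{p-1}+a|\nabla u|^{q-1})|\nabla\zeta_1|\,v^{-\varepsilon}\zeta_1^{q-1}\zeta_2^q$.

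The decisive algebraic step is to treat these two summands by \emph{two separate} Young inequalities. For the $p$-piece I would use exponents $(p/(p-1),p)$, splitting $v^{-\varepsilon}=v^{-(\varepsilon+1)(p-1)/p}\cdot v^{(p-\varepsilon-1)/p}$ and redistributing the cutoff powers, so that the conjugate factor raised to $p/(p-1)$ reproduces $|\nabla u|^p v^{-\varepsilon-1}(\zeta_1\zeta_2)^q$ (absorbed into the LHS coercive term thanks to the factor $\varepsilon K_1$). For the $q$-piece, the weight is distributed as $a=a^{(q-1)/q}\cdot a^{1/q}$ and the analogous $v$-splitting is combined with exponents $(q/(q-1),q)$. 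The balancing parameter in each Young inequality scales like $\varepsilon^{(p-1)/p}$ and $\varepsilon^{(q-1)/q}$ respectively, giving precisely the powers $\varepsilon^{1-p}$ and $\varepsilon^{1-q}$ stated in \eqref{eq2.5}; together with $|\nabla\zeta_1|\leqslant(\sigma r)^{-1}$ and $a\leqslant a^{+}_{Q_{r,\eta}(\bar x,\bar t)}$ this produces the announced terms $\gamma\varepsilon^{1-p}(\sigma r)^{-p}\iint v^{p-\varepsilon-1}$ and $\gamma\varepsilon^{1-q}(\sigma r)^{-q}a^{+}_{Q_{r,\eta}(\bar x,\bar t)}\iint v^{q-\varepsilon-1}$.

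The main obstacle is primarily technical: rigorously admitting the singular-in-$v$ test function $v_h^{-\varepsilon}$ requires a careful truncation-and-limit argument carried out in tandem with the Steklov-average passage $h\to 0$, and the double-phase structure forces two distinct Young balancings rather than a single one in order that the stated $\varepsilon^{1-p}$ and $\varepsilon^{1-q}$ powers emerge without hidden blow-ups. Once these computations are organised, inequality \eqref{eq2.5} follows by collecting the good-sign terms on the left with the three types of remainders bounded above.
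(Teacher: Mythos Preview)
Your proposal is correct and follows exactly the same approach as the paper: test \eqref{eq1.4} with $(u_h+\delta)^{-\varepsilon}(\zeta_1\zeta_2)^q$, integrate in time, let $h\to 0$, and use the structure conditions \eqref{eq1.2} together with Young's inequality. The paper's proof is in fact just this one-line sketch, so your write-up simply fills in the details that the authors left implicit.
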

\begin{proof}
Test \eqref{eq1.4} by $(u_{h}+\delta)^{-\varepsilon}\big(\zeta_{1}\zeta_{2}\big)^{q}$, integrating over $(\bar{t}, \bar{t}+\eta)$,
letting $h\rightarrow 0$, using conditions \eqref{eq1.2}  and the Young inequality we arrive at the required \eqref{eq2.5}.
\end{proof}

\subsection{De Giorgi type lemma}\label{subsec2.4}
Let $u\in V^{2,m}(\Omega_{T})$, $m>\frac{2n}{n+1}$, $u\geqslant 0$ and let next inequalities hold
\begin{multline}\label{eq2.6}
\sup\limits_{\bar{t}<t<\bar{t}+ \eta}\int\limits_{B_{r}(\bar{x})}(u-k)^{2}_{-}\big(\zeta_{1}\zeta_{2}\big)^{m}dx +
\gamma^{-1}\iint\limits_{Q_{r,\eta}(\bar{x}, \bar{t})}|\nabla (u-k)_{-}|^{m}\big(\zeta_{1}\zeta_{2}\big)^{m} dx\,dt \leqslant\\
\leqslant K \sigma^{-m} \left\{\frac{k^{2}}{\eta} + \bigg(\frac{k}{r}\bigg)^{m} \right\}|A^{-}_{k,r,\eta}|,
\end{multline}
for any $k>0$, any cylinder $Q_{r,\eta}(\bar{x},\bar{t}) \subset Q_{8r,8\eta}(\bar{x},\bar{t})\subset \Omega_{T}$ and any $\sigma\in(0,1)$, and with some $K>0$.
Here $\zeta_{1}$, $\zeta_{2}$ and $A^{-}_{k,r,\eta}$ were defined in Lemma \ref{lem2.3}.

The following lemma is the standard De Giorgi-type lemma (cf. \cite{DiBGiaVes3}, Chapter~3).
\begin{lemma}\label{lem2.6}
Let \eqref{eq2.6} hold, then there exists $\nu\in(0,1)$ depending only on $K$, $n$, $m$ and  $r$, $\eta$ such that if
\begin{equation}\label{eq2.7}
\big|\big\{(x,t)\in Q_{r,\eta}(\bar{x}, \bar{t}) : u(x,t) \leqslant k \big\}\big| \leqslant \nu |Q_{r,\eta}(\bar{x}, \bar{t})|,
\end{equation}
then
\begin{equation}\label{eq2.8}
u(x,t) \geqslant \frac{k}{2},\quad (x,t)\in  Q_{\frac{r}{2},\frac{\eta}{2}}(\bar{x}, \bar{t}).
\end{equation}
\end{lemma}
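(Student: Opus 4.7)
The proof follows the classical De Giorgi iteration, adapted to the power $m$ and the energy form of \eqref{eq2.6}. The plan is to build nested sequences of cylinders and levels converging to $Q_{r/2,\eta/2}(\bar x,\bar t)$ and to $k/2$ respectively, and to derive a fast geometric recursion for the relative measure of the level sets.

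Concretely, for $j=0,1,2,\dots$ I would set
\[
r_j := \tfrac{r}{2}+\tfrac{r}{2^{j+1}},\qquad \eta_j := \tfrac{\eta}{2}+\tfrac{\eta}{2^{j+1}},\qquad k_j := \tfrac{k}{2}+\tfrac{k}{2^{j+1}},
\]
so that $\sigma_j := 1-r_{j+1}/r_j \simeq 2^{-j}$, and take the cylinders $Q_j := Q_{r_j,\eta_j}(\bar x,\bar t)$ with cutoff functions $\zeta_{1,j}$, $\zeta_{2,j}$ as in Lemma~\ref{lem2.3}, equal to $1$ on $Q_{j+1}$. Write $A_j := A^-_{k_j,r_j,\eta_j}$ and $Y_j := |A_j|/|Q_j|$. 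Since $k_j - k_{j+1} = k\,2^{-(j+2)}$, on the set $A_{j+1}$ one has $(u-k_j)_- \geq k\,2^{-(j+2)}$, so Chebyshev gives
\[
|A_{j+1}| \leq \left(\tfrac{2^{j+2}}{k}\right)^{m\frac{n+2}{n}}\iint_{Q_j}(u-k_j)_-^{m\frac{n+2}{n}}(\zeta_{1,j}\zeta_{2,j})^{m\frac{n+2}{n}}\,dx\,dt.
\]

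To estimate the right-hand side I would invoke the parabolic Sobolev embedding (which is why the hypothesis $m>\tfrac{2n}{n+1}$ is needed, to ensure $m\tfrac{n+2}{n}>2$ and that the $L^2$ and $L^m$ bounds in \eqref{eq2.6} combine correctly):
\[
\iint_{Q_j}(u-k_j)_-^{m\frac{n+2}{n}}(\zeta_{1,j}\zeta_{2,j})^{m\frac{n+2}{n}}\,dx\,dt \leq \gamma\Bigl(\sup_{t}\int(u-k_j)_-^{2}(\zeta_{1,j}\zeta_{2,j})^{m}\,dx\Bigr)^{m/n}\iint|\nabla[(u-k_j)_-\zeta_{1,j}\zeta_{2,j}]|^{m}\,dx\,dt.
\]
Both factors on the right are controlled by the energy inequality \eqref{eq2.6} applied on $Q_j$ with $\sigma=\sigma_j$. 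Using $\eta\leq r^2$ (implicit in the standing inclusion $Q_{r,\eta}\subset Q_{r,r^2}$) to bound $k^2/\eta$ against $(k/r)^m \cdot (\text{data})$ when convenient, or simply keeping both terms, one arrives at
\[
Y_{j+1} \leq \gamma\, b^{j}\, Y_j^{\,1+\beta},\qquad \beta := \tfrac{m}{n}>0,
\]
for some constants $\gamma=\gamma(K,n,m,r,\eta)$ and $b>1$ independent of $j$.

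The conclusion then follows from the standard fast geometric convergence lemma (see, e.g., \cite[Chapter~I, Lemma~4.1]{DiB} or \cite{DiBGiaVes3}): if $Y_0\leq \nu := \gamma^{-1/\beta}b^{-1/\beta^{2}}$, then $Y_j\to 0$, which means $u\geq k/2$ a.e.\ on $Q_{r/2,\eta/2}(\bar x,\bar t)$, as asserted in \eqref{eq2.8}. Hypothesis \eqref{eq2.7} is exactly $Y_0\leq\nu$. The main point of care, and the only mildly delicate step, is the correct application of the parabolic Sobolev inequality together with the energy bound \eqref{eq2.6}: one must check that the dependence of $\nu$ on $r$ and $\eta$ is consistent (it enters through the factor $k^2/\eta+(k/r)^m$ on the right of \eqref{eq2.6} and the normalisation by $|Q_j|$). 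Everything else is routine Moser/De Giorgi bookkeeping; no new ideas beyond those already present in \cite[Chapter~3]{DiBGiaVes3} are required.
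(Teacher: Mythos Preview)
Your proposal is correct and follows precisely the standard De Giorgi iteration that the paper invokes; indeed the paper does not prove this lemma at all but simply refers to \cite[Chapter~3]{DiBGiaVes3} and records the resulting value of $\nu$ in \eqref{eq2.9}. Your recursion exponent $1+\beta=1+m/n$ matches the exponent $(n+m)/m$ appearing in \eqref{eq2.9}, and your remark that the dependence on $r,\eta,k$ enters through the factor $k^{2}/\eta+(k/r)^{m}$ is exactly what produces that formula; the only inaccuracy is the aside that $\eta\leqslant r^{2}$ is ``implicit'' here---it is not assumed in the setup of Lemma~\ref{lem2.6}, but as you yourself note one simply keeps both terms and this is harmless.
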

The number $\nu$ is chosen to satisfy
\begin{equation}\label{eq2.9}
\nu:=\frac{1}{\gamma} \frac{r^{m}}{\eta k^{m-2}} \bigg(1+ \frac{\eta k^{m-2}}{r^{m}}\bigg)^{-\frac{n+m}{m}}.
\end{equation}

\section{Expansion of positivity}\label{Sec3}
As it was already mentioned,   we follow \cite{DiBGiaVes3}, using the idea of introducing new unknown functions that depend on the phase.

Fix $(x_{0}, t_{0}) \in \Omega_{T}$ such that $Q_{8\rho, (8\rho)^{2}}(x_{0}, t_{0}) \subset \Omega_{T}$ and let $Q_{8r,(8r)^{2}}(\bar{x},\bar{t})\subset Q_{\rho, \rho^{2}}(x_{0}, t_{0})$.
In what follows, we will assume that  $k>0$ satisfies conditions
\begin{equation}\label{eq3.1}
C_{*} \rho\leqslant k,\quad k^{s} \leqslant \gamma_{0} \frac{\varphi^{+}_{Q_{6\rho,(6\rho)^{2}}(x_{0},t_{0})}(\frac{k}{\rho})}{\rho^{n} k^{2}}= \gamma_{0} \bigg(\frac{k^{p-2}}{\rho^{n+p}}+a^{+}_{Q_{6\rho,(6\rho)^{2}}(x_{0},t_{0})}\frac{k^{q-2}}{\rho^{n+q}}\bigg),
\end{equation}
where $\gamma_{0}>1$ is some fixed  number depending only on the data and $C_{*} >1$ depends only on the known data to be chosen later.

First, we will prove the following result.
\begin{proposition}\label{pr3.1}
Let $u$ be a weak non-negative super-solution to equation \eqref{eq1.1}, let $k$ satisfy  \eqref{eq3.1}
and let also
\begin{equation}\label{eq3.2}
\big|\big\{B_{r}(\bar{x}): u(\cdot, \bar{t}) > k \big\}\big|\geqslant \beta_{0} |B_{r}(\bar{x})|,
\end{equation}
with some $\beta_{0} \in (0,1)$. Then there exist numbers $C_{*}$, $b_{1}$, $b_{2}>0$ and $\varepsilon_{0}$, $\sigma_{0}\in (0,1)$ depending only on the data and $\beta_{0}$ such that
\begin{equation}\label{eq3.3}
u(x,t) \geqslant \sigma_{0} k, \quad x\in B_{2r}(\bar{x}),
\end{equation}
for all
\begin{equation}\label{eq3.4}
\bar{t}+\eta_{1}:=\bar{t}+b_{1}\frac{(\sigma_{0}k)^{2}}{\varphi^{+}_{Q_{6r,(6r)^{2}}(\bar{x},\bar{t})}\big(\frac{\sigma_{0}k}{r}\big)}\leqslant t \leqslant \bar{t}+
b_{2}\frac{(\sigma_{0}k)^{2}}{\varphi^{+}_{Q_{6r,(6r)^{2}}(\bar{x},\bar{t})}\big(\frac{\sigma_{0}k}{r}\big)}:=\bar{t}+\eta_{2}.
\end{equation}
\end{proposition}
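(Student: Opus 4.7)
My plan is to adapt the DiBenedetto--Gianazza--Vespri expansion-of-positivity scheme \cite{DiBGiaVes3} to the double-phase setting, taking advantage of the phase dichotomy (\ref{eq1.10})--(\ref{eq1.11}). I would first fix $\bar{\varepsilon}\in(0,1)$ small and split into the three combinations of phases listed in the introduction (the cylinders $Q_{6\rho,(6\rho)^{2}}(x_{0},t_{0})$ and $Q_{6r,(6r)^{2}}(\bar{x},\bar{t})$ can independently be in $p$-phase or in $(p,q)$-phase). Within each subcase I would follow three conceptual stages.

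\textbf{Stage 1 (time propagation on the same ball).} Appeal to the forthcoming Lemma~\ref{lem3.1} to upgrade the single-time hypothesis (\ref{eq3.2}) to the statement that, for some $\varepsilon_{1}\in(0,1)$ depending only on the data and $\beta_{0}$,
\[
|\{x\in B_{r}(\bar{x}):u(x,t)>\varepsilon_{1} k\}|\geqslant \tfrac{\beta_{0}}{2}|B_{r}(\bar{x})|\qquad\text{for all }t\in[\bar{t},\bar{t}+\eta_{2}].
\]
It is precisely condition (\ref{eq3.1}) on $k$ that makes the intrinsic time scale in (\ref{eq3.4}) compatible with the energy estimate of Lemma~\ref{lem2.3}, so that Lemma~\ref{lem3.1} covers the full interval.

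\textbf{Stage 2 (shrinking the negativity set in space).} Fixing $t\in[\bar{t}+\eta_{1},\bar{t}+\eta_{2}]$, apply the De~Giorgi--Poincar\'e Lemma~\ref{lem2.2} along the geometric levels $k_{j}=\varepsilon_{1} k/2^{j}$ on $B_{2r}(\bar{x})$: the second factor on its left-hand side is controlled from below by the measure bound from Stage~1, so that
\[
|A^{-}_{k_{j+1},2r}(t)|\leqslant \frac{\gamma(\beta_{0})\,r}{k_{j+1}}\int_{B_{2r}\cap\{k_{j+1}<u<k_{j}\}}|\nabla u|\,dx.
\]
Integrating in $t$, using H\"older in space-time, and substituting the energy estimate (\ref{eq2.3}) (with its weighted $L^{q}$ contribution retained whenever the active phase is $(p,q)$) yields a telescoping recursion that shows $|\{Q_{2r,\eta_{2}-\eta_{1}}(\bar{x},\bar{t}+\eta_{1}):u<\sigma_{0} k\}|$ can be made as small as desired by taking $\sigma_{0}$ sufficiently small.

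\textbf{Stage 3 (De Giorgi iteration).} Once the smallness set has relative measure below the $\nu$ of Lemma~\ref{lem2.6}, computed from (\ref{eq2.9}) with the exponent $m$ dictated by the active phase ($m=p$ in the $p$-phase, effectively $m=q$ in the $(p,q)$-phase after absorbing $a^{+}$ into the scaling), the pointwise lower bound $u\geqslant \sigma_{0} k/2$ on the sub-cylinder follows, and a standard covering over $B_{2r}(\bar{x})\times[\bar{t}+\eta_{1},\bar{t}+\eta_{2}]$ delivers (\ref{eq3.3}).

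\textbf{Main obstacle.} The real difficulty is the phase-sensitive bookkeeping. The time scale in (\ref{eq3.4}) is dictated by $Q_{6r,(6r)^{2}}(\bar{x},\bar{t})$, whereas condition (\ref{eq3.1}) on $k$ involves the larger cylinder $Q_{6\rho,(6\rho)^{2}}(x_{0},t_{0})$, and these two cylinders may lie in different phases. As flagged in the introduction, the hybrid case in which the large cylinder is in the $p$-phase while the small one is in the $(p,q)$-phase is the most delicate: neither pure $p$-Laplace nor weighted $q$-Laplace intrinsic geometry fits cleanly, and condition~($A$) on $\operatorname{osc} a$ must be invoked to interpolate $a^{+}_{Q_{6r,(6r)^{2}}}$ against $a^{+}_{Q_{6\rho,(6\rho)^{2}}}$ before the energy estimate and De~Giorgi iteration can be closed. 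This is exactly the reason the argument is parceled into Subsections \ref{subsec.3.1}--\ref{subsec.3.3}.
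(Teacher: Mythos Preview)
Your overall architecture (time propagation $\to$ shrinking via De Giorgi--Poincar\'e $\to$ De Giorgi iteration) matches the paper's, and you correctly identify the phase dichotomy and the delicate hybrid case. But Stage~1 as you state it is too strong, and this is not a cosmetic issue---it is exactly the obstruction that forces the paper's key device, which your outline omits.

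Lemma~\ref{lem3.1} does \emph{not} give a fixed-level measure bound on the whole interval $[\bar t,\bar t+\eta_{2}]$. It gives, for each $\tau\in[0,\log C_{*}]$, the bound $|\{u(\cdot,t)\geqslant \varepsilon e^{-\tau}k\}|\geqslant\tfrac{\beta_{0}}{4}|B_{r}|$ only for $t\leqslant \bar t+\delta(ke^{-\tau})^{2}/\varphi^{+}(ke^{-\tau}/r)$. Since $p>2$, reaching later times forces the level to decay exponentially. The interval $[\bar t,\bar t+\eta_{2}]$ in (\ref{eq3.4}) is determined by the \emph{small} level $\sigma_{0}k$, hence is much longer than the interval you obtain at level $\varepsilon_{1}k$. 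If you try to run Stage~2 directly on $u$ with levels $k_{j}=\varepsilon_{1}k/2^{j}$ over this long interval, the time term $k_{j}^{2}/\eta$ in the energy estimate (\ref{eq2.3}) produces a factor $(k_{j}/k)^{2-p}\sim 2^{j(p-2)}$, so the telescoping recursion does not close.

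The paper resolves this by introducing, separately in each phase, the transformed unknown
\[
w(y,\tau)=\frac{e^{\tau}}{k}\,u\!\left(\bar x+ry,\ \bar t+c\,r^{m}\Big(\frac{e^{\tau}}{k}\Big)^{m-2}\right),
\]
with $m=p$ in the $p$-phase and $m=q$ (together with the weight $a^{+}$) in the $(p,q)$-phase. This exponential reparametrisation of time is precisely what converts the $\tau$-dependent level $\varepsilon e^{-\tau}k$ of Lemma~\ref{lem3.1} into the \emph{fixed} level $\varepsilon$ for $w$, valid for all $\tau$ in a unit-scale interval; the transformed equation (\ref{eq3.10})/(\ref{eq3.19}) then yields an energy estimate for $w$ with balanced constants (Lemmas~\ref{lem3.2},~\ref{lem3.3}), and only afterwards is Lemma~\ref{lem2.6} applied. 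Without this change of variables your Stages~1--2 do not interlock; the phase bookkeeping you flag is a second-order difficulty compared to this.
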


\begin{lemma}\label{lem3.1}
Let conditions of Proposition \ref{pr3.1} hold, then there exist $\varepsilon, \delta \in(0,1)$, depending only on the data and $\beta_{0}$ such that 
\begin{equation}\label{eq3.5}
\big|\big\{B_{r}(\bar{x}) : u(\cdot, t) \geqslant \varepsilon e^{-\tau} k\big\}\big| \geqslant \frac{\beta_{0}}{4} |B_{r}(\bar{x})|,
\end{equation}
for any $0 \leqslant \tau\leqslant \log C_{*}$ and for any $\bar{t} <t \leqslant \bar{t}+ \dfrac{\delta \big(k e^{-\tau}\big)^{2}}{\varphi^{+}_{Q_{6r,(6r)^{2}}(\bar{x},\bar{t})}(\frac{k e^{-\tau}}{r})}$. Here $C_{*}$ is the constant from inequality \eqref{eq3.1}.
\end{lemma}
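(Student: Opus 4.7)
The plan is to prove Lemma~\ref{lem3.1} by a propagation-of-positivity-in-time argument based on a logarithmic-type energy estimate for super-solutions, adapting the scheme of DiBenedetto--Gianazza--Vespri to the double-phase structure \eqref{eq1.2}. Fix $c\in(0,1)$ (to be chosen), set
\begin{equation*}
\Psi(v):=\bigl[\log\bigl(k/(v+ck)\bigr)\bigr]_{+},\qquad -\Psi(v)\Psi'(v)=\frac{\Psi(v)}{v+ck}\chi_{\{v+ck<k\}}\geqslant 0,
\end{equation*}
and let $\zeta_1\in C_0^{\infty}(B_r(\bar{x}))$ be a spatial cutoff with $\zeta_1\equiv 1$ on $B_{r(1-\sigma)}(\bar{x})$ and $|\nabla\zeta_1|\leqslant(\sigma r)^{-1}$. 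Testing \eqref{eq1.4} with the non-negative function $-\Psi(u_h)\Psi'(u_h)\zeta_1^{q}$, integrating on $(\bar{t},t)$ for $t\leqslant\bar{t}+\eta_\tau$, sending $h\to 0$, using the structure conditions \eqref{eq1.2}, condition $(A)$, and Young's inequality to absorb the gradient contributions (while discarding the non-negative diffusive term containing $(\Psi')^{2}|\nabla u|^{p}$), I expect an inequality of the form
\begin{equation*}
\sup_{\bar{t}<s<t}\int_{B_{r(1-\sigma)}(\bar{x})}\Psi^{2}(u(\cdot,s))\,dx\leqslant \int_{B_r(\bar{x})\times\{\bar{t}\}}\Psi^{2}(u)\,dx+\gamma\sigma^{-q}\varphi^{+}_{Q_{6r,(6r)^2}(\bar{x},\bar{t})}(k/r)\,\frac{\eta_\tau}{k^{2}}\,|B_r(\bar{x})|.
\end{equation*}

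Next, the initial term is controlled using the positivity hypothesis \eqref{eq3.2}: since $\Psi\equiv 0$ on $\{u(\bar{t})\geqslant k\}$ and $\Psi\leqslant\log(1/c)$ everywhere, one obtains $\int_{B_r(\bar{x})\times\{\bar{t}\}}\Psi^{2}(u)\,dx\leqslant[\log(1/c)]^{2}(1-\beta_0)|B_r(\bar{x})|$. On the set $\{B_{r(1-\sigma)}(\bar{x}):u(\cdot,t)<\varepsilon e^{-\tau}k\}$ the log is bounded below by $\log(1/(\varepsilon e^{-\tau}+c))\geqslant\log(1/(\varepsilon+c))$, and this is $\geqslant\alpha\log(1/c)$ uniformly in $\tau\in[0,\log C_*]$ provided $\varepsilon+c\leqslant c^{\alpha}$. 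Assembling these bounds, dividing by $[\alpha\log(1/c)]^{2}$, selecting $\alpha^{2}\in\bigl((1-\beta_0)/(1-\beta_0/4),1\bigr)$ (possible since $\beta_0>0$), then $c$ small with $\varepsilon:=c^{\alpha}-c$, a small $\sigma$ to absorb the annulus $B_r(\bar{x})\setminus B_{r(1-\sigma)}(\bar{x})$, and finally $\delta$ small so that the error term $\gamma\sigma^{-q}\varphi^{+}_{Q_{6r,(6r)^2}(\bar{x},\bar{t})}(k/r)\,\eta_\tau[\log(1/c)]^{-2}/k^{2}$ is at most $\beta_0/8$, one concludes $|\{B_r(\bar{x}):u(\cdot,t)\geqslant\varepsilon e^{-\tau}k\}|\geqslant\frac{\beta_0}{4}|B_r(\bar{x})|$.

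The main obstacle is the non-trivial uniformity in $\tau$: the prescribed time window $\eta_\tau=\delta(ke^{-\tau})^{2}/\varphi^{+}_{Q_{6r,(6r)^{2}}(\bar{x},\bar{t})}(ke^{-\tau}/r)$ scales with $k_\tau=ke^{-\tau}$, whereas the log-estimate at level $k$ carries a natural window $\sim k^{2}/\varphi^{+}(k/r)$. In the $p$-phase the ratio $\eta_\tau\varphi^{+}(k/r)/k^{2}$ grows like $(k_\tau/k)^{2-p}$, and in the $(p,q)$-phase it can grow like $(k_\tau/k)^{2-q}$; for $p,q>2$ and $\tau$ up to $\log C_*$ both factors can be as large as $(C_*/\varepsilon)^{q-2}$. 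Consequently, $\delta$ must depend quantitatively on $C_*$, $\varepsilon$, and the gap $q-p$, all of which are controlled by the structural data and $\beta_0$; verifying the resulting estimate uniformly across the $p$- and $(p,q)$-phase regimes, using the structural bound \eqref{eq3.1} to keep the dependence on $k$ and $\rho$ compatible, constitutes the most delicate part of the computation.
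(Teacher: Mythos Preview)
Your logarithmic approach can in principle be made to work, but you are creating an artificial difficulty by anchoring the estimate at level $k$ rather than at level $ke^{-\tau}$. The paper's proof is far more direct: since $\{u(\cdot,\bar t)>k\}\subset\{u(\cdot,\bar t)>ke^{-\tau}\}$, the hypothesis \eqref{eq3.2} immediately gives $|\{B_r(\bar x):u(\cdot,\bar t)>ke^{-\tau}\}|\geqslant\beta_0|B_r(\bar x)|$, and one simply applies the $L^2$ energy estimate \eqref{eq2.4} at the level $ke^{-\tau}$ with the matching window $\eta=\eta_\tau$. The error term is then $\gamma\sigma^{-q}\eta_\tau\,\varphi^{+}_{Q_{r,\eta_\tau}(\bar x,\bar t)}(ke^{-\tau}/r)\leqslant\gamma\sigma^{-q}\delta(ke^{-\tau})^{2}$, which scales exactly with $(ke^{-\tau})^{2}$ and produces a $\delta$ depending only on $\beta_0$, $n$, $q$ and the structural constants---with no dependence on $C_*$ whatsoever. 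The only role of the restriction $\tau\leqslant\log C_*$ is to guarantee $ke^{-\tau}\geqslant r$, hence $\eta_\tau\leqslant r^{2}$, so that $\varphi^{+}_{Q_{r,\eta_\tau}}\leqslant\varphi^{+}_{Q_{6r,(6r)^{2}}}$.

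By contrast, your plan forces the error term to carry the factor $\eta_\tau\,\varphi^{+}(k/r)/k^{2}$, which blows up like $e^{(q-2)\tau}\leqslant C_*^{\,q-2}$; you would then need $\delta$ to absorb this. While not strictly circular (in the paper $C_*$ is fixed only through $\varepsilon$ and $j_*$, not through $\delta$), it is a genuine complication, and your last paragraph does not actually carry out the required bound---it merely asserts that it should work. Your reference to the structural inequality \eqref{eq3.1} is also misplaced: that inequality lives at scale $\rho$, not $r$, and plays no role in Lemma~\ref{lem3.1}. The cleanest fix within your own framework would be to run the logarithmic estimate with $\Psi_\tau(v)=[\log(ke^{-\tau}/(v+cke^{-\tau}))]_+$ instead of $\Psi$; then the scales match and the difficulty you flag disappears---at which point the log machinery is no longer needed and the paper's elementary $L^{2}$ argument via \eqref{eq2.4} suffices.
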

\begin{proof}
Use inequality \eqref{eq2.4} with $k$ replaced by $k e^{-\tau}$, $\eta= \dfrac{\delta \big(k e^{-\tau}\big)^{2}}{\varphi^{+}_{Q_{6r,(6r)^{2}}(\bar{x},\bar{t})}(\frac{k e^{-\tau}}{r})}$. We need  to check the inequality  
$$\varphi^{+}_{Q_{r,\eta}(\bar{x},\bar{t})}\Big(\dfrac{k e^{-\tau}}{r}\Big)\leqslant \varphi^{+}_{Q_{r,r^{2}}(\bar{x},\bar{t})}\Big(\dfrac{k e^{-\tau}}{r}\Big),$$ 
which holds if $\eta \leqslant r^{2}$. We have $\eta\leqslant r^{p}(ke^{-\tau})^{2-p}\leqslant r^{2},$ provided that $ke^{-\tau}\geqslant r$.
By the first inequality in \eqref{eq3.1} this inequality is valid if $e^{\tau}\leqslant C_{*}$.

So, by \eqref{eq3.2} we obtain
\begin{multline*}
\sup\limits_{\bar{t}<t<\bar{t}+\eta}\int\limits_{B_{r(1-\sigma)}(\bar{x})}(u-ke^{-\tau})^{2}_{-}\,dx\leqslant
\int\limits_{B_{r}(\bar{x})\times\{\bar{t}\}}(u-ke^{-\tau})^{2}_{-}\,dx+ \gamma \sigma^{-q}\,\eta \varphi^{+}_{Q_{r,\eta}(\bar{x},\bar{t})}\Big(\frac{k e^{-\tau}}{r}\Big)|B_{r}(\bar{x})|\leqslant\\ \leqslant \big(1-\beta_{0}+\gamma \sigma^{-q} \delta \big)\big(k e^{-\tau}\big)^{2}|B_{r}(\bar{x})|,
\end{multline*}
Therefore, for any $\bar{t}<t<\bar{t}+ \eta$
\begin{equation*}
\big|\big\{B_{r}(\bar{x}) : u(\cdot, t) \leqslant \varepsilon k e^{-\tau} \big\}\big| \leqslant n\sigma|B_{r}(\bar{x})| +\frac{1}{(1-\varepsilon)^{2}}\big\{1-\beta_{0}+\gamma \sigma^{-q} \delta \big\} |B_{r}(\bar{x})|,
\end{equation*}
provided that $0<\tau \leqslant \log C_{*}$.

Choose
\begin{equation}\label{eq3.6}
\sigma=\frac{\beta_{0}}{8 n},\quad \frac{\beta_{0}}{8}\leqslant \varepsilon=1-\frac{(1-\frac{3}{4}\beta_{0})^{\frac{1}{2}}}{(1-\frac{1}{2}\beta_{0})^{\frac{1}{2}}}\leqslant \frac{\beta_{0}}{2},\quad \delta=\frac{\beta_{0}^{q+1}}{32 (8n)^{q}\gamma},
\end{equation}
we arrive at the required \eqref{eq3.5}, which proves the lemma.
\end{proof}
In the proof of Proposition \ref{pr3.1} we will distinguish between two different cases. Fix $j_{*} >1$, which will be chosen later depending only on the data and $\beta_{0}$ and set $\tau_{*}= \Big(\dfrac{2^{j_{*}}}{\varepsilon}\Big)^{q-2}$. The first one is the so-called case of $p$-phase
\begin{equation}\label{eq3.7}
a^{+}_{Q_{6\rho,(6\rho)^{2}}(x_{0},t_{0})}\bigg(\frac{\bar{\varepsilon} k}{\rho}\bigg)^{q-p} \leqslant 1,\quad \bar{\varepsilon}=\varepsilon 2^{-j_{*}} e^{-\tau_{*}}
\end{equation}
and the second is the case of ($p,q$)-phase
\begin{equation}\label{eq3.8}
a^{+}_{Q_{6\rho,(6\rho)^{2}}(x_{0},t_{0})}\bigg(\frac{\bar{\varepsilon} k}{\rho}\bigg)^{q-p} \geqslant 1.
\end{equation}
In turn, we divide case \eqref{eq3.7} into subcases.  We will assume that either
\begin{itemize}
\item[($i$)]
\begin{equation*}
a^{+}_{Q_{6r,(6r)^{2}}(\bar{x},\bar{t})}\,\bigg(\frac{\bar{\varepsilon} k}{r}\bigg)^{q-p} \leqslant 1\quad \text{and}\quad a^{+}_{Q_{6\rho,(6\rho)^{2}}(x_{0},t_{0})}\bigg(\frac{\bar{\varepsilon} k}{\rho}\bigg)^{q-p} \leqslant 1,
\end{equation*}
\end{itemize}
or
\begin{itemize}
\item[($ii$)]
\begin{equation*}
a^{+}_{Q_{6r,(6r)^{2}}(\bar{x},\bar{t})}\,\bigg(\frac{\bar{\varepsilon} k}{r}\bigg)^{q-p} \geqslant 1 \quad \text{and}\quad a^{+}_{Q_{6\rho,(6\rho)^{2}}(x_{0},t_{0})}\bigg(\frac{\bar{\varepsilon} k}{\rho}\bigg)^{q-p} \leqslant 1.
\end{equation*}
\end{itemize}

\subsection{Proof of Proposition \ref{pr3.1} in the case  ($i$)}\label{subsec.3.1}
For $\tau \geqslant \bar{\tau}_{*}:=\tau_{*}+ j_{*}\log2 +\log\dfrac{1}{\varepsilon}$ we have
\begin{equation*}
a^{+}_{Q_{6r,(6r)^{2}}(\bar{x},\bar{t})}\,\bigg(\frac{k}{r e^{\tau}}\bigg)^{q-p}\leqslant  a^{+}_{Q_{6r,(6r)^{2}}(\bar{x},\bar{t})}\bigg(\frac{\varepsilon k}{r 2^{j_{*}} e^{\tau_{*}}}\bigg)^{q-p}= a^{+}_{Q_{6r,(6r)^{2}}(\bar{x},\bar{t})}\,\bigg(\frac{\bar{\varepsilon} k}{r}\bigg)^{q-p} \leqslant 1,
\end{equation*}
therefore, $\varphi^{+}_{Q_{6r,(6r)^{2}}(\bar{x},\bar{t})}\bigg(\dfrac{ k}{r e^{\tau}}\bigg) \leqslant 2\bigg(\dfrac{ k}{r e^{\tau}}\bigg)^{p}$ for $\tau \geqslant \bar{\tau}_{*}$ and  inequality \eqref{eq3.5} yields
\begin{equation*}
\left|\left\{B_{r}(\bar{x}) : u\left(\cdot, \bar{t}+\frac{\delta}{2}r^{p}\bigg(\frac{e^{\tau}}{k}\bigg)^{p-2}\right) \geqslant  e^{-\tau}\varepsilon  k\right\}\right| \geqslant \frac{\beta_{0}}{4} |B_{r}(\bar{x})|,\quad \text{for all}\quad \tau \geqslant\bar{\tau}_{*}.
\end{equation*}
Following \cite{DiBGiaVes3} we consider the function
$$w(y,\tau):=\frac{e^{\tau}}{k} u\left(\bar{x} +r y, \bar{t}+ \frac{\delta}{2}r^{p}\bigg(\frac{e^{\tau}}{k}\bigg)^{p-2}\right),\quad \tau \geqslant \bar{\tau}_{*}.$$
The previous inequality  translates into $w$ as $\big|\big\{B_{1}(0) : w(\cdot, \tau) \geqslant \varepsilon \big\}\big| \geqslant \dfrac{\beta_{0}}{4} |B_{1}(0)|$, which yields
\begin{equation}\label{eq3.9}
\big|\big\{B_{4}(0) : w(\cdot, \tau) \geqslant \varepsilon \big\}\big| \geqslant \dfrac{\beta_{0}}{ 4^{n+1}} |B_{4}(0)|, \quad \text{for all}\quad \tau \geqslant \bar{\tau}_{*}.
\end{equation}
Since $w \geqslant 0$, formal differentiation, which can be justified in a standard way, gives
\begin{equation}\label{eq3.10}
w_{\tau}= w + \frac{\delta}{2} (p-2) r^{p}\bigg(\frac{e^{\tau}}{k}\bigg)^{p-1} u_{t} \geqslant div\, \bar{\mathbb{A}}(y,\tau,\nabla w),
\end{equation}
where $\bar{\mathbb{A}}$ satisfies the inequalities
\begin{equation}\label{eq3.11}
\begin{aligned}
\ \bar{\mathbb{A}}(y,\tau,\nabla w) \nabla w \geqslant K_{1} \frac{\delta}{2} (p-2)\left(|\nabla w|^{p} + \bar{a}(y,\tau)\bigg(\frac{k}{e^{\tau}r}\bigg)^{q-p}|\nabla w|^{q}\right),
\\
\ |\bar{\mathbb{A}}(y,\tau,\nabla w)| \leqslant K_{2} \frac{\delta}{2} (p-2)\left(|\nabla w|^{p-1} + \bar{a}(y,\tau)\bigg(\frac{k}{e^{\tau}r}\bigg)^{q-p}|\nabla w|^{q-1}\right),
\end{aligned}
\end{equation}
and $\bar{a}(y,\tau):= a\big(\bar{x} +r y, \bar{t}+ \frac{\delta}{2}r^{p}\big(\frac{e^{\tau}}{k}\big)^{p-2}\big).$

\begin{lemma}\label{lem3.2}
For any $\nu\in (0,1)$ there exists $j_{*}$, depending only on the data, $\beta_{0}$ and $\nu$, such that
\begin{equation}\label{eq3.12}
\left|\left\{Q^{*}: w \leqslant \frac{\varepsilon}{2^{j_{*}}} \right\}\right| \leqslant \nu |Q^{*}|,
\end{equation}
$Q^{*}= B_{4}(0)\times (\bar{\tau}_{*}+ \frac{1}{2}\big(\frac{2^{j_{*}}}{\varepsilon}\big)^{p-2}, \bar{\tau}_{*}+ \big(\frac{2^{j_{*}}}{\varepsilon}\big)^{p-2}).$
\end{lemma}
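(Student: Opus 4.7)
The claim is a De Giorgi--type shrinking-of-measure statement for the scaled super-solution $w$, and the plan is to iterate the local Caccioppoli inequality for $(w-k_j)_-$ over the dyadic sequence $k_j:=\varepsilon\,2^{-j}$, $j=0,1,\dots,j_*$, combining it with the time-slice measure information \eqref{eq3.9} via the De Giorgi--Poincar\'e lemma (Lemma~\ref{lem2.2}) applied at each time slice.

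First I would transfer Lemma~\ref{lem2.3} to the rescaled super-solution $w$ satisfying \eqref{eq3.10}--\eqref{eq3.11} on the enlarged cylinder $\widetilde Q:=B_8(0)\times(\bar\tau_*,\bar\tau_*+(2^{j_*}/\varepsilon)^{p-2})\supset Q^*$. In case~($i$) the coefficient $\bar a(y,\tau)\big(k/(e^{\tau}r)\big)^{q-p}\le 1$ uniformly for $\tau\ge\bar\tau_*$, so the $q$-phase part of \eqref{eq3.11} is dominated by a bounded multiple of the $p$-phase and the resulting Caccioppoli estimate for $(w-k_j)_-$ takes the $p$-Laplacian form
$$
\iint_{Q^*}\bigl|\nabla(w-k_j)_-\bigr|^p\,dy\,d\tau \le C\,k_j^{p}\,|\widetilde Q\cap\{w<k_j\}|,
$$
where the intrinsic time length $(2^{j_*}/\varepsilon)^{p-2}\simeq k_{j_*}^{2-p}$ is precisely the scale that balances the two terms on the right-hand side of \eqref{eq2.3}.

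Second, writing $A_j(\tau):=\{y\in B_4(0):w(y,\tau)<k_j\}$ and applying Lemma~\ref{lem2.2} at each $\tau$ with $k=k_{j+1}$, $l=k_j$, the assumption \eqref{eq3.9} yields $|B_4\setminus A_j(\tau)|\ge|B_4\setminus A_\varepsilon(\tau)|\ge\tfrac{\beta_0}{4^{n+1}}|B_4|$, so that
$$
\frac{\beta_0}{4^{n+1}}\,k_{j+1}|B_4|\cdot|A_{j+1}(\tau)|\le C\int_{A_j(\tau)\setminus A_{j+1}(\tau)}|\nabla w(\cdot,\tau)|\,dy.
$$
A double H\"older inequality (first in $y$, then in $\tau$) together with the Caccioppoli bound produces the Stampacchia-type recursion
$$
|Q^*\cap\{w<k_{j+1}\}|^{p/(p-1)}\le C\,\bigl(|Q^*\cap\{w<k_j\}|-|Q^*\cap\{w<k_{j+1}\}|\bigr)\,|\widetilde Q|^{1/(p-1)}.
$$
Setting $y_j:=|Q^*\cap\{w<k_j\}|$ and using $y_{j_*}\le y_{j+1}$, summing the recursion over $j=0,\dots,j_*-1$ telescopes to $j_*\,y_{j_*}^{p/(p-1)}\le C\,|\widetilde Q|^{p/(p-1)}$, whence $y_{j_*}\le C\,|Q^*|\,j_*^{-(p-1)/p}$, and it remains to choose $j_*$ large enough that this bound is at most $\nu|Q^*|$.

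The main obstacle is to derive the rescaled Caccioppoli estimate honestly in the presence of the full double-phase operator \eqref{eq3.11}. The key point is that $\bar a(y,\tau)(k/(e^{\tau}r))^{q-p}\le 1$ for every $\tau\ge\bar\tau_*$, which is precisely where the case~($i$) hypothesis and the lower time cut-off $\tau\ge\bar\tau_*$ enter; without it the $q$-contribution to \eqref{eq3.11} would be unbounded and the iteration would fail. A secondary difficulty, handled as in \cite{DiBGiaVes3}, is to absorb the initial-slice term arising from the time-integration by parts; this is exactly why $Q^*$ is placed in the upper half of $\widetilde Q$, so that the distance $\tfrac12(2^{j_*}/\varepsilon)^{p-2}$ from the initial time $\bar\tau_*$ is large enough to render that term harmless.
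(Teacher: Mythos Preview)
Your proposal is correct and follows essentially the same approach as the paper's proof: slice-wise application of the De Giorgi--Poincar\'e lemma via \eqref{eq3.9}, integration in $\tau$ with H\"older, a $p$-Laplacian-type Caccioppoli estimate for the rescaled equation \eqref{eq3.10}--\eqref{eq3.11} exploiting case~($i$), and the telescoping sum giving $|A_{j_*}|\le \gamma(\beta_0)\,j_*^{-(p-1)/p}|Q^*|$. The only point you leave implicit, which the paper spells out, is that bounding $\bar a(y,\tau)$ by $a^+_{Q_{6r,(6r)^2}(\bar x,\bar t)}$ requires the time coordinate $\bar t+\tfrac{\delta}{2}r^p(e^\tau/k)^{p-2}$ to remain in the original cylinder for $\tau$ up to $\bar\tau_*+2(2^{j_*}/\varepsilon)^{p-2}$; this is where the lower bound $k\ge C_*\rho$ in \eqref{eq3.1} with $C_*\ge e^{\bar\tau_*+2(2^{j_*}/\varepsilon)^{p-2}}$ is used.
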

\begin{proof}
Using Lemma \ref{lem2.2} with $k=k_{j}:=\frac{\varepsilon}{2^{j}}$ and $l= k_{j-1}$, $1\leqslant j \leqslant j_{*}$, due to \eqref{eq3.9} we obtain
\begin{equation*}
k_{j} |A_{k_{j},4}(\tau)| \leqslant \gamma(\beta_{0})  \int\limits_{A_{k_{j-1},4}(\tau)\setminus A_{k_{j},4}(\tau)}|\nabla w| \,dx,\quad \tau \geqslant \bar{\tau}_{*},
\end{equation*}
where $A_{k_{j},4}(\tau):= B_{4}(0)\cap \{w(\cdot, \tau) < k_{j}\}$.

Integrating this inequality with respect to $\tau$,  $\tau\in(\bar{\tau}_{*} + \frac{1}{2}k^{2-p}_{j_{*}}, \bar{\tau}_{*}+  k^{2-p}_{j_{*}})$ and using the H\"{o}lder inequality
we have
\begin{equation}\label{eq3.13}
k^{\frac{p}{p-1}}_{j} |A_{j}|^{\frac{p}{p-1}} \leqslant \gamma(\beta_{0}) \left(\,\,\iint\limits_{A_{j-1}}| \nabla w|^{p} dy\,d\tau\right)^{\frac{1}{p-1}}|A_{j-1}\setminus A_{j}|,
\end{equation}
where $A_{j}:= \int\limits_{\bar{\tau}_{*}+\frac{1}{2}k^{2-p}_{j_{*}}}^{\bar{\tau}_{*}+k^{2-p}_{j_{*}}}A_{k_{j},4}(\tau)\,d\tau$.

To estimate the first factor on the right-hand side of \eqref{eq3.13}, similarly to Lemma \ref{lem2.3} with $|\frac{d}{d\tau}\zeta_{2}|\leqslant \gamma k^{p-2}_{j_{*}}$, by structure inequalities \eqref{eq3.11} we obtain
\begin{multline*}
\sup\limits_{\bar{\tau}_{*}+\frac{1}{2}k^{2-p}_{j_{*}}<\tau <\bar{\tau}_{*}+  k^{2-p}_{j_{*}}}\int\limits_{B_{4}(0)}(w-k_{j-1})_{-}^{2}\,dx+
\iint\limits_{A_{j-1}}| \nabla w|^{p} dy\,d\tau \leqslant \\ \leqslant \gamma\big( k^{2}_{j-1}k^{p-2}_{j_{*}}+ k^{p}_{j-1}\big)|Q_{1}^{*}\cap\{w<k_{j-1}\}|+ \gamma k^{q}_{j-1}\iint\limits_{Q_{1}^{*}\cap\{w<k_{j-1}\}}\bar{a}(y,\tau)\bigg(\frac{k}{e^{\tau}r}\bigg)^{q-p}\,dy\, d\tau
\leqslant \\ \leqslant \gamma k^{p}_{j} |Q^{*}\cap\{w<k_{j-1}\}|\left\{1+  \,\bigg(\frac{k_{j}k}{e^{\bar{\tau}_{*}}r}\bigg)^{q-p}\,\bar{a}^{+}_{Q_{6}^{*}}\right\},
\end{multline*}
where $Q_{6}^{*}:= B_{6}(0)\times (\bar{\tau}_{*}+ \frac{1}{4} k^{2-p}_{j_{*}}, \bar{\tau}_{*}+ 2 k^{2-p}_{j_{*}})$, $\bar{a}^{+}_{Q_{6}^{*}}=\max\limits_{Q_{6}^{*}}\bar{a}(y,\tau).$ To estimate the last term on the right-hand side
of this inequality, we  note that by  condition ($i$)
$$\bigg(\frac{k_{j} k}{e^{\bar{\tau}_{*}}r}\bigg)^{q-p}\,\bar{a}^{+}_{Q_{6}^{*}}\leqslant \bigg(\frac{\bar{\varepsilon} k}{r}\bigg)^{q-p}
a^{+}_{Q_{6r, (6r)^{2}}(\bar{x},\bar{t})}\leqslant 1, $$
provided that $\frac{\delta}{2}r^{p}\,k^{2-p}\,e^{(\bar{\tau}_{*}+2(\frac{2^{j_{*}}}{\varepsilon})^{p-2})^{p-2}}\leqslant r^{2}$. By the first
inequality in \eqref{eq3.1} this inequality holds if $C_{*}\geqslant e^{\bar{\tau}_{*}+2(\frac{2^{j_{*}}}{\varepsilon})^{p-2}}$.

Hence
\begin{equation}\label{eq3.14}
\sup\limits_{\bar{\tau}_{*}+\frac{1}{2}k^{2-p}_{j_{*}}<\tau <\bar{\tau}_{*}+  k^{2-p}_{j_{*}}}\int\limits_{B_{4}(0)}(w-k_{j-1})_{-}^{2}\,dx+
\iint\limits_{A_{j-1}}| \nabla w|^{p} dy\,d\tau \leqslant \gamma  k^{p}_{j} |\bar{Q}^{*}\cap\{w< k_{j-1}\}|.
\end{equation}
Combining estimates \eqref{eq3.13} and \eqref{eq3.14} we obtain
\begin{equation*}
|A_{j}|^{\frac{p}{p-1}} \leqslant \gamma(\beta_{0}) |Q^{*}|^{\frac{1}{p-1}}|A_{j-1}\setminus A_{j}|.
\end{equation*}
Summing up the last inequalities over $j$, $1\leqslant j\leqslant j_{*}$, we conclude that
\begin{equation*}
j_{*}^{\frac{p-1}{p}}\,|A_{j_{*}}|\leqslant \gamma(\beta_{0}) |Q^{*}|.
\end{equation*}
Choosing $j_{*}$ by the condition
\begin{equation*}
j_{*}^{-\frac{p-1}{p}} \gamma(\beta_{0}) \leqslant \nu,
\end{equation*}
we obtain inequality \eqref{eq3.12}, which proves Lemma \ref{lem3.2}.
\end{proof}
Use Lemma \ref{lem2.6}, by \eqref{eq3.14} inequality \eqref{eq2.6} holds  for $u$ replaced by $w$, $m=p$, $k=k_{j_{*}}$ and $\eta=\gamma k^{2-p}_{j_{*}}$, therefore, we obtain
\begin{equation*}
w(y,\tau) \geqslant k_{j_{*}+1},\quad y \in B_{2}(0),
\end{equation*}
for all $\bar{\tau}_{*} + \frac{5}{8}k^{2-p}_{j_{*}} \leqslant \tau \leqslant \bar{\tau}_{*} + \frac{3}{4}k^{2-p}_{j_{*}}$.

This inequality for $u$ translates into
\begin{equation*}
u(x,t) \geqslant \frac{\varepsilon k e^{-\bar{\tau}_{*}-\frac{3}{4}(\frac{2^{j_{*}}}{\varepsilon})^{p-2}}}{2^{j_{*}+1}}=
\frac{\varepsilon^{2} k e^{- \tau_{*}-\frac{3}{4}(\frac{2^{j_{*}}}{\varepsilon})^{p-2}}}{2^{2j_{*}+1}},\quad x \in B_{2r}(\bar{x}),
\end{equation*}
for all $\bar{t}+\frac{\delta}{2}r^{p} \bigg(\dfrac{2^{j_{*}}e^{\tau_{*}+\frac{5}{8}(\frac{2^{j_{*}}}{\varepsilon})^{p-2}}}{ \varepsilon k}\bigg)^{p-2}\leqslant t \leqslant \bar{t}+\frac{\delta}{2}r^{p} \bigg(\dfrac{2^{j_{*}}e^{\tau_{*}+\frac{3}{4}(\frac{2^{j_{*}}}{\varepsilon})^{p-2}}}{ \varepsilon k}\bigg)^{p-2}$.

Choose $\sigma_{0}=\dfrac{\varepsilon^{2} e^{-\tau_{*}-\frac{3}{4}(\frac{2^{j_{*}}}{\varepsilon})^{p-2}}}{2^{2j_{*}+1}}$, by condition ($i$)
\begin{equation*}
\Big(\frac{\bar{\varepsilon} k}{r}\Big)^{p}\leqslant \varphi^{+}_{Q_{6r, (6r)^{2}}(\bar{x},\bar{t})}\bigg(\frac{\bar{\varepsilon} k}{r}\bigg)\leqslant 2 \Big(\frac{\bar{\varepsilon} k}{r}\Big)^{p},
\end{equation*}
so, from the previous we obtain
\begin{equation*}
u(x, t)\geqslant \sigma_{0} k, \quad x \in B_{2r}(\bar{x}),
\end{equation*}
for all $\bar{t}+\gamma_{1}(\tau_{*}, j_{*}, \varepsilon, \bar{\varepsilon})\,\dfrac{(\sigma_{0}k)^{2}}{\varphi^{+}_{Q_{6r, (6r)^{2}}(\bar{x},\bar{t})}(\frac{\sigma_{0}k}{r})}\leqslant t\leqslant \bar{t}+\gamma_{2}(\tau_{*}, j_{*}, \varepsilon, \bar{\varepsilon})\,\dfrac{(\sigma_{0}k)^{2}}{\varphi^{+}_{Q_{6r, (6r)^{2}}(\bar{x},\bar{t})}(\frac{\sigma_{0}k}{r})}$, \\$\gamma_{2}(\tau_{*}, j_{*}, \varepsilon, \bar{\varepsilon})=e^{\gamma(\frac{2^{j_{*}}}{\varepsilon})^{p-2}}\gamma_{1}(\tau_{*}, j_{*}, \varepsilon, \bar{\varepsilon})$, which proves Proposition~\ref{pr3.1} in the case  ($i$) with \\$b_{1}= \gamma_{1}(\tau_{*}, j_{*}, \varepsilon, \bar{\varepsilon})$ and $b_{2}= \gamma_{2}(\tau_{*}, j_{*}, \varepsilon, \bar{\varepsilon})$, by our choices numbers $\sigma_{0}$, $\tau_{*}$, $j_{*}$, $\varepsilon$, $\bar{\varepsilon}$ depend only on the data and $\beta_{0}$.

\subsection{Proof of Proposition \ref{pr3.1} in the case ($ii$)}\label{subsec.3.2}
First we prove the inequality
\begin{equation}\label{eq3.15}
a^{+}_{Q_{6r,(6r)^{2}}(\bar{x},\bar{t})} \leqslant 2 a^{-}_{Q_{6r,(6r)^{2}}(\bar{x},\bar{t})},
\end{equation}
provided that $\varepsilon_{0}$ is small enough.

Set $l:=\dfrac{s-p+2}{s-q+2} > 1$, by  condition ($A$) and the Young inequality,  using the fact that $(\alpha+p-q)\frac{l}{l-1}-p-n =(\alpha+p-q)\frac{s-p+2}{q-p} -p-n \geqslant 0$ we obtain
\begin{multline*}
\Big(a^{+}_{Q_{6r,(6r)^{2}}(\bar{x},\bar{t})}- a^{-}_{Q_{6r,(6r)^{2}}(\bar{x},\bar{t})}\Big)\frac{(\bar{\varepsilon}k)^{q-2}}{r^{q}} \leqslant
6^{\alpha} A r^{\alpha-q} (\bar{\varepsilon}k)^{q-2}= 6^{\alpha} A r^{\alpha-q} (\bar{\varepsilon}k)^{\frac{p-2}{l}+\frac{(p-2)(l-1)}{l}+q-p}\leqslant\\ \leqslant
\frac{(\bar{\varepsilon}k)^{p-2}}{4r^{p}}+\gamma r^{(\alpha+p-q)\frac{s-p+2}{q-p}-p} (\bar{\varepsilon}k)^{s}
\leqslant \frac{(\bar{\varepsilon}k)^{p-2}}{4r^{p}}+\gamma r^{n} (\bar{\varepsilon}k)^{s}.
\end{multline*}
We estimate the second term on the right-hand side of the previous inequality by using \eqref{eq3.1} and the second inequality in condition ($ii$)
\begin{multline*}
\gamma r^{n} (\bar{\varepsilon}k)^{s}\leqslant \bar{\varepsilon}^{s-q+2}\,\gamma_{0}\,\gamma\bigg(\frac{r}{\rho}\bigg)^{n}
\bigg\{\frac{(\bar{\varepsilon}k)^{p-2}}{\rho^{p}}+\,a^{+}_{Q_{6\rho,(6\rho)^{2}}(x_{0},t_{0})}\frac{(\bar{\varepsilon}k)^{q-2}}{\rho^{q}}\bigg\}\leqslant \\ \leqslant 2\bar{\varepsilon}^{s-q+2}\,\gamma_{0}\,\gamma\frac{(\bar{\varepsilon}k)^{p-2}}{\rho^{p}}\leqslant 2\bar{\varepsilon}^{s-q+2}\,\gamma_{0}\,\gamma\frac{(\bar{\varepsilon}k)^{p-2}}{r^{p}}.
\end{multline*}
Now, from the previous, using the first inequality in ($ii$)
\begin{multline*}
 \Big(a^{+}_{Q_{6r,(6r)^{2}}(\bar{x},\bar{t})}- a^{-}_{Q_{6r,(6r)^{2}}(\bar{x},\bar{t})}\Big)\frac{(\bar{\varepsilon}k)^{q-2}}{r^{q}} \leqslant
 \Big\{\frac{1}{4}+2\bar{\varepsilon}^{s-q+2}\,\gamma_{0}\,\gamma\Big\}\frac{(\bar{\varepsilon}k)^{p-2}}{r^{p}}\leqslant \\ \leqslant\Big\{\frac{1}{4}+2\bar{\varepsilon}^{s-q+2}\,\gamma_{0}\,\gamma\Big\}a^{+}_{Q_{6r,(6r)^{2}}(\bar{x},\bar{t})}\frac{(\bar{\varepsilon}k)^{q-2}}{r^{q}},
\end{multline*}
from which the required \eqref{eq3.15} follows, provided that $j_{*}$ is chosen to satisfy
\begin{equation}\label{eq3.16}
2 \bar{\varepsilon}^{s-q+2} \gamma_{0}\,\gamma\leqslant \frac{2\gamma_{0}\,\gamma}{2^{j_{*}(s-q+2)}}\leqslant\dfrac{1}{4}.
\end{equation}
 In addition, by the first inequality in condition ($ii$)
\begin{equation}\label{eq3.17}
a^{+}_{Q_{6r,(6r)^{2}}(\bar{x},\bar{t})}\bigg(\frac{ k}{r e^{\tau}}\bigg)^{q-p} \geqslant 1 \quad \text{for} \quad 0< \tau \leqslant \bar{\tau}_{*}.
\end{equation}
So,   inequality \eqref{eq3.5}  for all $ 0< \tau \leqslant \bar{\tau}_{*}\leqslant \log C_{*}$ yields
\begin{equation*}
\left|\left\{B_{r}(\bar{x}) : u\left(\cdot, \bar{t}+\frac{\delta}{2a^{+}_{Q_{6r,(6r)^{2}}(\bar{x},\bar{t})}}r^{q}\bigg(\frac{e^{\tau}}{k}\bigg)^{q-2}\right) \geqslant \varepsilon e^{-\tau} k\right\}\right| \geqslant \frac{\beta_{0}}{4} |B_{r}(\bar{x})|.
\end{equation*}
Consider the function
$$w(y,\tau):=\frac{e^{\tau}}{k} u\left(\bar{x} +r y, \bar{t}+ \frac{\delta}{2a^{+}_{Q_{6r,(6r)^{2}}(\bar{x},\bar{t})}}r^{q}\bigg(\frac{e^{\tau}}{k}\bigg)^{q-2}\right),\quad 0< \tau \leqslant \bar{\tau}_{*}.$$
The previous inequality  translates into $w$ as $\big|\big\{B_{1}(0) : w(\cdot, \tau) \geqslant \varepsilon \big\}\big| \geqslant \dfrac{\beta_{0}}{4} |B_{1}(0)|$, which implies
\begin{equation}\label{eq3.18}
\big|\big\{B_{4}(0) : w(\cdot, \tau) \geqslant \varepsilon \big\}\big| \geqslant \dfrac{\beta_{0}}{ 4^{n+1}} |B_{4}(0)|, \quad \text{for all}\quad 0< \tau \leqslant \bar{\tau}_{*}.
\end{equation}
By differentiation
\begin{equation}\label{eq3.19}
w_{\tau}= w + \frac{\delta(q-2)}{2a^{+}_{Q_{6r,(6r)^{2}}(\bar{x},\bar{t})}} r^{q}\bigg(\frac{e^{\tau}}{k}\bigg)^{q-1} u_{t} \geqslant div\, \bar{\mathbb{A}}(y,\tau,\nabla w),
\end{equation}
where $\bar{\mathbb{A}}$ satisfies the inequalities
\begin{equation}\label{eq3.20}
\begin{aligned}
& \bar{\mathbb{A}}(y,\tau,\nabla w) \nabla w \geqslant K_{1} \frac{\delta}{2} (q-2)\left\{\frac{1}{a^{+}_{Q_{6r,(6r)^{2}}(\bar{x},\bar{t})}}\bigg(\frac{e^{\tau}r}{k}\bigg)^{q-p}|\nabla w|^{p} +
\frac{\bar{a}(y,\tau)}{a^{+}_{Q_{6r,(6r)^{2}}(\bar{x},\bar{t})}}|\nabla w|^{q}\right\},
\\
&|\bar{\mathbb{A}}(y,\tau,\nabla w)| \leqslant K_{2}\frac{\delta}{2} (q-2)\left\{\frac{1}{a^{+}_{Q_{6r,(6r)^{2}}(\bar{x},\bar{t})}}\bigg(\frac{e^{\tau}r}{k}\bigg)^{q-p}|\nabla w|^{p-1} +\negthickspace \frac{\bar{a}(y,\tau)}{a^{+}_{Q_{6r,(6r)^{2}}(\bar{x},\bar{t})}}|\nabla w|^{q-1}\right\},
\end{aligned}
\end{equation}
and $\bar{a}(y,\tau):= a\bigg(\bar{x} +r y, \bar{t}+ \frac{\delta}{2a^{+}_{Q_{6r,(6r)^{2}}(\bar{x},\bar{t})}}r^{q}\bigg(\dfrac{e^{\tau}}{k}\bigg)^{q-2}\bigg).$

\begin{lemma}\label{lem3.3}
For any $\nu\in (0,1)$ there exists $j_{*}$, depending only on the data, $\beta_{0}$ and $\nu$ such that
\begin{equation}\label{eq3.21}
\left|\left\{Q^{*}: w \leqslant \frac{\varepsilon}{2^{j_{*}}} \right\}\right| \leqslant \nu |Q^{*}|,
\end{equation}
$Q^{*}= B_{4}(0)\times ( \frac{1}{2}\big(\frac{2^{j_{*}}}{\varepsilon}\big)^{q-2},  \frac{3}{4} \big(\frac{2^{j_{*}}}{\varepsilon}\big)^{q-2})$.
\end{lemma}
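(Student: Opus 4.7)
The plan is to mirror the proof of Lemma \ref{lem3.2} with the roles of $p$ and $q$ interchanged, exploiting the fact that in case ($ii$) it is the $q$--growth term in the rescaled structure \eqref{eq3.20} whose coefficient $\bar{a}/a^+$ is now of order one (thanks to \eqref{eq3.15}), while the $p$--growth coefficient $\frac{1}{a^+}(re^\tau/k)^{q-p}$ becomes favourably small on the time slab occupied by $Q^*$. Concretely, using \eqref{eq3.18}, I first apply Lemma \ref{lem2.2} slice-wise at each $\tau$ to the levels $k_j = \varepsilon/2^j$ and $k_{j-1}$, integrate in $\tau$ over the interval $I=(\tfrac{1}{2}(\tfrac{2^{j_*}}{\varepsilon})^{q-2},\tfrac{3}{4}(\tfrac{2^{j_*}}{\varepsilon})^{q-2})$, and Hölder in space-time with exponent $q$ to obtain
\begin{equation*}
k_j^{\frac{q}{q-1}} |A_j|^{\frac{q}{q-1}} \leqslant \gamma(\beta_0) \Bigl(\iint_{A_{j-1}} |\nabla w|^q\,dy\,d\tau\Bigr)^{\frac{1}{q-1}} |A_{j-1}\setminus A_j|,
\end{equation*}
where $A_j = \int_I A_{k_j,4}(\tau)\,d\tau$.

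The core estimate is an energy inequality for $w$ that bounds $\iint_{A_{j-1}}|\nabla w|^q$ by $\gamma k_{j-1}^q |Q^*\cap \{w<k_{j-1}\}|$. Testing the super-solution inequality \eqref{eq3.19} with $(w-k_{j-1})_-\zeta^q$ (with $|\partial_\tau\zeta_2|\lesssim k_{j_*}^{q-2}$), using the structure \eqref{eq3.20}, and applying Young's inequality produces a left-hand side controlling $\iint \frac{\bar{a}}{a^+}|\nabla(w-k_{j-1})_-|^q\zeta^q$, and a right-hand side of the form
\begin{equation*}
\gamma\bigl(k_{j-1}^2 k_{j_*}^{q-2} + k_{j-1}^q\bigr)|Q^*\cap\{w<k_{j-1}\}| + \gamma\, k_{j-1}^p \iint_{Q^*\cap\{w<k_{j-1}\}} \frac{1}{a^+_{Q_{6r,(6r)^2}(\bar{x},\bar{t})}} (re^\tau/k)^{q-p}\,dy\,d\tau.
\end{equation*}
Since $j\leq j_*$ gives $k_{j-1}\geq k_{j_*}$, the first parenthesis is $\leqslant\gamma k_{j-1}^q$. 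For the genuinely mixed term, by the case ($ii$) hypothesis $\frac{1}{a^+}\leqslant(\bar{\varepsilon}k/r)^{q-p}$, so
\begin{equation*}
\frac{1}{a^+}\Bigl(\frac{re^\tau}{k}\Bigr)^{q-p} \leqslant (\bar{\varepsilon} e^\tau)^{q-p} = \Bigl(\frac{\varepsilon\,e^{\tau-\tau_*}}{2^{j_*}}\Bigr)^{q-p};
\end{equation*}
on $Q^*$ we have $\tau\leqslant\tfrac{3}{4}\tau_*$, so $\bar{\varepsilon}e^\tau \leqslant \varepsilon\,e^{-\tau_*/4}/2^{j_*} \leqslant k_{j-1}$ (valid for $1\leqslant j\leqslant j_*$), and hence $k_{j-1}^p\cdot\frac{1}{a^+}(re^\tau/k)^{q-p}\leqslant k_{j-1}^q$. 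Coupling this with $\bar{a}/a^+\geqslant \tfrac12$ from \eqref{eq3.15}, the energy estimate yields the desired bound on $\iint_{A_{j-1}}|\nabla w|^q$. (The inclusion $Q^*\subset\{\tau\leqslant\bar{\tau}_*\leqslant\log C_*\}$ and the first inequality of \eqref{eq3.1} ensure everything stays within the reference cylinder $Q_{6r,(6r)^2}(\bar{x},\bar{t})$.)

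Substituting this energy bound into the Hölder--De Giorgi inequality collapses the factor $k_{j-1}^q/k_j^{q/(q-1)\cdot(q-1)}=2^{q/(q-1)}$, giving the uniform recursion
\begin{equation*}
|A_j|^{\frac{q}{q-1}} \leqslant \gamma(\beta_0)\, |Q^*|^{\frac{1}{q-1}}\, |A_{j-1}\setminus A_j|, \qquad 1\leqslant j \leqslant j_*.
\end{equation*}
Summing in $j$, using $|A_{j_*}|\leqslant|A_j|$ and $\sum|A_{j-1}\setminus A_j|\leqslant|Q^*|$, gives $j_*^{(q-1)/q}|A_{j_*}|\leqslant \gamma(\beta_0)|Q^*|$; choosing $j_*$ large enough so that $\gamma(\beta_0)j_*^{-(q-1)/q}\leqslant \nu$ (and also meeting the earlier requirement \eqref{eq3.16}) yields \eqref{eq3.21}.

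The main obstacle is the bookkeeping around the mixed $p$--$q$ right-hand side of the energy estimate: naively, $k_{j-1}^p$ would dominate $k_{j-1}^q$ because $k_{j-1}<1$ and $p<q$, which would destroy the uniformity of the iteration constant in $j$. The rescue is precisely the quantitative smallness of $\frac{1}{a^+}(re^\tau/k)^{q-p}$ on $Q^*$, together with the fact that $Q^*$ is strictly below the critical time level $\bar{\tau}_*$; checking these inequalities carefully (and verifying they are compatible with the lower bound $C_*\rho\leqslant k$ from \eqref{eq3.1}) is the technical heart of the proof.
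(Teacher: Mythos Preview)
Your proposal is correct and follows essentially the same route as the paper: slice-wise De Giorgi--Poincar\'e combined with an energy estimate for $(w-k_{j-1})_-$ in which \eqref{eq3.15} turns the $q$--term coefficient $\bar a/a^+$ into a constant, the case-($ii$) hypothesis controls the mixed $p$--term via $(\bar\varepsilon e^\tau)^{q-p}\leqslant k_{j-1}^{q-p}$, and summation in $j$ gives the $j_*^{-(q-1)/q}$ gain. The only cosmetic difference is that the paper runs the energy estimate on an enlarged cylinder $Q_6^*=B_6(0)\times(\tfrac14 k_{j_*}^{2-q},\tfrac78 k_{j_*}^{2-q})$ to accommodate the cutoff (so $\tau\leqslant\tfrac78\tau_*$ rather than your $\tfrac34\tau_*$), but the arithmetic goes through identically.
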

\begin{proof}
Using Lemma \ref{lem2.2} with $k=k_{j}:=\frac{\varepsilon}{2^{j}}$ and $l= k_{j-1}$, $1\leqslant j \leqslant j_{*}$, due to \eqref{eq3.18} we obtain
\begin{equation*}
k_{j} |A_{k_{j},4}(\tau)| \leqslant \gamma(\beta_{0})  \int\limits_{A_{k_{j-1},4}(\tau)\setminus A_{k_{j},4}(\tau)}|\nabla w| \,\,dx,\quad 0 <\tau \leqslant \bar{\tau}_{*},
\end{equation*}
where $A_{k_{j},4}(\tau):= B_{4}(0)\cap \{u(\cdot, \tau) < k_{j}\}$.

Integrating this inequality with respect to $\tau$,  $\tau\in( \frac{1}{2}k^{2-q}_{j_{*}},   \frac{3}{4}k^{2-q}_{j_{*}})$ and using the H\"{o}lder inequality
we have
\begin{equation}\label{eq3.22}
k^{\frac{q}{q-1}}_{j} |A_{j}|^{\frac{q}{q-1}} \leqslant \gamma(\beta_{0}) \left(\,\,\iint\limits_{A_{j-1}}| \nabla w|^{q} dy\,d\tau\right)^{\frac{1}{q-1}}|A_{j-1}\setminus A_{j}|,
\end{equation}
where $A_{j}:= \int\limits_{\frac{1}{2}k^{2-q}_{j_{*}}}^{\frac{3}{4}k^{2-q}_{j_{*}}}A_{k_{j},4}(\tau)\,d\tau$.

Similarly to Lemma \ref{lem3.2} 
with $|\frac{d}{d\tau}\zeta_{2}|\leqslant \gamma k^{q-2}_{j_{*}}$, by structure conditions \eqref{eq3.20} and \eqref{eq3.15} we estimate the first factor on the right-hand side of \eqref{eq3.22} as follows
\begin{multline}\label{eq3.23}
\sup\limits_{\frac{1}{2}k^{2-q}_{j_{*}}<\tau < \frac{3}{4}k^{2-q}_{j_{*}}}\int\limits_{B_{4}(0)}(w-k_{j-1})_{-}^{2}\,dx+ \frac{1}{2}\iint\limits_{A_{j-1}}| \nabla w|^{q} dy\,d\tau \leqslant\\
\leqslant \sup\limits_{\frac{1}{2}k^{2-q}_{j_{*}}<\tau <  \frac{3}{4}k^{2-q}_{j_{*}}}\int\limits_{B_{4}(0)}(w-k_{j-1})_{-}^{2}\,dx+ \iint\limits_{A_{j-1}}\frac{\bar{a}(y,\tau)}{a^{+}_{Q_{6r,(6r)^{2}}(\bar{x},\bar{t})}}| \nabla w|^{q} dy\,d\tau \leqslant \\
\leqslant\gamma k^{2}_{j-1}k^{q-2}_{j_{*}}|Q_{1}^{*}\cap\{w<k_{j-1}\}|+\gamma \frac{k^{p}_{j}}{a^{+}_{Q_{6r,(6r)^{2}}(\bar{x},\bar{t})}}\iint\limits_{Q^{*}_{6}\cap\{w<k_{j-1}\}}\bigg(\frac{e^{\tau}r}{k}\bigg)^{q-p}dy d\tau+ \\
+\gamma k^{q}_{j}\iint\limits_{Q^{*}_{6}\cap\{w<k_{j-1}\}}\frac{\bar{a}(y,\tau)}{a^{+}_{Q_{6r,(6r)^{2}}(\bar{x},\bar{t})}}dy d\tau \leqslant \gamma k^{q}_{j}|Q_{6}^{*}\cap\{w<k_{j-1}\}|+\\
+\gamma \frac{k^{p}_{j}}{a^{+}_{Q_{6r,(6r)^{2}}(\bar{x},\bar{t})}}\bigg(\frac{e^{\frac{7}{8}k^{2-q}_{j_{*}}}r}{k}\bigg)^{q-p} |Q^{*}_{6}\cap\{w<k_{j-1}\}|,
\end{multline}
where $Q^{*}_{6}:=B_{6}(0)\times (\frac{1}{4}k^{2-q}_{j_{*}},  \frac{7}{8}k^{2-q}_{j_{*}})$.
By our choices, \eqref{eq3.17} and the first inequality in ($ii$)
\begin{equation*}
\frac{k^{p}_{j}}{a^{+}_{Q_{6r,(6r)^{2}}(\bar{x},\bar{t})}}\bigg(\frac{e^{\frac{7}{8}k^{2-q}_{j_{*}}}r}{k}\bigg)^{q-p} \leqslant
\frac{k^{q}_{j}}{a^{+}_{Q_{6r,(6r)^{2}}(\bar{x},\bar{t})}}\bigg(\frac{r}{\bar{\varepsilon} k}\bigg)^{q-p}\leqslant k^{q}_{j}.
\end{equation*}
 Therefore, inequality \eqref{eq3.23} yields
\begin{equation}\label{eq3.24}
\sup\limits_{\frac{1}{2}k^{2-q}_{j_{*}}<\tau < \frac{3}{4}k^{2-q}_{j_{*}}}\int\limits_{B_{4}(0)}(w-k_{j-1})_{-}^{2}\,dx+ \iint\limits_{A_{j-1}}| \nabla w|^{q} dy\,d\tau \leqslant \gamma k^{q}_{j} |Q^{*}\cap\{w<k_{j-1}\}|.
\end{equation}
Combining \eqref{eq3.22} and \eqref{eq3.24} we arrive at
\begin{equation*}
|A_{j}|^{\frac{q}{q-1}}\leqslant \gamma(\beta_{0}) |Q^{*}|^{\frac{1}{q-1}}|A_{j-1}\setminus A_{j}|.
\end{equation*}
Summing up this inequalities in $j$, $1\leqslant j \leqslant j_{*}$ and choosing $j_{*}$ by the condition $j_{*}^{-\frac{q-1}{q}}\gamma(\beta_{0})\leqslant \nu$, we arrive at the required \eqref{eq3.21}, which proves the lemma.
\end{proof}

Use Lemma \ref{lem2.6}, similarly to that of \eqref{eq3.24} inequality \eqref{eq2.6} holds  for $u$ replaced by $w$, $m=q$, $k=k_{j_{*}}$ and $\eta=\gamma k^{2-q}_{j_{*}}$  we obtain
\begin{equation*}
w(y,\tau) \geqslant k_{j_{*}+1},\quad y \in B_{2}(0),
\end{equation*}
for all $ \frac{9}{16}k^{2-q}_{j_{*}} \leqslant \tau \leqslant  \frac{5}{8}k^{2-q}_{j_{*}}$.

This inequality for $u$ translates into
\begin{equation*}
u(x,t) \geqslant \frac{\varepsilon ke^{-\frac{5}{8}(\frac{2^{j_{*}}}{\varepsilon})^{q-2}}}{2^{j_{*}+1}},\quad x \in B_{2r}(\bar{x}),
\end{equation*}
for all \\$\bar{t}+ \dfrac{\delta}{2a^{+}_{Q_{6r,(6r)^{2}}(\bar{x},\bar{t})}}r^{q}k^{2-q} e^{\frac{9}{16}(q-2)(\frac{2^{j_{*}}}{\varepsilon})^{q-2}}
\leqslant t \leqslant \bar{t}+ \dfrac{\delta}{4a^{+}_{Q_{6r,(6r)^{2}}(\bar{x},\bar{t})}}r^{q}k^{2-q} e^{\frac{5}{8}(q-2)(\frac{2^{j_{*}}}{\varepsilon})^{q-2}}.$

Choose $\sigma_{0}=\dfrac{\varepsilon k e^{-\frac{5}{8}(\frac{2^{j_{*}}}{\varepsilon})^{q-2}}}{2^{j_{*}+1}}$,   by the first inequality in condition ($ii$)
\begin{equation*}
a^{+}_{Q_{6r, (6r)^{2}}(\bar{x},\bar{t})}\Big(\frac{\bar{\varepsilon} k}{r}\Big)^{q}\leqslant \varphi^{+}_{Q_{6r, (6r)^{2}}(\bar{x},\bar{t})}\bigg(\frac{\bar{\varepsilon} k}{r}\bigg)\leqslant 2 a^{+}_{Q_{6r, (6r)^{2}}(\bar{x},\bar{t})}\Big(\frac{\bar{\varepsilon} k}{r}\Big)^{q},
\end{equation*}
so, from the previous we obtain
\begin{equation*}
u(x, t)\geqslant \sigma_{0} k, \quad x \in B_{2r}(\bar{x}),
\end{equation*}
for all $\bar{t}+\gamma_{3}(\tau_{*}, j_{*}, \varepsilon, \bar{\varepsilon})\,\dfrac{(\sigma_{0}k)^{2}}{\varphi^{+}_{Q_{6r, (6r)^{2}}(\bar{x},\bar{t})}(\frac{\sigma_{0}k}{r})}\leqslant t\leqslant \bar{t}+\gamma_{4}(\tau_{*}, j_{*}, \varepsilon, \bar{\varepsilon})\,\dfrac{(\sigma_{0}k)^{2}}{\varphi^{+}_{Q_{6r, (6r)^{2}}(\bar{x},\bar{t})}(\frac{\sigma_{0}k}{r})}$,\\ $\gamma_{4}(\tau_{*}, j_{*}, \varepsilon, \bar{\varepsilon})=e^{\gamma(\frac{2^{j_{*}}}{\varepsilon})^{q-2}}\gamma_{3}(\tau_{*}, j_{*}, \varepsilon, \bar{\varepsilon})$, which proves Proposition~\ref{pr3.1} in the case  ($ii$) with \\$b_{1}= \gamma_{3}(\tau_{*}, j_{*}, \varepsilon, \bar{\varepsilon})$ and $b_{2}= \gamma_{4}(\tau_{*}, j_{*}, \varepsilon, \bar{\varepsilon})$, by our choices numbers $\sigma_{0}$, $\tau_{*}$, $j_{*}$, $\varepsilon$, $\bar{\varepsilon}$ depend only on the data and $\beta_{0}$.

\subsection{Proof of Proposition \ref{pr3.1} under condition \eqref{eq3.8}}\label{subsec.3.3}
To complete the proof of Proposition \ref{pr3.1}, we note that in the case \eqref{eq3.8} similarly to \eqref{eq3.15}
\begin{multline*}
a^{+}_{Q_{6\rho,(6\rho)^{2}}(x_{0},t_{0})}\frac{(\bar{\varepsilon}k)^{q-2}}{\rho^{q}}- a^{-}_{Q_{6\rho,(6\rho)^{2}}(x_{0}, t_{0})}\frac{(\bar{\varepsilon}k)^{q-2}}{\rho^{q}} \leqslant  6^{\alpha} A \rho^{\alpha-q} (\bar{\varepsilon} k)^{q-2}=\\= 6^{\alpha} A \rho^{\alpha-q} (\bar{\varepsilon} k)^{\frac{p-2}{l}+\frac{(p-2)(l-1)}{l}+q-p} 
\leqslant \frac{(\bar{\varepsilon} k)^{p-2}}{4\rho^{p}}+\gamma \rho^{(\alpha+p-q)\frac{s-p+2}{q-p}-p} (\bar{\varepsilon}k)^{s} \leqslant \\ \leqslant a^{+}_{Q_{6\rho,(6\rho)^{2}}(x_{0},t_{0})}\frac{(\bar{\varepsilon} k)^{q-2}}{4\rho^{q}}+ \bar{\varepsilon}^{s-q+2} \gamma_{0}\,\gamma \rho^{(\alpha+p-q)\frac{s-p+2}{q-p}-p-n}(\bar{\varepsilon} k)^{-2}\varphi^{+}_{Q_{6\rho,(6\rho)^{2}}(x_{0},t_{0})}\left(\frac{\bar{\varepsilon}k}{\rho}\right)  \leqslant \\ \leqslant\left(\frac{1}{4}+2\bar{\varepsilon}^{s-q+2}\gamma_{0}\,\gamma \right)a^{+}_{Q_{6\rho,(6\rho)^{2}}(x_{0},t_{0})}\frac{(\bar{\varepsilon} k)^{q-2}}{\rho^{q}},
\end{multline*}
choose $j_{*}$ from the condition \eqref{eq3.16}, from this we obtain
\begin{equation*}
a^{+}_{Q_{6r,(6r)^{2}}(\bar{x}, \bar{t})}\leqslant a^{+}_{Q_{6\rho,(6\rho)^{2}}(x_{0},t_{0})}\leqslant 2 a^{-}_{Q_{6\rho,(6\rho)^{2}}(x_{0},t_{0})}\leqslant 4 a^{-}_{Q_{6r,(6r)^{2}}(\bar{x}, \bar{t})}.
\end{equation*}
Introduce the change of variables and the new unknown function
$$w(y,\tau):=\frac{e^{\tau}}{k} u(\bar{x}+ r y, \bar{t}+\frac{\delta}{4 a^{+}_{Q_{6r,(6r)^{2}}(\bar{x},\bar{t})}}r^{q}\big(\frac{e^{\tau}}{k}\big)^{q-2}),$$
which satisfies \eqref{eq3.19}, \eqref{eq3.20}. Since condition \eqref{eq3.8} yields
\begin{equation*}
2 a^{+}_{Q_{6r,(6r)^{2}}(\bar{x},\bar{t})}\,\bigg(\frac{\bar{\varepsilon} k}{r}\bigg)^{q-p} \geqslant a^{+}_{Q_{6\rho,(6\rho)^{2}}(x_{0},t_{0})}\bigg(\frac{\bar{\varepsilon} k}{\rho}\bigg)^{q-p} \geqslant 1,
\end{equation*}
the proof of  Proposition \ref{pr3.1} under condition \eqref{eq3.8} is completely similar to that under condition ($ii$). Constant $C_{*}$ in \eqref{eq3.1} we choose by the condition
\begin{equation*}
C_{*}=e^{ \bar{\tau}_{*}+2\big(\frac{2^{j_{*}}}{\varepsilon}\big)^{p-2}}.
\end{equation*}
This completes the proof of Proposition \ref{pr3.1}.

\subsection{Expansion of positivity}\label{subsec.3.4}
Our main result of this Section reads as follows
\begin{theorem}\label{th3.1}
Let $u$ be a weak non-negative super-solution to equation \eqref{eq1.1}, let $k$ satisfies
\begin{equation}\label{eq3.25}
0 < k^{s} \leqslant  \gamma_{0} \bigg(\frac{k^{p-2}}{\rho^{n+p}}+a^{+}_{Q_{6\rho,(6\rho)^{2}}(x_{0}, t_{0})}\frac{k^{q-2}}{\rho^{n+q}}\bigg),
\end{equation}
with some $\gamma_{0}>1$, depending only on the data and let also
\begin{equation}\label{eq3.26}
\big|\big\{B_{\rho}(x_{0}): u(\cdot, \tau) > k \big\}\big|\geqslant \beta |B_{\rho}(x_{0})|,
\end{equation}
with some $\beta \in (0,1)$ and $t_{0}<\tau< t_{0}+\dfrac{k^{2}}{\varphi^{+}_{Q_{\rho,\rho^{2}}(x_{0},t_{0})}(\frac{k}{\rho})}$. Then there exist numbers $C$, $B$, $0< B_{1}\leqslant \dfrac{B_{2}}{2}$ and $\sigma_{1}\in (0,1)$ depending only on the data  such that  either
\begin{equation}\label{eq3.27}
\beta^{B} k\leqslant C\rho,
\end{equation}
or
\begin{equation}\label{3.28}
u(x,t) \geqslant \sigma_{1}\beta^{B} k, \quad x\in B_{2\rho}(x_{0}),
\end{equation}
and for all
\begin{equation}\label{eq3.29}
t_{0}+B_{1}\frac{(\sigma_{1}\beta^{B} k)^{2}}{\varphi^{+}_{Q_{14\rho,(14\rho)^{2}}(x_{0},t_{0})}(\frac{\sigma_{1}\beta^{B}k}{\rho})}\leqslant t \leqslant t_{0}+
B_{2}\frac{(\sigma_{1}\beta^{B}k)^{2}}{\varphi^{+}_{Q_{14\rho,(14\rho)^{2}}(x_{0},t_{0})}(\frac{\sigma_{1}\beta^{B}k}{\rho})},
\end{equation}
provided that $ Q_{16\rho, (16\rho)^{2}}(x_{0},t_{0})\subset \Omega_{T}$.
\end{theorem}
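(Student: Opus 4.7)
The plan is to establish Theorem \ref{th3.1} by combining Lemma \ref{lem3.1} (time propagation of measure--theoretic positivity), Lemma \ref{lem2.1} (local clustering) and iterated applications of Proposition \ref{pr3.1}. First I set up a dichotomy: either \eqref{eq3.27} holds for constants $B,C$ fixed later in the argument, and the proof is complete; or $\beta^B k > C\rho$. In the latter case, together with hypothesis \eqref{eq3.25}, the level $k$ as well as all the reduced levels $\sigma_0^j k$ encountered in the iteration continue to satisfy conditions of the form \eqref{eq3.1}, which legitimizes invocations of Proposition \ref{pr3.1} at each step.

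Next I would propagate the single--time hypothesis \eqref{eq3.26} to a full time interval. An application of Lemma \ref{lem3.1} on $B_\rho(x_0)$ starting at time $\tau$ yields constants $\varepsilon_1,\delta \in (0,1)$ depending on $\beta$ such that
\[
|\{B_\rho(x_0) : u(\cdot,t) \geq \varepsilon_1 k\}| \geq \tfrac{\beta}{4}\,|B_\rho(x_0)|
\]
on a subinterval of length comparable to $\delta(\varepsilon_1 k)^2/\varphi^+_{Q_{14\rho,(14\rho)^2}(x_0,t_0)}(\varepsilon_1 k/\rho)$. Choosing a slice $t_1$ in this interval and using the energy inequality \eqref{eq2.4} to supply the $W^{1,1}$--bound required by Lemma \ref{lem2.1}, I cluster the positivity: there exist $\bar x \in B_\rho(x_0)$ and a radius $\bar r$ polynomially large in $\beta$ such that
\[
|\{K_{\bar r}(\bar x) : u(\cdot,t_1) \geq \xi\varepsilon_1 k\}| \geq (1-\nu)\,|K_{\bar r}(\bar x)|,
\]
with $\nu$ arbitrarily small (at the cost of shrinking $\bar r$).

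With $\nu$ chosen so that $1-\nu$ is an admissible input density for Proposition \ref{pr3.1}, I apply that proposition on $B_{\bar r}(\bar x)$ to produce $u \geq \sigma_0 \xi\varepsilon_1 k$ on $B_{2\bar r}(\bar x)$ during a time interval immediately after $t_1$. Once pointwise positivity is in hand on $B_{2^j \bar r}(\bar x)$, the measure hypothesis holds trivially with $\beta_0 = 1$, and Proposition \ref{pr3.1} can be re--invoked with a uniform data--dependent $\sigma_0$, doubling the spatial radius at each stage while multiplying the level by $\sigma_0$. After $N \sim \log_2(\rho/\bar r)$ steps the ball reaches $B_{2\rho}(x_0)$, and the accumulated constant takes the form $\sigma_1 \beta^B$ for $\sigma_1,B$ depending only on the data; this yields \eqref{3.28}.

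The main obstacle I anticipate is the bookkeeping across the $N$ iterations: verifying that condition \eqref{eq3.1} persists with the reduced level $\sigma_0^j k$ at every intermediate scale (which forces $B$ in the dichotomy to be chosen large enough to absorb $\sigma_0^N$), and composing the $N$ intermediate time intervals into a single final interval of the form \eqref{eq3.29}. The $(A)$ condition, applied uniformly on $Q_{16\rho,(16\rho)^2}(x_0,t_0)$, lets me replace the finer quantities $a^+_{Q_{6r_j,(6r_j)^2}}$ arising in successive calls to Proposition \ref{pr3.1} by the single reference quantity $a^+_{Q_{14\rho,(14\rho)^2}(x_0,t_0)}$ used to normalize the time window in \eqref{eq3.29}, up to fixed multiplicative constants.
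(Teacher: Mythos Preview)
Your outline matches the paper's proof essentially step for step: the dichotomy on $\beta^B k$ versus $C\rho$; time propagation via Lemma~\ref{lem3.1}; a gradient bound on a good time slice feeding into the clustering Lemma~\ref{lem2.1}; then iterated applications of Proposition~\ref{pr3.1} doubling the radius until $B_{2\rho}(x_0)$ is reached.

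Two small corrections. First, the $W^{1,1}$ bound needed for Lemma~\ref{lem2.1} requires a gradient estimate, and that comes from \eqref{eq2.3}, not \eqref{eq2.4}; inequality \eqref{eq2.4} controls only the $\sup$ of $(u-k)_-^2$ and contains no gradient term. The paper integrates the gradient term in \eqref{eq2.3} over the time interval produced by Lemma~\ref{lem3.1} and then selects a slice $t_1$ where both the measure bound and the spatial $L^1$ gradient bound hold simultaneously. Second, your last sentence understates what is needed for the time-window comparison: replacing the intermediate quantities $\varphi^+_{Q_{2^i 6r,(2^i 6r)^2}(\bar x,t_i)}$ by the single reference $\varphi^+_{Q_{14\rho,(14\rho)^2}(x_0,t_0)}$ is not just a matter of monotonicity in the domain. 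In the $(p,q)$-phase one has to reprove an inequality of the type $a^+_{Q_{14\rho}} \leqslant 2\,a^-_{Q_{14\rho}}$ (as in \eqref{eq3.15}), which uses condition~$(A)$ together with the hypothesis \eqref{eq3.25}; only then do the $\varphi^\pm$ become two-sided comparable and the telescoping sum of time increments (your $N$ intervals) collapses into the single window \eqref{eq3.29}.
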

\begin{proof}
In what follows, we assume that
\begin{equation}\label{eq3.30}
\beta^{B} k \geqslant C\rho,
\end{equation}
$C\geqslant C_{*},$ where $C_{*}$ is the constant from the first inequality in \eqref{eq3.1}.
Condition \eqref{eq3.25} and Lemma \ref{lem3.1} (see \eqref{eq3.6}) yield
\begin{equation}\label{eq3.31}
\big|\big\{B_{\rho}(x_{0}): u(\cdot, t) > \frac{\beta}{8} k \big\}\big|\geqslant \frac{\beta}{4} |B_{\rho}(x_{0})|,
\end{equation}
for all $\tau < t \leqslant \tau + \dfrac{\delta k^{2}}{\varphi^{+}_{Q_{6\rho,(6\rho)^{2}}(x_{0}, \tau)}(\frac{k}{\rho})}$,\quad $\delta=\dfrac{\beta^{q+1}}{\gamma}$.
\end{proof}
Write down the energy estimates \eqref{eq2.3} with $k$ replaced by $\frac{\beta}{8} k$, for the pair of cylinders
$Q:= B_{\rho}(x_{0})\times (\tau + \dfrac{\eta}{2}, \tau + \eta)$, $Q_{1}:= B_{2\rho}(x_{0})\times (\tau , \tau + \eta)$, $\eta=\dfrac{\delta k^{2}}{\varphi^{+}_{Q_{6\rho,(6\rho)^{2}}(x_{0},\tau)}(\frac{k}{\rho})}$ and take $\left|\dfrac{d}{dt}\zeta_{2}\right| \leqslant \gamma \dfrac{\varphi^{+}_{Q_{6\rho,(6\rho)^{2}}(x_{0},\tau)}(\frac{k}{\rho})}{\delta k^{2}}$ and $|\nabla \zeta_{1}| \leqslant \dfrac{\gamma}{\rho}$.
By condition ($A$) and \eqref{eq3.25} we have
\begin{multline*}
\iint\limits_{Q}|\nabla \big(u-\frac{\beta}{8} k\big)_{-}|^{p} dx\,dt \leqslant 
\frac{\gamma}{\beta^{q+1}}  \bigg(\frac{\beta k}{\rho}\bigg)^{p}\frac{\bigg(1+a^{+}_{Q_{6\rho,(6\rho)^{2}}(x_{0},\tau)}\big(\frac{\beta k}{8\rho}\big)^{q-p}\bigg)}{\bigg(1+a^{-}_{Q_{6\rho,(6\rho)^{2}}(x_{0},\tau)}\big(\frac{\beta k}{8\rho}\big)^{q-p}\bigg)}\, |Q|\leqslant  \frac{\gamma}{\beta^{q+1}}  \bigg(\frac{\beta k}{\rho}\bigg)^{p}|Q|.
\end{multline*}
From this and \eqref{eq3.31} it follows that there exists $t_{1}\in(\tau + \dfrac{\eta}{2}, \tau+\eta )$ such that
\begin{multline}\label{eq3.32}
\int\limits_{B_{\rho}(x_{0})\times\{t_{1}\}} |\nabla\big(u-\frac{\beta}{8} k\big)_{-}| dx \leqslant \frac{\gamma}{\beta^{\frac{q+1}{p}}} \beta k \rho^{n-1}
\,\, \text{and}\,\, \big|\big\{B_{\rho}(x_{0}): u(\cdot, t_{1}) > \frac{\beta}{8} k \big\}\big|\geqslant \frac{\beta}{4} |B_{\rho}(x_{0})|.
\end{multline}
The local clustering Lemma \ref{lem2.1} with $\mathcal{K}=\frac{\gamma}{\beta^{\frac{q+1}{p}}}$,  $\alpha=\frac{\beta}{4}$, $\nu=\frac{1}{2}$, $\xi=\frac{1}{2}$ and $k$ replaced by $\frac{\beta}{8} k$ yields
\begin{equation}\label{eq3.33}
\left|\left\{B_{r}(\bar{x}): u(\cdot, t_{1}) > \frac{\beta}{16} k \right\}\right|\geqslant \frac{1}{2} |B_{r}(\bar{x})|,
\quad r=\epsilon\beta^{2+\frac{q+1}{p}} \rho
\end{equation}
with some $\bar{x} \in B_{\rho}(y)$ and some $\epsilon \in (0,1)$ depending only on the data.

Proposition \ref{pr3.1} with $\beta_{0}= \frac{1}{2}$ and $k$ replaced by $\frac{\beta}{16} k$ implies
\begin{equation*}
u(x,t) \geqslant \sigma_{0} \beta k,\quad x\in B_{2r}(\bar{x}),
\end{equation*}
for all
$$t_{2}:= t_{1}+b_{1}\frac{(\sigma_{0} \beta k)^{2}}{\varphi^{+}_{Q_{6r,(6r)^{2}}(\bar{x},t_{1})}(\frac{\sigma_{0} \beta k}{r})}\leqslant t \leqslant  t_{1}+b_{2}\frac{(\sigma_{0} \beta k)^{2}}{\varphi^{+}_{Q_{6r,(6r)^{2}}(\bar{x},t_{1})}(\frac{\sigma_{0} \beta k}{r})},$$
with some $\sigma_{0} \in (0,1)$ and $b_{1}$, $b_{2}>0$ depending only on the data. From this by iteration
we obtain
\begin{equation}\label{eq3.34}
u(x,t)\geqslant \sigma^{j}_{0}\beta k,\quad x \in B_{2^{j}r}(\bar{x}),
\end{equation}
for all
\begin{equation}\label{eq3.35}
t_{j+1}:= t_{j}+b_{1}\frac{(\sigma^{j}_{0} \beta k)^{2}}{\varphi^{+}_{Q_{2^{j}6r,(2^{j}6r)^{2}}(\bar{x},t_{j})}(\frac{\sigma^{j}_{0} \beta k}{2^{j}r})}\leqslant t \leqslant
t_{j}+b_{2}\frac{(\sigma^{j}_{0} \beta k)^{2}}{\varphi^{+}_{Q_{2^{j}6r,(2^{j}6r)^{2}}(\bar{x},t_{j})}(\frac{\sigma^{j}_{0} \beta k}{2^{j}r})}.
\end{equation}
Choosing $j$ by the condition $2^{j}r=2\rho$, from \eqref{eq3.34} we obtain
\begin{equation}\label{eq3.36}
u(x,t)\geqslant  \frac{k}{\gamma(\sigma_{0})}\,\beta^{1+(2+\frac{q+1}{p}) \log\frac{1}{\sigma_{0}}}=\sigma_{1} \beta^{B} k,\quad x\in B_{2\rho}(x_{0}),
\end{equation}
for all $t$ satisfying \eqref{eq3.35}. 

By our choices we have
\begin{equation*}
\varphi^{+}_{Q_{2^{j}6r,(2^{j}6r)^{2}}(\bar{x},t_{j})}\Big(\frac{\sigma^{j}_{0} \beta k}{2^{j}r}\Big)\leqslant 
\varphi^{+}_{Q_{12\rho,(12\rho)^{2}}(\bar{x},t_{j})}\Big(\frac{\sigma_{1} \beta^{B} k}{2\rho}\Big)\leqslant
\varphi^{+}_{Q_{14\rho,(14\rho)^{2}}(x_{0}, t_{0})}\Big(\frac{\sigma_{1} \beta^{B} k}{2\rho}\Big),
\end{equation*}
and hence
\begin{equation}\label{eq3.37}
t_{j}+b_{2}\frac{(\sigma^{j}_{0} \beta k)^{2}}{\varphi^{+}_{Q_{2^{j}6r,(2^{j}6r)^{2}}(\bar{x},t_{j})}(\frac{\sigma^{j}_{0} \beta k}{2^{j}r})}\geqslant t_{0}+b_{2}\frac{(\sigma_{1}\beta^{B}  k)^{2}}{\varphi^{+}_{Q_{14 \rho,(14 \rho)^{2}}(x_{0}, t_{0})}(\frac{\sigma_{1} \beta^{B} k}{2 \rho})}.
\end{equation}
In addition, for any $i\leqslant j$
\begin{multline}\label{eq3.38}
\varphi^{+}_{Q_{2^{i}6r,(2^{i}6r)^{2}}(\bar{x},t_{i})}\Big(\frac{\sigma^{i}_{0} \beta k}{2^{i}r}\Big)\geqslant
\varphi^{-}_{Q_{2^{j}6r,(2^{j}6r)^{2}}(\bar{x},t_{j})}\Big(\frac{\sigma^{i}_{0} \beta k}{2^{i}r}\Big)\geqslant\\ \geqslant
\varphi^{-}_{Q_{14 \rho,(14 \rho)^{2}}(x_{0}, t_{0})}\Big(\frac{\sigma^{j}_{0} \beta k}{2^{j}r}\Big)\Big(\frac{\sigma_{0}}{2}\Big)^{p(i-j)}=
\varphi^{-}_{Q_{14 \rho,(14 \rho)^{2}}(x_{0}, t_{0})}\Big(\frac{\sigma_{1} \beta^{B} k}{2\rho}\Big)\Big(\frac{\sigma_{0}}{2}\Big)^{p(i-j)}.
\end{multline}
Let us estimate the term on the right-hand side of \eqref{eq3.38}. If $a^{+}_{Q_{14 \rho,(14 \rho)^{2}}(x_{0}, t_{0})}\Big(\dfrac{\sigma_{1} \beta^{B} k}{2\rho}\Big)^{q-p}\leqslant 1$, then
\begin{equation}\label{eq3.39}
\varphi^{-}_{Q_{14 \rho,(14 \rho)^{2}}(x_{0}, t_{0})}\Big(\frac{\sigma_{1} \beta^{B} k}{2\rho}\Big)\geqslant \Big(\frac{\sigma_{1} \beta^{B} k}{2\rho}\Big)^{p}\geqslant \frac{1}{2}\varphi^{+}_{Q_{14 \rho,(14 \rho)^{2}}(x_{0}, t_{0})}\Big(\frac{\sigma_{1} \beta^{B} k}{2\rho}\Big).
\end{equation}
On the other hand, if $a^{+}_{Q_{14 \rho,(14 \rho)^{2}}(x_{0}, t_{0})}\Big(\dfrac{\sigma_{1} \beta^{B} k}{2\rho}\Big)^{q-p}\geqslant 1$,
using condition ($A$) and \eqref{eq3.25}, similarly to \eqref{eq3.15} we obtain that 
\begin{equation*}
a^{+}_{Q_{14 \rho,(14 \rho)^{2}}(x_{0}, t_{0})}\leqslant 2a^{-}_{Q_{14 \rho,(14 \rho)^{2}}(x_{0}, t_{0})},
\end{equation*}
provided that $\sigma_{1}$ satisfies \eqref{eq3.16}, i.e.
\begin{equation*}
(\sigma_{1}\beta^{B})^{s-q+2}\gamma_{0}\,\gamma\leqslant \sigma_{1}^{s-q+2}\gamma_{0}\,\gamma \leqslant \frac{1}{4},
\end{equation*}
which holds by possible reducing of $\sigma_{0}$, if needed. Therefore
\begin{equation}\label{eq3.40}
\varphi^{-}_{Q_{14 \rho,(14 \rho)^{2}}(x_{0}, t_{0})}\Big(\frac{\sigma_{1} \beta^{B} k}{2\rho}\Big)\geqslant a^{-}_{Q_{14 \rho,(14 \rho)^{2}}(x_{0}, t_{0})}\Big(\frac{\sigma_{1} \beta^{B} k}{2\rho}\Big)^{q} \geqslant \frac{1}{2}\varphi^{+}_{Q_{14 \rho,(14 \rho)^{2}}(x_{0}, t_{0})}\Big(\frac{\sigma_{1} \beta^{B} k}{2\rho}\Big).
\end{equation}
Collecting \eqref{eq3.38}--\eqref{eq3.40}, we obtain
\begin{equation*}
\varphi^{+}_{Q_{2^{i}6r,(2^{i}6r)^{2}}(\bar{x},t_{i})}\Big(\frac{\sigma^{i}_{0} \beta k}{2^{i}r}\Big)\geqslant
\frac{1}{2}\varphi^{+}_{Q_{14 \rho,(14 \rho)^{2}}(x_{0}, t_{0})}\Big(\frac{\sigma_{1} \beta^{B} k}{2\rho}\Big)\Big(\frac{\sigma_{0}}{2}\Big)^{p(i-j)},\quad i\leqslant j.
\end{equation*}
Similarly to \eqref{eq3.39}, \eqref{eq3.40} we also have
\begin{equation*}
k^{-2}\,\varphi^{+}_{Q_{\rho,\rho^{2}}(x_{0}, t_{0})}\Big(\frac{ k}{\rho}\Big)\geqslant 
(\sigma_{1}\beta^{B}k)^{-2}\,\varphi^{+}_{Q_{\rho,\rho^{2}}(x_{0}, t_{0})}\Big(\frac{\sigma_{1}\beta^{B} k}{\rho}\Big)\geqslant
\frac{1}{2} (\sigma_{1}\beta^{B}k)^{-2}\varphi^{+}_{Q_{14\rho,(14\rho)^{2}}(x_{0}, t_{0})}\Big(\frac{\sigma_{1}\beta^{B} k}{\rho}\Big).
\end{equation*}
 So,
\begin{multline}\label{eq3.41}
t_{j+1}\leqslant t_{0}+\frac{k^{2}}{\varphi^{+}_{Q_{\rho,\rho^{2}}(x_{0}, t_{0})}\big(\frac{ k}{\rho}\big)}+b_{1}\sum\limits_{i=0}^{j}\frac{(\sigma_{0}^{i}\beta k)^{2}}{\varphi^{+}_{Q_{2^{i}6r,(2^{i}6r)^{2}}(\bar{x}, t_{i})}\big(\frac{\sigma_{0}^{i}\beta k}{2^{i}r}\big)}\leqslant \\ \leqslant t_{0}+ 2\frac{(\sigma_{1}\beta^{B}k)^{2}}{\varphi^{+}_{Q_{14\rho,(14\rho)^{2}}(x_{0}, t_{0})}\big(\frac{\sigma_{1}\beta^{B} k}{\rho}\big)}\bigg\{1+b_{1}\sum\limits_{i=0}^{j}\bigg(\frac{2^{p}}{\sigma_{0}^{p-2}}\bigg)^{i-j}\bigg\}\leqslant\\ \leqslant 
t_{0}+2(1+b_{1}\,\gamma)\frac{(\sigma_{1}\beta^{B}k)^{2}}{\varphi^{+}_{Q_{14\rho,(14\rho)^{2}}(x_{0}, t_{0})}\big(\frac{\sigma_{1}\beta^{B} k}{\rho}\big)}.
\end{multline}
Inequalities \eqref{eq3.35}--\eqref{eq3.37} and \eqref{eq3.41} yield
\begin{equation*}
u(x,t)\geqslant \sigma_{1} \beta^{B} k,\quad x\in B_{2\rho}(x_{0}),
\end{equation*}
for all $t_{0}+2(1+b_{1}\,\gamma)\dfrac{(\sigma_{1}\beta^{B}k)^{2}}{\varphi^{+}_{Q_{14\rho,(14\rho)^{2}}(x_{0}, t_{0})}\big(\frac{\sigma_{1}\beta^{B} k}{\rho}\big)}\leqslant t\leqslant t_{0}+b_{2}\dfrac{(\sigma_{1}\beta^{B}  k)^{2}}{\varphi^{+}_{Q_{14 \rho,(14 \rho)^{2}}(x_{0}, t_{0})}(\frac{\sigma_{1} \beta^{B} k}{\rho})}.$
This completes the proof of Theorem \ref{th3.1} with $B_{1}=2(1+b_{1}\,\gamma)$ and $B_{2}=b_{2}$.

\section{Weak Harnack inequality, proof of Theorem \ref{th1.1}}\label{Sec4}

Fix $\xi_{0} \in (0,1)$ depending only on the data to be chosen later. Following Kuusi \cite{Kuu},  we will distinguish two alternative cases:

''Hot'' alternative case: either there exist a time level $\bar{t} \in \left(t_{0}, t_{0}+\frac{\mathcal{I}^{2}}{\varphi^{+}_{{Q_{2\rho,(2\rho)^{2}}(x_{0},t_{0})}}(\frac{\mathcal{I}}{\rho})}\right)$ and a number $\lambda_{0}>1$ such that
\begin{equation}\label{eq4.1}
\big|\big\{B_{2\rho}(x_{0}): u(\cdot, \bar{t}) \geqslant \lambda_{0} \mathcal{I} \big\}\big| \geqslant \lambda^{-\frac{\xi_{0}}{B}}_{0} |B_{2\rho}(x_{0})|,
\end{equation}
or such inequality is violated, i.e. 

''Cold'' alternative case: for all $t\in \left(t_{0}, t_{0}+\frac{\mathcal{I}^{2}}{\varphi^{+}_{{Q_{2\rho,(2\rho)^{2}}(x_{0},t_{0})}}(\frac{\mathcal{I}}{\rho})} \right)$ and for any $\lambda >1$ there holds
\begin{equation}\label{eq4.2}
\big|\big\{B_{2\rho}(x_{0}): u(\cdot, t) \geqslant \lambda \mathcal{I} \big\}\big| \leqslant \lambda^{-\frac{\xi_{0}}{B}} |B_{2\rho}(x_{0})|,
\end{equation}
here $B>1$ is the number defined in Theorem \ref{th3.1} and $\mathcal{I}=\fint\limits_{B_{\rho}(x_{0})} u(x, t_{0}) dx$.

\subsection{Proof of Theorem \ref{th1.1} under the ''hot'' alternative case, condition \eqref{eq4.1}}\label{subsect4.1}
We will use Theorem \ref{th3.1}, so we need to obtain estimate \eqref{eq3.25}, i.e. the following lemma
\begin{lemma}\label{lem4.1}
Let \eqref{eq4.1} hold then 
\begin{equation}\label{eq4.3}
\big(\lambda_{0}^{\xi_{0}} \mathcal{I}\big)^{s} \leqslant \gamma d^{s} \bigg\{\frac{ (\lambda_{0}^{\xi_{0}}\mathcal{I})^{p-2}}{\rho^{n+p}}+a^{+}_{Q_{6\rho,(6\rho)^{2}}(x_{0},t_{0})}\frac{ (\lambda_{0}^{\xi_{0}}\mathcal{I})^{q-2}}{\rho^{n+q}}\bigg\},
\end{equation}
provided that $\mathcal{I}\geqslant C_{*} \rho$. Here $C_{*}$ is the constant defined in \eqref{eq3.1}.
\end{lemma}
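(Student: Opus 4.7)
The plan is to verify the structural condition \eqref{eq3.25} for $k=\lambda_0^{\xi_0}\mathcal I$, with the absolute constant $\gamma_0$ replaced by the data-dependent quantity $\gamma\,d^s$. The strategy is to promote the single-time-slice measure information \eqref{eq4.1} to a lower bound for $u$ on a spacetime cylinder and then test this bound against $d^s=\iint u^s\,dx\,dt$. The bridge between the hot measure information and the $L^s$ norm is the time-propagation Lemma~\ref{lem3.1}.

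First I would apply Lemma~\ref{lem3.1} with $r=2\rho$, $k\mapsto\lambda_0\mathcal I$, initial time $\bar t$, and $\beta_0=\lambda_0^{-\xi_0/B}$; the choices \eqref{eq3.6} then give $\varepsilon\sim\beta_0$ and $\delta\sim\beta_0^{q+1}$. The hypothesis needed is $\eta\leq(2\rho)^2$, which, using $\varphi^+\geq(k/r)^p$, reduces to $\lambda_0\mathcal I\gtrsim\rho$ and follows from $\mathcal I\geq C_*\rho$ after increasing $C_*$ if necessary. Taking $\tau=0$ in \eqref{eq3.5} produces a spacetime set $E\subset B_{2\rho}(x_0)\times(\bar t,\bar t+\eta)$ on which $u\geq c\,\lambda_0^{1-\xi_0/B}\mathcal I$, with $|E|\gtrsim\lambda_0^{-\xi_0(q+2)/B}\rho^n(\lambda_0\mathcal I)^2/\varphi^+_{Q_{12\rho,(12\rho)^2}(x_0,\bar t)}(\lambda_0\mathcal I/(2\rho))$.

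Next, combining $d^s\geq\iint_E u^s\,dx\,dt\geq\bigl(c\,\lambda_0^{1-\xi_0/B}\mathcal I\bigr)^s|E|$ with the identity $\varphi^+(k/\rho)/(k^2\rho^n)=k^{p-2}/\rho^{n+p}+a^+k^{q-2}/\rho^{n+q}$ and a cylinder change for $a^+$ handled by condition~($A$), one obtains
\begin{equation*}
(\lambda_0\mathcal I)^s\leq\gamma\,\lambda_0^{\xi_0(s+q+2)/B}\,d^s\,\Bigl\{\tfrac{(\lambda_0\mathcal I)^{p-2}}{\rho^{n+p}}+a^+_{Q_{6\rho,(6\rho)^2}(x_0,t_0)}\tfrac{(\lambda_0\mathcal I)^{q-2}}{\rho^{n+q}}\Bigr\}.
\end{equation*}
To pass to the form \eqref{eq4.3} involving $\lambda_0^{\xi_0}\mathcal I$, I would factor $(\lambda_0\mathcal I)^s=\lambda_0^{s(1-\xi_0)}(\lambda_0^{\xi_0}\mathcal I)^s$ and, since $0\leq p-2\leq q-2$, use the elementary bound
\begin{equation*}
\tfrac{(\lambda_0\mathcal I)^{p-2}}{\rho^{n+p}}+a^+\tfrac{(\lambda_0\mathcal I)^{q-2}}{\rho^{n+q}}\leq\lambda_0^{(1-\xi_0)(q-2)}\Bigl\{\tfrac{(\lambda_0^{\xi_0}\mathcal I)^{p-2}}{\rho^{n+p}}+a^+\tfrac{(\lambda_0^{\xi_0}\mathcal I)^{q-2}}{\rho^{n+q}}\Bigr\}.
\end{equation*}
The claim then follows provided the residual exponent of $\lambda_0$ is nonpositive, i.e.\ $\xi_0(s+q+2)/B+(1-\xi_0)(q-2)\leq s(1-\xi_0)$, which rearranges to $\xi_0\leq B(s-q+2)/\bigl[(s+q+2)+B(s-q+2)\bigr]$. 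Because \eqref{eq1.5} forces $s-q+2>0$, a suitable $\xi_0$ depending only on the data can be fixed once and for all.

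The main difficulty will be the bookkeeping of the $\beta_0=\lambda_0^{-\xi_0/B}$ powers: the constants $\varepsilon,\delta$ produced by Lemma~\ref{lem3.1} and the measure $|E|$ each contribute small powers of $\lambda_0^{-\xi_0/B}$ that combine nontrivially when $u^s$ is integrated, and one must check that they are dominated by the leading $\lambda_0^s$ coming from $u^s$. The gap $s>q-2$ provided by \eqref{eq1.5} is exactly what makes the absorption possible, and it is the same gap that drives the verification of \eqref{eq3.15} in Section~\ref{Sec3}.
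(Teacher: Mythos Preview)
Your proposal is correct and follows essentially the same route as the paper: propagate the single-slice measure information \eqref{eq4.1} in time via Lemma~\ref{lem3.1}, integrate $u^{s}$ over the resulting spacetime set to compare with $d^{s}$, and then choose $\xi_{0}$ small enough (depending only on the data) so that the residual power of $\lambda_{0}$ is absorbed. The paper carries out the last step by splitting into the two phases $a^{+}(\lambda_{0}\mathcal I/\rho)^{q-p}\gtrless 1$ and treating each separately, whereas you keep the full $\varphi^{+}$ and use the elementary monotonicity $(\lambda_{0}\mathcal I)^{p-2}\leq \lambda_{0}^{(1-\xi_{0})(q-2)}(\lambda_{0}^{\xi_{0}}\mathcal I)^{p-2}$; both routes yield the identical constraint $\xi_{0}\bigl(1+\tfrac{s+q+2}{B(s-q+2)}\bigr)\leq 1$.

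One small point: the ``cylinder change for $a^{+}$'' does not actually require condition~($A$). The paper simply uses the nesting $Q_{4\rho,(4\rho)^{2}}(x_{0},\bar t)\subset Q_{6\rho,(6\rho)^{2}}(x_{0},t_{0})$ (valid because $\bar t-t_{0}\leq \rho^{2}$) and hence $a^{+}_{Q_{4\rho}}\leq a^{+}_{Q_{6\rho}}$. Your application of Lemma~\ref{lem3.1} with $r=2\rho$ formally produces $\varphi^{+}$ on $Q_{12\rho,(12\rho)^{2}}(x_{0},\bar t)$, which is \emph{not} contained in $Q_{6\rho,(6\rho)^{2}}(x_{0},t_{0})$; to match the stated conclusion you should either invoke the sharper cylinder $Q_{r,\eta}$ actually used in the proof of Lemma~\ref{lem3.1}, or observe that the statement with $a^{+}$ on a slightly larger reference cylinder is what is really needed when Theorem~\ref{th3.1} is subsequently applied at scale $2\rho$.
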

\begin{proof}
Inequality $\mathcal{I}\geqslant C_{*} \rho$ yields $\lambda_{0}\mathcal{I} \geqslant C_{*} \rho$, so by Lemma \ref{lem3.1} and conditions \eqref{eq3.6} 
\begin{equation*}
\big|\big\{B_{2\rho}(x_{0}): u(\cdot, t) \geqslant \frac{1}{8}\lambda_{0}^{1-\frac{\xi_{0}}{B}} \mathcal{I} \big\}\big| \geqslant \frac{1}{2}\lambda^{-\frac{\xi_{0}}{B}}_{0} |B_{2\rho}(x_{0})|,
\end{equation*}
for all $t \in( \bar{t}, \bar{t} +\eta)$, $\eta=\gamma^{-1}\dfrac{\lambda_{0}^{-\frac{(q+1)\xi_{0}}{B}}(\lambda_{0}\mathcal{I})^{2}}{\varphi^{+}_{Q_{4\rho,(4\rho)^{2}}(x_{0},\bar{t})}(\frac{\lambda_{0} \mathcal{I}}{\rho})}$.

From this
\begin{equation}\label{eq4.4}
\frac{1}{16} \lambda_{0}^{1-\frac{\xi_{0}}{B}-\frac{\xi_{0}}{Bs}}
|B_{2\rho}(x_{0})|^{\frac{1}{s}}\eta^{\frac{1}{s}} \mathcal{I} \leqslant \,\Biggr(\,\,\iint\limits_{Q_{2\rho,\eta}(x_{0},\bar{t})} u^{s} dx\,d\tau\Biggr)^{\frac{1}{s}}\leqslant d.
\end{equation}
If $a^{+}_{Q_{4\rho,(4\rho)^{2}}(x_{0},\bar{t})}\bigg(\dfrac{\lambda_{0} \mathcal{I}}{\rho}\bigg)^{q-p}\geqslant 1$, then \eqref{eq4.4}
implies
\begin{equation*}
\lambda_{0}^{1-\frac{\xi_{0}(s+q+2)}{B(s-q+2)}}\,\mathcal{I}\leqslant \gamma \big[a^{+}_{Q_{4\rho,(4\rho)^{2}}(x_{0},\bar{t})}\big]^{\frac{1}{s-q+2}}\,\rho^{-\frac{n+q}{s-q+2}} d^{\frac{s}{s-q+2}},
\end{equation*}
and if $1-\frac{\xi_{0}(s+q+2)}{B(s-q+2)} \geqslant \xi_{0} $, i.e. $\xi_{0}(1+\frac{s+q+2}{B(s-q+2)} ) \leqslant 1$, then
\begin{equation*}
\lambda_{0}^{\xi_{0}}\,\mathcal{I} \leqslant \gamma \big[a^{+}_{Q_{4\rho,(4\rho)^{2}}(x_{0},\bar{t})}\big]^{\frac{1}{s-q+2}} d^{\frac{s}{s-q+2}} \rho^{-\frac{n+q}{s-q+2}}.
\end{equation*}
By the fact that $a^{+}_{Q_{4\rho,(4\rho)^{2}}(x_{0},\bar{t})}\leqslant a^{+}_{Q_{6\rho,(6\rho)^{2}}(x_{0},t_{0})}$, this 
 proves inequality \eqref{eq4.3} in the case \\$a^{+}_{Q_{4\rho,(4\rho)^{2}}(x_{0},\bar{t})}\bigg(\dfrac{\lambda_{0} \mathcal{I}}{\rho}\bigg)^{q-p}\geqslant 1$.

And if $a^{+}_{Q_{4\rho,(4\rho)^{2}}(x_{0},\bar{t})}\bigg(\dfrac{\lambda_{0} \mathcal{I}}{\rho}\bigg)^{q-p}\leqslant 1$, then \eqref{eq4.4} yields
\begin{equation*}
\lambda_{0}^{\xi_{0}}\,\mathcal{I}\leqslant \lambda_{0}^{1-\frac{\xi_{0}(s+q+2)}{B(s-p+2)}}\,\mathcal{I}\leqslant \gamma \rho^{-\frac{n+p}{s-p+2}} d^{\frac{s}{s-p+2}},
\end{equation*}
provided that  $1-\frac{\xi_{0}(s+q+2)}{B(s-p+2)} \geqslant \xi_{0} $, i.e. $\xi_{0}(1+\frac{s+q+2}{B(s-p+2)} ) \leqslant 1$. The last inequality proves \eqref{eq4.3} in the case $a^{+}_{Q_{4\rho,(4\rho)^{2}}(x_{0},\bar{t})}\bigg(\dfrac{\lambda_{0} \mathcal{I}}{\rho}\bigg)^{q-p}\leqslant 1$. This completes the proof of the lemma.
\end{proof}

 We use  Theorem \ref{th3.1} with $k=\lambda_{0}^{\xi_{0}} \mathcal{I}$, $\beta=\frac{1}{2}\lambda_{0}^{-\frac{\xi_{0}}{B}}$ and $\tau= \bar{t}$. By Lemma \ref{lem4.1}, inequality \eqref{eq3.25} with $\gamma_{0}=\gamma d^{s}$ of Theorem \ref{th3.1} holds, so, using the fact that $\beta^{B}\,k=\dfrac{\mathcal{I}}{2^{B}}$, we obtain that either $\mathcal{I} \leqslant C\,2^{B}\rho$, or
\begin{equation}\label{eq4.5}
u(x,t)\geqslant \frac{\sigma_{1}}{2^{B}}\,\mathcal{I}=\bar{\sigma}_{1}\,\mathcal{I},\quad x\in B_{4\rho}(x_{0}),
\end{equation}
for all $t_{0}+B_{1}\dfrac{(\bar{\sigma}_{1}\mathcal{I})^{2}}{\varphi^{+}_{Q_{14\rho,(14\rho)^{2}}(x_{0},t_{0})}( \frac{\bar{\sigma}_{1}\mathcal{I}}{\rho})} \leqslant t \leqslant t_{0}+B_{2}\dfrac{(\bar{\sigma}_{1}\mathcal{I})^{2}}{\varphi^{+}_{Q_{14\rho,(14\rho)^{2}}(x_{0},t_{0})}( \frac{\bar{\sigma}_{1}\mathcal{I}}{\rho})}$, which proves Theorem \ref{th1.1} under the ''hot'' alternative case, condition \eqref{eq4.1}.

\subsection{Proof of Theorem \ref{th1.1} under the ''cold'' alternative case, condition \eqref{eq4.2}}\label{subsect4.2}
In what follows, we will assume that
\begin{equation}\label{eq4.6}
\mathcal{I} \geqslant C_{1} \biggr\{\rho + \rho\,\,\psi^{-1}_{Q_{14\rho,(14\rho)^{2}}(x_{0},t_{0})}\bigg(\frac{\rho^{2}}{T-t_{0}}\bigg)\biggr\},
\end{equation}
with some positive $C_{1} >0$ to be chosen later.

First we note that condition \eqref{eq4.2} yields
\begin{multline}\label{eq4.7}
\fint\limits_{B_{2\rho}(x_{0})}u(x,t)^{\varkappa}\,dx= \frac{\varkappa}{|B_{2\rho}(x_{0})|}\int\limits_{0}^{\infty}
|\big\{u > \lambda\big\}| \lambda^{\varkappa-1}d \lambda =\\=\frac{\varkappa \mathcal{I}^{\varkappa}}{|B_{2\rho}(x_{0})|}\int\limits_{0}^{\infty}|\big\{B_{2\rho}(x_{0}) :u > \lambda \mathcal{I}\big\}| \lambda^{\varkappa-1}d \lambda \leqslant \\ \leqslant\mathcal{I}^{\varkappa}
+ \varkappa \mathcal{I}^{\varkappa} \int\limits_{1}^{\infty} \lambda^{\varkappa -\frac{\xi_{0}}{B} -1} d\lambda \leqslant 3 \mathcal{I}^{\varkappa},\qquad \varkappa=\dfrac{\xi_{0}}{2B},
\end{multline}
for all $t \in \bigg(t_{0}, t_{0}+\dfrac{\mathcal{I}^{2}}{\varphi^{+}_{{Q_{2\rho,(2\rho)^{2}}(x_{0},t_{0})}}(\frac{\mathcal{I}}{\rho})}\bigg)$.

The following  lemma is the  uniform upper bound for the super-solutions, which further gives us the possibility to use the expansion of positivity theorem, Theorem \ref{th3.1}.

\begin{lemma}\label{lem4.2}
Fix $l$ by the condition
\begin{equation}\label{eq4.8}
1<l:= \frac{s-p+2}{s-q+2} <\frac{n+p}{n}.
\end{equation}
Then for all $m$ in the range
\begin{equation}\label{eq4.9}
q-2 < m < q-1+ \frac{p-n(l-1)}{l n}
\end{equation}
there holds
\begin{multline}\label{eq4.10}
\frac{\mathcal{I}^{p-2}}{\rho^{n+p}}\iint\limits_{Q_{\frac{7}{4}\rho, \frac{3}{4}\eta}(x_{0},t_{0})} \left(\frac{u}{\mathcal{I}}+1\right)^{m-q+p} dx\, dt +\\+a^{+}_{Q_{2\rho, (2\rho)^{2}}(x_{0},t_{0})}\frac{\mathcal{I}^{q-2}}{\rho^{n+q}}\iint\limits_{Q_{\frac{7}{4}\rho, \frac{3}{4}\eta}(x_{0},t_{0})} \left(\frac{u}{\mathcal{I}}+1\right)^{m} dx\, dt\leqslant \gamma,
\end{multline}
where $\eta=\dfrac{\mathcal{I}^{2}}{\varphi^{+}_{Q_{2\rho, (2\rho)^{2}}(x_{0},t_{0})}\big(\frac{\mathcal{I}}{\rho}\big)}$.
\end{lemma}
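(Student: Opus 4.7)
\textbf{Proof plan for Lemma \ref{lem4.2}.} The plan is to combine a Caccioppoli-type energy inequality (of the same flavor as Lemma \ref{lem2.4}, but with a test function involving a power of $u/\mathcal{I}+1$ rather than a negative power) with the parabolic Sobolev embedding and the $L^{\varkappa}$--bound \eqref{eq4.7} obtained from the ``cold'' alternative. First, I would test the weak formulation \eqref{eq1.4} with $(u_{h}/\mathcal{I}+1)^{\sigma}\zeta_{1}^{q}\zeta_{2}^{q}$ for a suitable $\sigma>0$, let $h\to 0$, and use the structure conditions \eqref{eq1.2} together with the Young inequality. This produces an estimate of the form
\begin{equation*}
\sup_{t}\int_{B_{r}}\!\!\bigl(\tfrac{u}{\mathcal I}+1\bigr)^{\sigma+1}\zeta^{q}\,dx + \gamma^{-1}\!\!\iint_{Q_{r,\eta}}\!\!\bigl(\tfrac{u}{\mathcal I}+1\bigr)^{\sigma-1}\!\bigl(|\nabla u|^{p}+a(x,t)|\nabla u|^{q}\bigr)\zeta^{q}\,dx\,dt\leqslant \text{RHS}_{\sigma},
\end{equation*}
where $\text{RHS}_{\sigma}$ contains only integrals of powers of $u/\mathcal{I}+1$ over the larger cylinder divided by $(\sigma r)^{p}$ and $(\sigma\eta)$, plus the $a^{+}$--weighted analogue.

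Next, I would invoke the parabolic Sobolev embedding on the function $v:=(u/\mathcal{I}+1)^{(\sigma+p-1)/p}$ to convert the $L^{\infty}_{t}L^{\sigma+1}_{x}\cap L^{p}_{t}W^{1,p}_{x}$--bound just produced into an $L^{\sigma+1+p/n}$--bound on $v^{p}$, which is an $L^{(\sigma+p-1)(n+p)/(np)+\text{correction}}$--bound on $u/\mathcal I+1$. The important point is that this embedding gives a genuine gain of integrability by the factor $(n+p)/n$ at each iteration step. Starting from the base exponent $\varkappa=\xi_{0}/(2B)$ provided by \eqref{eq4.7} and iterating this reverse-H\"older step on a sequence of shrinking cylinders $Q_{r_{j},\eta_{j}}\supset Q_{7\rho/4,3\eta/4}$, I would reach any finite exponent $m$ below the critical threshold where the iteration breaks down; the restriction \eqref{eq4.9} on $m$ precisely encodes this threshold.

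The main technical obstacle will be the double-phase term $a(x,t)|\nabla u|^{q}$ that appears both in the right-hand side of the energy inequality (as $(u/\mathcal I+1)^{\sigma+q-1}$) and as a loss in homogeneity compared to the pure $p$-case. To handle it I would apply condition $(A)$ together with Young's inequality with exponents $l=(s-p+2)/(s-q+2)$ and $l/(l-1)=(s-p+2)/(q-p)$, exactly as in the splitting carried out in \eqref{eq3.15}:
\begin{equation*}
a(x,t)\bigl(\tfrac{u}{\mathcal I}+1\bigr)^{\sigma+q-1}\leqslant \tfrac{1}{4}\bigl(\tfrac{u}{\mathcal I}+1\bigr)^{\sigma+p-1}+\gamma\, r^{(\alpha+p-q)l/(l-1)-p}\bigl(\tfrac{u}{\mathcal I}+1\bigr)^{\sigma+s}a^{+}_{Q_{6\rho,(6\rho)^{2}}(x_{0},t_{0})}.
\end{equation*}
The first term is absorbed into the $p$--phase contribution, while the second is controlled by the global $L^{s}$--norm $d$ via $\iint u^{s}\,dx\,dt\leqslant d^{s}$ after checking that the exponent on $r$ is nonnegative (which follows from \eqref{eq1.5}). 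The assumption $l<(n+p)/n$ in \eqref{eq4.8} is exactly what makes $s+\sigma$ accessible by a single pass of Sobolev embedding starting from $\sigma+1<\sigma+p/n+1$, so that the Moser scheme does not exhaust the available integrability prematurely.

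Finally, combining the iterated bound on $\iint(u/\mathcal I+1)^{m}$ with the weights $\mathcal{I}^{p-2}/\rho^{n+p}$ and $a^{+}\mathcal{I}^{q-2}/\rho^{n+q}$ accounts for the scaling carried along throughout the estimate: each appearance of $\mathcal I/\rho$ in the RHS of the energy inequality is a factor of $\varphi^{+}(\mathcal I/\rho)/\mathcal I^{2}$ after division by $\eta$, and this scale factor matches exactly the two prefactors in \eqref{eq4.10}. The expected main difficulty is to arrange the iteration so that the $q$-phase contribution never dominates; this is controlled by the choice of $l$ and by the upper restriction on $m$ in \eqref{eq4.9}, which I expect to emerge naturally from the requirement that the geometric series of exponents built by the embedding stays strictly below $q-1+(p-n(l-1))/(ln)$.
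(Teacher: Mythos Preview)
The proposal has a fundamental gap at its very first step. You propose to test \eqref{eq1.4} with $(u_h/\mathcal I+1)^{\sigma}\zeta^{q}$ for $\sigma>0$, i.e., with an \emph{increasing} function of $u$, and claim this yields the displayed Caccioppoli-type upper bound on $\sup_t\int(\tfrac{u}{\mathcal I}+1)^{\sigma+1}$ and on the weighted gradient integral. For a \emph{super}-solution this is false: in the weak inequality $\int u_t\zeta+\int\mathbb A\cdot\nabla\zeta\geqslant 0$, the term $\sigma(u/\mathcal I+1)^{\sigma-1}\mathbb A\cdot\nabla u$ is nonnegative when $\sigma>0$, so both the time-derivative term and the gradient term end up on the same side with a \emph{lower} bound, not an upper bound. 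What you have written is the energy estimate for sub-solutions, and the Moser scheme built on it cannot be run for super-solutions.

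The paper avoids this obstruction by never testing with a positive power. The only energy estimate used is Lemma~\ref{lem2.4} with the \emph{negative}-power test $(u+\mathcal I)^{-\varepsilon}$, $\varepsilon=1-\varkappa/l\in(0,1)$, which legitimately bounds $\iint(u+\mathcal I)^{-1-\varepsilon}\bigl(|\nabla u|^{p}+a|\nabla u|^{q}\bigr)$ from above. The needed $\sup_t\int(u+\mathcal I)^{\varkappa/l}$ in the Sobolev embedding does not come from any energy estimate at all, but directly from the ``cold'' alternative via \eqref{eq4.7}. Their combination yields \eqref{eq4.17}, and the reverse-H\"older iteration is then run by repeatedly applying Lemma~\ref{lem2.4} with $\varepsilon$ still in $(0,1)$ and re-invoking \eqref{eq4.7} at each step. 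The double-phase term is also handled differently from what you sketch: one does not Young-split $a(x,t)(u/\mathcal I+1)^{\sigma+q-1}$ directly, but writes $a^{+}=a^{-}+(a^{+}-a^{-})$, applies condition~($A$) only to the oscillation, and absorbs the resulting $u^{s}$ integral into $d^{s}$ through \eqref{eq1.5}; see \eqref{eq4.12}--\eqref{eq4.16} and \eqref{eq4.20}--\eqref{eq4.22}.
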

\begin{proof}
Fix $\sigma \in (0,1)$, let $\dfrac{15}{8}\rho<(1-\sigma)r < r < 2\rho$ and let $\zeta(x)\in C^{1}_{0}(B_{r}(x_{0}))$, $0\leqslant \zeta(x) \leqslant 1$, $\zeta(x)=1$ in $B_{(1-\sigma)r}(x_{0})$ and $| \nabla \zeta(x)| \leqslant \dfrac{1}{\sigma r}$. We use Lemma \ref{lem2.4} with
$\varepsilon=1-\frac{\varkappa}{l}$, where $\varkappa$ is the number defined in \eqref{eq4.7}. By the Sobolev embedding theorem and by \eqref{eq4.7}
we obtain
\begin{multline*}
\iint\limits_{Q_{r,\eta}(x_{0}, t_{0})} (u+\mathcal{I})^{p-2+\varkappa \frac{n+p}{ln}} \zeta^{q}(x) dx\, dt \leqslant \\ \leqslant
\gamma\, \bigg(\sup\limits_{t_{0}<t< t_{0}+\eta}\int\limits_{B_{r}(x_{0})}(u+\mathcal{I})^{\frac{\varkappa}{l}} dx\bigg)^{\frac{p}{n}}\iint\limits_{Q_{r,\eta}(x_{0}, t_{0})}(u+\mathcal{I})^{-2 +\frac{\varkappa}{l}} |\nabla (u\zeta^{\frac{q}{p}}(x)|^{p} dx\, dt
\leqslant \\ \leqslant \gamma \rho^{p} \mathcal{I}^{\frac{\varkappa p}{ln}}\iint\limits_{Q_{r,\eta}(x_{0}, t_{0})}(u+\mathcal{I})^{-2 +\frac{\varkappa}{l}} |\nabla (u\zeta^{\frac{q}{p}}(x)|^{p} dx\, dt \leqslant \gamma \sigma^{-q}\rho^{p} \mathcal{I}^{\frac{\varkappa p}{ln}} \times \\ \times\left\{\frac{1}{\rho^{p}}\iint\limits_{Q_{r,\eta}(x_{0}, t_{0})} (u+\mathcal{I})^{p-2+\frac{\varkappa}{l}}dx\,dt +\frac{a^{+}_{Q_{2\rho,(2\rho)^{2}}(x_{0}, t_{0})}}{\rho^{q}}\iint\limits_{Q_{r,\eta}(x_{0}, t_{0})} (u+\mathcal{I})^{q-2+\frac{\varkappa}{l}}dx\,dt\right\},
\end{multline*}
which  yields
\begin{multline}\label{eq4.11}
\frac{\mathcal{I}^{p-2}}{\rho^{n+p}}\iint\limits_{Q_{r,\eta}(x_{0}, t_{0})} \left(\frac{u}{\mathcal{I}}+1\right)^{p-2+\varkappa \frac{n+p}{ln}}\zeta^{q}(x) dx\, dt \leqslant \\ \leqslant
 \gamma \sigma^{-q}\left\{\frac{\mathcal{I}^{p-2}}{\rho^{n+p}}\iint\limits_{Q_{r,\eta}(x_{0}, t_{0})} \left(\frac{u}{\mathcal{I}}+1\right)^{p-2+\frac{\varkappa}{l}} dxdt + \right.\\
\left. +a^{+}_{Q_{2\rho, (2\rho)^{2}}(x_{0},t_{0})}\frac{\mathcal{I}^{q-2}}{\rho^{n+q}}\iint\limits_{Q_{r,\eta}(x_{0}, t_{0})} \left(\frac{u}{\mathcal{I}}+1\right)^{q-2+\frac{\varkappa}{l}} dxdt\right\}.
\end{multline}
By condition ($A$) we have
\begin{multline}\label{eq4.12}
a^{+}_{Q_{2\rho,(2\rho)^{2}}(x_{0}, t_{0})}\iint\limits_{Q_{r,\eta}(x_{0}, t_{0})} (u+\mathcal{I})^{q-2+\varkappa\frac{n+p}{ln}}\zeta^{q}(x)dxdt \leqslant \\ \leqslant \gamma a^{+}_{Q_{2\rho,(2\rho)^{2}}(x_{0}, t_{0})}\,\,|Q_{r,\eta}(x_{0}, t_{0})|\,\,\mathcal{I}^{q-2+\varkappa\frac{n+p}{ln}}+\\+a^{-}_{Q_{2\rho,(2\rho)^{2}}(x_{0}, t_{0})}\iint\limits_{Q_{r,\eta}(x_{0}, t_{0})} \negthickspace u^{q-2+\varkappa\frac{n+p}{ln}}\zeta^{q}(x)dxdt+\\+ \gamma \rho^{\alpha}\iint\limits_{Q_{r,\eta}(x_{0}, t_{0})} \negthickspace u^{q-2+\varkappa\frac{n+p}{ln}}\zeta^{q}(x)dxdt.
\end{multline}
Let us estimate the terms on the right-hand side of \eqref{eq4.12}. Similarly to \eqref{eq4.11}
\begin{multline*}
a^{-}_{Q_{2\rho,(2\rho)^{2}}(x_{0}, t_{0})}\iint\limits_{Q_{r,\eta}(x_{0}, t_{0})} (u+\mathcal{I})^{q-2+\varkappa\frac{n+p}{ln}}dxdt \leqslant \\ \leqslant \gamma \rho^{q-p} \bigg(\sup\limits_{t_{0}<t< t_{0}+\eta}\int\limits_{B_{r}(x_{0})}(u+\mathcal{I})^{\frac{\varkappa}{l}} dx\bigg)^{\frac{p}{n}} \times\\\times\iint\limits_{Q_{r,\eta}(x_{0}, t_{0})}a(x,t)(u+\mathcal{I})^{-2 +\frac{\varkappa}{l}} |\nabla (u\zeta(x))|^{q} dx\, dt \leqslant \\  \leqslant \gamma \rho^{q} \mathcal{I}^{\frac{\varkappa p}{ln}}\iint\limits_{Q_{r,\eta}(x_{0}, t_{0})}a(x,t) (u+\mathcal{I})^{-2 +\frac{\varkappa}{l}} |\nabla (u\zeta(x))|^{q} dx\, dt \leqslant \gamma \sigma^{-q}\rho^{q} \mathcal{I}^{\frac{\varkappa p}{ln}} \times\\\times \left\{\frac{1}{\rho^{p}}\iint\limits_{Q_{r,\eta}(x_{0}, t_{0})} (u+\mathcal{I})^{p-2+\frac{\varkappa}{l}}dxdt +\frac{a^{+}_{Q_{2\rho,(2\rho)^{2}}(x_{0}, t_{0})}}{\rho^{q}}\iint\limits_{Q_{r,\eta}(x_{0}, t_{0})} (u+\mathcal{I})^{q-2+\frac{\varkappa}{l}}dxdt\right\}.
\end{multline*}
Evidently we have
\begin{multline*}
a^{+}_{Q_{2\rho,(2\rho)^{2}}(x_{0}, t_{0})}\,\,|Q_{r,\eta}(x_{0}, t_{0})|\,\,\mathcal{I}^{q-2+\varkappa\frac{n+p}{ln}}\leqslant\\ \leqslant
\gamma\rho^{q} \mathcal{I}^{\frac{\varkappa p}{ln}} \left\{\frac{1}{\rho^{p}}\iint\limits_{Q_{r,\eta}(x_{0}, t_{0})} (u+\mathcal{I})^{p-2+\frac{\varkappa}{l}}dxdt +\frac{a^{+}_{Q_{2\rho,(2\rho)^{2}}(x_{0}, t_{0})}}{\rho^{q}}\iint\limits_{Q_{r,\eta}(x_{0}, t_{0})} (u+\mathcal{I})^{q-2+\frac{\varkappa}{l}}dxdt\right\},
\end{multline*}
which together with \eqref{eq4.12} yield
\begin{multline}\label{eq4.13}
a^{+}_{Q_{2\rho,(2\rho)^{2}}(x_{0}, t_{0})}\frac{\mathcal{I}^{q-2}}{\rho^{n+q}}\iint\limits_{Q_{r,\eta}(x_{0}, t_{0})} \left(\frac{u}{\mathcal{I}}+1\right)^{q-2+\varkappa\frac{n+p}{ln}}\zeta^{q}(x)dxdt \leqslant \\ \leqslant \gamma \sigma^{-q}\bigg\{\frac{\mathcal{I}^{p-2}}{\rho^{n+p}}\iint\limits_{Q_{r,\eta}(x_{0}, t_{0})} \left(\frac{u}{\mathcal{I}}+1\right)^{p-2+\frac{\varkappa}{l}}dxdt  +\\+ a^{+}_{Q_{2\rho, (2\rho)^{2}}(x_{0},t_{0})}\frac{\mathcal{I}^{q-2}}{\rho^{n+q}}\iint\limits_{Q_{r,\eta}(x_{0}, t_{0})} \left(\frac{u}{\mathcal{I}}+1\right)^{q-2+\frac{\varkappa}{l}} dxdt\bigg\}+\\+ \gamma \rho^{\alpha-q-n}\iint\limits_{Q_{r,\eta}(x_{0}, t_{0})}\left(\frac{u}{\mathcal{I}}\right)^{\varkappa\frac{n+p}{ln}}\,\,u^{q-2}\zeta^{q}(x)dxdt.
\end{multline}
To estimate the last term on the right-hand side of \eqref{eq4.13} we use the Young inequality
\begin{multline}\label{eq4.14}
\rho^{\alpha-q-n}\iint\limits_{Q_{r,\eta}(x_{0}, t_{0})}\left(\frac{u}{\mathcal{I}}\right)^{\varkappa\frac{n+p}{ln}}\,\,u^{q-2}\zeta^{q}(x)dxdt
=\\=\rho^{\alpha-q-n}\iint\limits_{Q_{r,\eta}(x_{0}, t_{0})}\left(\frac{u}{\mathcal{I}}\right)^{\varkappa\frac{n+p}{ln}}\,\,u^{\frac{p-2}{l}+ \frac{(p-2)(l-1)}{l}+q-p}\zeta^{q}(x)dxdt \leqslant \\ \leqslant \frac{\gamma}{\rho^{n+p}}\iint\limits_{Q_{r,\eta}(x_{0}, t_{0})}\left(\frac{u}{\mathcal{I}}+1\right)^{\varkappa\frac{n+p}{n}} (u+\mathcal{I})^{p-2} \zeta^{q}(x) dx\, dt +\\
+\gamma \rho^{(\alpha+p-q)\frac{l}{l-1}-n-p}\iint\limits_{Q_{r,\eta}(x_{0}, t_{0})}\,\,u^{p-2+\frac{(q-p)l}{l-1}} dxdt.
\end{multline}
The first integral on the right-hand side of \eqref{eq4.14} we estimate similarly to \eqref{eq4.11}
\begin{multline}\label{eq4.15}
    \frac{1}{\rho^{n+p}}\iint\limits_{Q_{r,\eta}(x_{0}, t_{0})}\left(\frac{u}{\mathcal{I}}+1\right)^{\varkappa\frac{n+p}{n}} (u+\mathcal{I})^{p-2} \zeta^{q}(x) dx\, dt =\\=\frac{\mathcal{I}^{p-2}}{\rho^{n+p}}\iint\limits_{Q_{r,\eta}(x_{0}, t_{0})}\left(\frac{u}{\mathcal{I}}+1\right)^{p-2+\varkappa\frac{n+p}{n}} \zeta^{q}(x) dx\, dt   \leqslant
\\ \leqslant \gamma \sigma^{-q}\bigg\{\frac{\mathcal{I}^{p-2}}{\rho^{n+p}}\iint\limits_{Q_{r,\eta}(x_{0}, t_{0})} \left(\frac{u}{\mathcal{I}}+1\right)^{p-2+\varkappa}dxdt+
\\+ a^{+}_{Q_{2\rho, (2\rho)^{2}}(x_{0},t_{0})}\frac{\mathcal{I}^{q-2}}{\rho^{n+q}}\iint\limits_{Q_{r,\eta}(x_{0}, t_{0})} \left(\frac{u}{\mathcal{I}}+1\right)^{q-2+\varkappa} dxdt\bigg\}.
\end{multline}
 By our choice of $l$ we estimate the last term on the right-hand side of \eqref{eq4.14} as follows
\begin{multline}\label{eq4.16}
\rho^{(\alpha+p-q)\frac{l}{l-1}-n-p}\iint\limits_{Q_{r,\eta}(x_{0}, t_{0})}u^{p-2+\frac{(q-p)l}{l-1}} dxdt = \\ = \gamma \rho^{(\alpha+p-q)\frac{s-p+2}{q-p}-n-p}\iint\limits_{Q_{r,\eta}(x_{0}, t_{0})}u^{s} dxdt \leqslant \gamma \iint\limits_{Q_{r,\eta}(x_{0}, t_{0})}u^{s} dxdt \leqslant \gamma d^{s}.
\end{multline}
So, collecting estimates \eqref{eq4.11}--\eqref{eq4.16} we arrive at
\begin{multline*}
J_{\sigma}:=\frac{\mathcal{I}^{p-2}}{\rho^{n+p}}\iint\limits_{Q_{(1-\sigma)r,\eta}(x_{0}, t_{0})} \left(\frac{u}{\mathcal{I}}+1\right)^{p-2+\varkappa \frac{n+p}{ln}} dx\, dt +\\+ a^{+}_{Q_{2\rho,(2\rho)^{2}}(x_{0}, t_{0})}\frac{\mathcal{I}^{q-2}}{\rho^{n+q}}\iint\limits_{Q_{(1-\sigma) r,\eta}(x_{0}, t_{0})} \left(\frac{u}{\mathcal{I}}+1\right)^{q-2+\varkappa\frac{n+p}{ln}} dxdt \leqslant\\ \leqslant \gamma \sigma^{-\gamma}
\bigg\{\frac{\mathcal{I}^{p-2}}{\rho^{n+p}}\iint\limits_{Q_{r,\eta}(x_{0}, t_{0})} \left(\frac{u}{\mathcal{I}}+1\right)^{p-2+\varkappa} dx\, dt + \\
+a^{+}_{Q_{2\rho,(2\rho)^{2}}(x_{0}, t_{0})}\frac{\mathcal{I}^{q-2}}{\rho^{n+q}}\iint\limits_{Q_{ r,\eta}(x_{0}, t_{0})} \left(\frac{u}{\mathcal{I}}+1\right)^{q-2+\varkappa}dx\, dt +\\+ \frac{\mathcal{I}^{p-2}}{\rho^{n+p}}\iint\limits_{Q_{r,\eta}(x_{0}, t_{0})} \left(\frac{u}{\mathcal{I}}+1\right)^{p-2+\frac{\varkappa}{l}} dx\, dt + \\
+a^{+}_{Q_{2\rho,(2\rho)^{2}}(x_{0}, t_{0})}\frac{\mathcal{I}^{q-2}}{\rho^{n+q}}\iint\limits_{Q_{ r,\eta}(x_{0}, t_{0})} \left(\frac{u}{\mathcal{I}}+1\right)^{q-2+\frac{\varkappa}{l}}dx\, dt +\gamma\bigg\},
\end{multline*}
which by the Young inequality with any $\epsilon \in (0,1)$  yields
\begin{equation*}
J_{\sigma}\leqslant \epsilon J_{0} +\gamma \sigma^{-\gamma}\epsilon^{-\gamma},
\end{equation*}
from which by iteration we obtain
\begin{equation*}
\frac{\mathcal{I}^{p-2}}{\rho^{n+p}}\iint\limits_{Q_{\frac{15}{8}\rho,\eta}(x_{0}, t_{0})} \left(\frac{u}{\mathcal{I}}+1\right)^{p-2+\varkappa \frac{n+p}{ln}} dx\, dt + \qquad\qquad\qquad\qquad\qquad\qquad\qquad\qquad\qquad\qquad\qquad\qquad\qquad
\end{equation*}
\begin{equation}\label{eq4.17}
 + a^{+}_{Q_{2\rho,(2\rho)^{2}}(x_{0}, t_{0})}\frac{\mathcal{I}^{q-2}}{\rho^{n+q}}\iint\limits_{Q_{\frac{15}{8}\rho,\eta}(x_{0}, t_{0})} \left(\frac{u}{\mathcal{I}}+1\right)^{q-2+\varkappa\frac{n+p}{ln}} dxdt \leqslant \gamma.
\end{equation}
To complete the proof of the lemma we need to obtain the reverse H\"{o}lder inequality. Define the number
$\bar{\varkappa} \leqslant \varkappa$ by the condition
\begin{equation*}
(m-q+2) \bigg(\frac{ln}{n+p}\bigg)^{j+1} =\bar{\varkappa},
\end{equation*}
in this setting
\begin{multline*}
\bar{\varkappa}\bigg(\frac{n+p}{ln}\bigg)^{i}=(m -q+2)\bigg(\frac{ln}{n+p}\bigg)^{j+1-i} <\\<
(1+\frac{p-n(l-1)}{ln})\bigg(\frac{ln}{n+p}\bigg)^{j+1-i}=\bigg(\frac{ln}{n+p}\bigg)^{j-i}\leqslant 1,\,\,1\leqslant i\leqslant j.
\end{multline*}
We use Lemma \ref{lem2.4} with $\varepsilon= \dfrac{\bar{\varkappa}}{l}\bigg(\dfrac{n+p}{ln}\bigg)^{i} $ for the pair of cylinders
\\$Q_{i}:=B_{i}\times(t_{0}, t_{0}+\eta_{i})$ and $Q_{i+1}$, $B_{i}:=B_{\rho_{i}}(x_{0})$, $\eta_{i}=\dfrac{\mathcal{I}^{2}}{\varphi^{+}_{Q_{2\rho, (2\rho)^{2}}(x_{0}, t_{0})}\big(\frac{\mathcal{I}}{\rho_{i}}\big)}$, \\$\rho_{i}:=\dfrac{15}{8} \rho(1-\dfrac{1}{15}\dfrac{1-2^{-i+1}}{1-2^{-(j+1)}})$, $i=1,...,j$. Choose $\zeta_{1}(x) \in C^{1}_{0}(B_{i})$,
$\zeta_{1}(x)=1$ in $B_{i+1}$, $0\leqslant \zeta_{1}(x) \leqslant 1$, $|\nabla \zeta_{1}(x)| \leqslant \gamma \dfrac{2^{i}}{\rho}$ and $\zeta_{2}(t)\in C^{1}(\mathbb{R}_{+})$, $0\leqslant \zeta_{2}(t)\leqslant 1$, $\zeta_{2}(t)=1$ for $t\leqslant t_{0}+ \eta_{i+1}$, $\zeta_{2}=0$ for $t \geqslant t_{0}+\eta_{i}$, $\bigg| \dfrac{d}{dt}\zeta_{2}(t)\bigg| \leqslant \gamma 2^{i\gamma}\dfrac{\varphi^{+}_{Q_{2\rho, (2\rho)^{2}}(x_{0}, t_{0})}\big(\frac{\mathcal{I}}{\rho}\big)}{\mathcal{I}^{2}}$.

By the Sobolev embedding theorem, choosing $y_{i}:=p-2 +\bar{\varkappa}\bigg(\dfrac{n+p}{ln}\bigg)^{i}$, \\$z_{i}:=q-2 +\bar{\varkappa}\bigg(\dfrac{n+p}{ln}\bigg)^{i}$ we obtain
\begin{multline*}
\iint\limits_{Q_{i+1}} (u+\mathcal{I})^{y_{i+1}} dxdt \leqslant \gamma \bigg(\sup\limits_{t_{0}<t<t_{0}+\eta_{i}}\int\limits_{B_{i}}(u+\mathcal{I})^{\frac{\bar{\varkappa}}{l}(\frac{n+p}{ln})^{i}}(\zeta_{1}(x)\zeta_{2}(t))^{q} dx\bigg)^{\frac{p}{n}} \times\\\times\iint\limits_{Q_{i}}(u+\mathcal{I})^{-2+\frac{\bar{\varkappa}}{l}(\frac{n+p}{ln})^{i}}|\big(\nabla u (\zeta_{1}(x) \zeta_{2}(t))^{\frac{q}{p}}\big)|^{p} dx\, dt
\leqslant\\ \leqslant \gamma 2^{\gamma i} \bigg\{\frac{\varphi^{+}_{Q_{2\rho, (2\rho)^{2}}(x_{0}, t_{0})}\big(\frac{\mathcal{I}}{\rho}\big)}{\mathcal{I}^{2}}\iint\limits_{Q_{i}}(u+\mathcal{I})^{\frac{\bar{\varkappa}}{l}(\frac{n+p}{ln})^{i}} dx\, dt +\rho^{-p}\iint\limits_{Q_{i}}(u+\mathcal{I})^{p-2+\frac{\bar{\varkappa}}{l}(\frac{n+p}{ln})^{i}} dx\, dt +\\+ \frac{a^{+}_{Q_{2\rho,(2\rho)^{2}}(x_{0},t_{0})}}{\rho^{q}}\iint\limits_{Q_{i}}(u+\mathcal{I})^{q-2+\frac{\bar{\varkappa}}{l}(\frac{n+p}{ln})^{i}} dx\, dt \bigg\}^{1+\frac{p}{n}}\leqslant\\ \leqslant \gamma 2^{\gamma i}\bigg\{\rho^{-p}\iint\limits_{Q_{i}}(u+\mathcal{I})^{p-2+\frac{\bar{\varkappa}}{l}(\frac{n+p}{ln})^{i}} dx\, dt + \frac{a^{+}_{Q_{2\rho,(2\rho)^{2}}(x_{0},t_{0})}}{\rho^{q}}\iint\limits_{Q_{i}}(u+\mathcal{I})^{q-2+\frac{\bar{\varkappa}}{l}(\frac{n+p}{ln})^{i}} dx\, dt \bigg\}^{1+\frac{p}{n}},
\end{multline*}
which by the H\"{o}lder inequality and  \eqref{eq4.17} yields
\begin{multline}\label{eq4.18}
\frac{\mathcal{I}^{p-2}}{\rho^{n+p}}\iint\limits_{{Q_{i+1}}} \left(\frac{u}{\mathcal{I}}+1\right)^{y_{i+1}} dxdt \leqslant
\gamma 2^{\gamma i}\bigg\{\frac{\mathcal{I}^{p-2}}{\rho^{n+p}}\iint\limits_{{Q_{i}}} \left(\frac{u}{\mathcal{I}}+1\right)^{p-2+\frac{\bar{\varkappa}}{l}(\frac{n+p}{ln})^{i}} dxdt +\\+ a^{+}_{Q_{2\rho,(2\rho)^{2}}(x_{0},t_{0})}\frac{\mathcal{I}^{q-2}}{\rho^{n+q}}\iint\limits_{Q_{i}}\left(\frac{u}{\mathcal{I}}+1\right)^{q-2+\frac{\bar{\varkappa}}{l}(\frac{n+p}{ln})^{i}} dx\, dt\bigg\}^{1+\frac{p}{n}} \leqslant \\ \leqslant \gamma 2^{\gamma i}\bigg\{\frac{\mathcal{I}^{p-2}}{\rho^{n+p}}\bigg(\iint\limits_{{Q_{i}}} \left(\frac{u}{\mathcal{I}}+1\right)^{p-2+\bar{\varkappa}(\frac{n+p}{ln})^{i}} dxdt\bigg)^{\frac{1}{l}}\bigg(\iint\limits_{{Q_{i}}} \left(\frac{u}{\mathcal{I}}+1\right)^{p-2} dxdt\bigg)^{1-\frac{1}{l}} +\\+ a^{+}_{Q_{2\rho,(2\rho)^{2}}(x_{0},t_{0})}\frac{\mathcal{I}^{q-2}}{\rho^{n+q}}\bigg(\iint\limits_{Q_{i}}\left(\frac{u}{\mathcal{I}}+1\right)^{q-2+\bar\varkappa(\frac{n+p}{ln})^{i}} dx\, dt\bigg)^{\frac{1}{l}}\bigg(\iint\limits_{Q_{i}}\left(\frac{u}{\mathcal{I}}+1\right)^{q-2} dx\, dt\bigg)^{1-\frac{1}{l}}\bigg\}^{1+\frac{p}{n}} \leqslant \\
\leqslant \gamma 2^{\gamma i}\bigg\{\frac{\mathcal{I}^{p-2}}{\rho^{n+p}}\iint\limits_{{Q_{i}}} \negthickspace \left(\frac{u}{\mathcal{I}}+1\right)^{y_{i}} \negthickspace dxdt+a^{+}_{Q_{2\rho,(2\rho)^{2}}(x_{0},t_{0})}\frac{\mathcal{I}^{q-2}}{\rho^{n+q}}\iint\limits_{Q_{i}}\negthickspace\left(\frac{u}{\mathcal{I}}+1\right)^{z_{i}} \negthickspace dx\, dt\bigg\}^{\frac{n+p}{ln}} .
\end{multline}
Similarly,
\begin{multline*}
a^{-}_{Q_{2\rho, (2\rho)^{2}}(x_{0},t_{0})}\iint\limits_{Q_{i+1}} (u+\mathcal{I})^{y_{i+1}} dxdt \leqslant \\ \leqslant \gamma \rho^{q-p}\bigg(\sup\limits_{t_{0}<t<t_{0}+\eta_{i}}\int\limits_{B_{i}}(u+\mathcal{I})^{\frac{\bar{\varkappa}}{l}(\frac{n+p}{ln})^{i}}(\zeta_{1}(x)\zeta_{2}(t))^{q} dx\bigg)^{\frac{p}{n}} \times \\ \times \iint\limits_{Q_{i}}a(x,t)(u+\mathcal{I})^{-2+\frac{\bar{\varkappa}}{l}(\frac{n+p}{ln})^{i}} |\big(\nabla u \zeta_{1}(x) \zeta_{2}(t)\big)|^{q} dx\, dt \leqslant \\
\leqslant \gamma 2^{\gamma i} \bigg\{\frac{\varphi^{+}_{Q_{2\rho, (2\rho)^{2}}(x_{0}, t_{0})}\big(\frac{\mathcal{I}}{\rho}\big)}{\mathcal{I}^{2}}\times\\
\times\iint\limits_{Q_{i}}(u+\mathcal{I})^{\frac{\bar{\varkappa}}{l}(\frac{n+p}{ln})^{i}} dx\, dt + \rho^{-p}\iint\limits_{Q_{i}}(u+\mathcal{I})^{p-2+\frac{\bar{\varkappa}}{l}(\frac{n+p}{ln})^{i}} dx\, dt + \\
+\frac{a^{+}_{Q_{2\rho,(2\rho)^{2}}(x_{0},t_{0})}}{\rho^{q}}\iint\limits_{Q_{i}}(u+\mathcal{I})^{q-2+\frac{\bar{\varkappa}}{l}(\frac{n+p}{ln})^{i}} dx\, dt \bigg\}^{1+\frac{p}{n}}
\leqslant \\ \leqslant \gamma 2^{\gamma i}\bigg\{\rho^{-p}\iint\limits_{Q_{i}}(u+\mathcal{I})^{p-2+\frac{\bar{\varkappa}}{l}(\frac{n+p}{ln})^{i}} dx\, dt + \\
+\frac{a^{+}_{Q_{2\rho,(2\rho)^{2}}(x_{0},t_{0})}}{\rho^{q}}\iint\limits_{Q_{i}}(u+\mathcal{I})^{q-2+\frac{\bar{\varkappa}}{l}(\frac{n+p}{ln})^{i}} dx\, dt \bigg\}^{1+\frac{p}{n}},
\end{multline*}
which by the H\"{o}lder inequality and  \eqref{eq4.17} yields
\begin{multline}\label{eq4.19}
a^{-}_{Q_{2\rho,(2\rho)^{2}}(x_{0},t_{0})}\frac{\mathcal{I}^{q-2}}{\rho^{n+q}}\iint\limits_{Q_{i+1}}\left(\frac{u}{\mathcal{I}}+1\right)^{z_{i+1}} dx\, dt\leqslant\\ \leqslant \gamma 2^{\gamma i}\bigg\{\frac{\mathcal{I}^{p-2}}{\rho^{n+p}}\iint\limits_{{Q_{i}}} \left(\frac{u}{\mathcal{I}}+1\right)^{y_{i}} dxdt+a^{+}_{Q_{2\rho,(2\rho)^{2}}(x_{0},t_{0})}\frac{\mathcal{I}^{q-2}}{\rho^{n+q}}\iint\limits_{Q_{i}}\left(\frac{u}{\mathcal{I}}+1\right)^{z_{i}} dx\, dt\bigg\}^{\frac{n+p}{ln}} .
\end{multline}
Furthermore, by condition ($A$)
\begin{multline}\label{eq4.20}
a^{+}_{Q_{2\rho,(2\rho)^{2}}(x_{0},t_{0})}\frac{\mathcal{I}^{q-2}}{\rho^{n+q}}\iint\limits_{Q_{i+1}}\Big(\frac{u}{\mathcal{I}}+1\Big)^{z_{i+1}} dx\, dt\leqslant \gamma a^{+}_{Q_{2\rho,(2\rho)^{2}}(x_{0},t_{0})}\frac{\mathcal{I}^{q-2}}{\rho^{n+q}} |Q_{i+1}|+\\ + \gamma a^{-}_{Q_{2\rho,(2\rho)^{2}}(x_{0},t_{0})}\frac{\mathcal{I}^{q-2}}{\rho^{n+q}}\iint\limits_{Q_{i+1}}\Big(\frac{u}{\mathcal{I}}\Big)^{z_{i+1}} dx\, dt +\rho^{\alpha-n-q}\iint\limits_{Q_{i+1}}\left(\frac{u}{\mathcal{I}}\right)^{\bar{\varkappa}(\frac{n+p}{ln})^{i+1}}u^{q-2} dxdt. 
\end{multline}
The first term on the right-hand side of \eqref{eq4.20} we estimate using the Young inequality
\begin{multline}\label{eq4.21}
 a^{+}_{Q_{2\rho,(2\rho)^{2}}(x_{0},t_{0})}\frac{\mathcal{I}^{q-2}}{\rho^{n+q}} |Q_{i+1}|\leqslant 
\gamma  \frac{a^{+}_{Q_{2\rho,(2\rho)^{2}}(x_{0},t_{0})}}{\rho^{n+q}}
\iint\limits_{Q_{i}}\left(\frac{u}{\mathcal{I}}+1\right)^{\bar{\varkappa}(\frac{n+p}{ln})^{i}}(u+\mathcal{I})^{q-2} dx\, dt=\\=\gamma a^{+}_{Q_{2\rho,(2\rho)^{2}}(x_{0},t_{0})}\frac{\mathcal{I}^{q-2}}{\rho^{n+q}}\iint\limits_{Q_{i}}\left(\frac{u}{\mathcal{I}}+1\right)^{z_{i}} dx\, dt
\leqslant \\ \leqslant \gamma \Big\{1+a^{+}_{Q_{2\rho,(2\rho)^{2}}(x_{0},t_{0})}\frac{\mathcal{I}^{q-2}}{\rho^{n+q}}\iint\limits_{Q_{i}}\left(\frac{u}{\mathcal{I}}+1\right)^{z_{i}} dx\, dt\Big\}^{\frac{n+p}{ln}}.
\end{multline}
To estimate the last term on the right-hand side of \eqref{eq4.20} we use the H\"{o}lder inequality, by our choice of $l$
\begin{multline*}
\rho^{\alpha-n-q}\iint\limits_{Q_{i+1}}\left(\frac{u}{\mathcal{I}}\right)^{\bar{\varkappa}(\frac{n+p}{ln})^{i+1}}u^{q-2} dxdt=\\=\rho^{\alpha-n-q}\iint\limits_{Q_{i+1}}\left(\frac{u}{\mathcal{I}}\right)^{\bar{\varkappa}(\frac{n+p}{ln})^{i+1}}\,u^{\frac{p-2}{l}+\frac{(p-2)(l-1)}{l}+q-p} dxdt\leqslant \\ \leqslant \gamma \rho^{\alpha-n-q}\bigg(\iint\limits_{Q_{i+1}}\left(\frac{u}{\mathcal{I}}+1\right)^{\bar{\varkappa}\frac{n+p}{n}(\frac{n+p}{ln})^{i}}(u+\mathcal{I})^{p-2} dxdt\bigg)^{\frac{1}{l}} \bigg(\iint\limits_{Q_{i+1}}u^{s} dx\, dt\bigg)^{\frac{l-1}{l}}\leqslant \\ \leqslant \gamma \rho^{\alpha+p-q-\frac{(n+p)(q-p)}{s-p+2}}\bigg(\frac{\mathcal{I}^{p-2}}{\rho^{n+p}}\iint\limits_{Q_{i+1}}\left(\frac{u}{\mathcal{I}}+1\right)^{p-2+\bar{\varkappa}\frac{n+p}{n}(\frac{n+p}{ln})^{i}} dxdt\bigg)^{\frac{1}{l}}\leqslant\\\leqslant 
\gamma \bigg(\frac{\mathcal{I}^{p-2}}{\rho^{n+p}}\iint\limits_{Q_{i+1}}\left(\frac{u}{\mathcal{I}}+1\right)^{p-2+\bar{\varkappa}\frac{n+p}{n}(\frac{n+p}{ln})^{i}} dxdt\bigg)^{\frac{1}{l}}.
\end{multline*}
The integral on the right-hand side of this inequality we estimate similarly to \eqref{eq4.18}
\begin{multline*}
\bigg(\frac{\mathcal{I}^{p-2}}{\rho^{n+p}}\iint\limits_{Q_{i+1}}\left(\frac{u}{\mathcal{I}}+1\right)^{p-2+\bar{\varkappa}\frac{n+p}{n}(\frac{n+p}{ln})^{i}} dxdt\bigg)^{\frac{1}{l}} \leqslant\\ \leqslant \gamma 2^{\gamma i}\bigg\{\frac{\mathcal{I}^{p-2}}{\rho^{n+p}}\iint\limits_{{Q_{i}}} \left(\frac{u}{\mathcal{I}}+1\right)^{y_{i}} dxdt+a^{+}_{Q_{2\rho,(2\rho)^{2}}(x_{0},t_{0})}\frac{\mathcal{I}^{q-2}}{\rho^{n+q}}\iint\limits_{Q_{i}}\left(\frac{u}{\mathcal{I}}+1\right)^{z_{i}} dx \,dt\bigg\}^{\frac{n+p}{ln}} .
\end{multline*}
Therefore, inequalities \eqref{eq4.19}--\eqref{eq4.21} yield
\begin{multline}\label{eq4.22}
a^{+}_{Q_{2\rho,(2\rho)^{2}}(x_{0},t_{0})}\frac{\mathcal{I}^{q-2}}{\rho^{n+q}}\iint\limits_{Q_{i+1}}\Big(\frac{u}{\mathcal{I}}+1\Big)^{z_{i+1}} dx\, dt\leqslant\\ \leqslant\gamma 2^{\gamma i}\bigg\{1+\frac{\mathcal{I}^{p-2}}{\rho^{n+p}}\iint\limits_{{Q_{i}}} \left(\frac{u}{\mathcal{I}}+1\right)^{y_{i}} dxdt+a^{+}_{Q_{2\rho,(2\rho)^{2}}(x_{0},t_{0})}\frac{\mathcal{I}^{q-2}}{\rho^{n+q}}\iint\limits_{Q_{i}}\left(\frac{u}{\mathcal{I}}+1\right)^{z_{i}} dx \,dt\bigg\}^{\frac{n+p}{ln}} .
\end{multline}
Collecting estimates \eqref{eq4.18}, \eqref{eq4.22} we arrive at
\begin{multline*}
J_{i+1}:=\bigg(\frac{\mathcal{I}^{p-2}}{\rho^{n+p}}\iint\limits_{{Q_{i+1}}} \left(\frac{u}{\mathcal{I}}+1\right)^{y_{i+1}} dxdt +a^{+}_{Q_{2\rho,(2\rho)^{2}}(x_{0},t_{0})}\frac{\mathcal{I}^{q-2}}{\rho^{n+q}}\iint\limits_{Q_{i+1}}\left(\frac{u}{\mathcal{I}}+1\right)^{z_{i+1}} dx\, dt\bigg)^
{(\frac{l n}{n+p})^{i+1}} \leqslant \\ \leqslant  \gamma 2^{\gamma i(\frac{l n}{n+p})^{i+1}} \big(1+J_{i}\big),\quad i=1,2,...,j.
\end{multline*}

From this  and \eqref{eq4.17}, after a finite number of iterations, we obtain
\begin{equation*}
J_{j+1}\leqslant \gamma(j) \big(1+ J_{1}\big)\leqslant \gamma,
\end{equation*}
 which proves \eqref{eq4.10}. This  completes the proof of the lemma.
\end{proof}

Next is the gradient estimate for the super-solution.
\begin{lemma}\label{lem4.3}
For all $\delta \in (0,\dfrac{5}{8})$ there holds
\begin{equation}\label{eq4.23}
\frac{1}{\rho}\iint\limits_{Q_{\frac{13}{8}\rho,\delta \eta}(x_{0}, t_{0})} |\nabla u|^{p-1} dx\, dt +
\frac{1}{\rho}\iint\limits_{Q_{\frac{13}{8}\rho,\delta \eta}(x_{0}, t_{0})} a(x,t) |\nabla u|^{q-1} dx\, dt \leqslant \gamma \delta^{\frac{\varepsilon}{p(1+2\varepsilon)}} \mathcal{I}\rho^{n},
\end{equation}
where $\eta=\dfrac{\mathcal{I}^{2}}{\varphi^{+}_{Q_{2\rho, (2\rho)^{2}}(x_{0},t_{0})}\big(\frac{\mathcal{I}}{\rho}\big)}$, $\varepsilon=\dfrac{lp-n(l-1)}{4(q-1)ln}$ and $l=\dfrac{s-p+2}{s-q+2}$.
\end{lemma}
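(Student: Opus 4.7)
\textbf{Proposal for the proof of Lemma~\ref{lem4.3}.}
The plan is to combine the logarithmic energy estimate of Lemma~\ref{lem2.4} with the higher-integrability bound of Lemma~\ref{lem4.2}, and to extract the small factor $\delta^{\varepsilon/(p(1+2\varepsilon))}$ from the time-volume of the reduced cylinder via a H\"older interpolation. Throughout I will work on the sub-cylinder $Q'=Q_{13\rho/8,\delta\eta}(x_0,t_0)$ contained in $Q_{7\rho/4,\eta}(x_0,t_0)$, where Lemma~\ref{lem4.2} is applicable.

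First, I would apply Lemma~\ref{lem2.4} with a small auxiliary parameter $\varepsilon'\in(0,1)$ (eventually of the same order as $\varepsilon$) and $\delta$ in the lemma replaced by $\mathcal{I}$, on the pair of cylinders $Q_{7\rho/4,\eta}\subset Q_{2\rho,\eta}$. This yields the two weighted energy bounds
\begin{equation*}
\iint_{Q_{7\rho/4,\eta}} |\nabla u|^{p} (u+\mathcal{I})^{-1-\varepsilon'}\,dx\,dt \;+\; \iint_{Q_{7\rho/4,\eta}} a(x,t)|\nabla u|^{q} (u+\mathcal{I})^{-1-\varepsilon'}\,dx\,dt \;\leqslant\; \gamma \,\mathsf{M},
\end{equation*}
where $\mathsf{M}$ stands for a sum of integrals of the form $\rho^{-p}\iint(u+\mathcal{I})^{p-1-\varepsilon'}$ and $\rho^{-q}a^{+}\iint(u+\mathcal{I})^{q-1-\varepsilon'}$, plus a $(\eta)^{-1}\iint(u+\mathcal{I})^{1-\varepsilon'}$ term. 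Because $\eta\asymp \rho^{2}/\psi^{+}(\mathcal{I}/\rho)$, these three pieces are of comparable size and, by Lemma~\ref{lem4.2}, are bounded by a data-dependent constant times $\mathcal{I}^{p}\rho^{n}\,\mathcal{I}^{-\varepsilon'-1}$ once one also absorbs the scaling factors. Crucially, this estimate is independent of $\delta$, since it is obtained on the full-time slab $Q_{\cdot,\eta}$.

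Next, I would restrict to the sub-cylinder $Q'$ and apply H\"older's inequality with three exponents $\tfrac{p}{p-1}$, $\frac{1}{\theta_2}$, $\frac{1}{\theta_3}$ satisfying $\tfrac{p-1}{p}+\theta_2+\theta_3=1$:
\begin{equation*}
\iint_{Q'}|\nabla u|^{p-1}\,dx\,dt \leqslant \left(\iint_{Q'}|\nabla u|^{p}(u+\mathcal{I})^{-1-\varepsilon'}\right)^{\frac{p-1}{p}} \left(\iint_{Q'}(u+\mathcal{I})^{\mu}\right)^{\theta_2} |Q'|^{\theta_3},
\end{equation*}
with the weight exponent $\mu=(1+\varepsilon')(p-1)/(p\theta_2)$ chosen so that the powers of $(u+\mathcal{I})$ balance. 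I would select $\theta_3=\varepsilon/(p(1+2\varepsilon))$, which forces $\theta_2=\tfrac{1+\varepsilon}{p(1+2\varepsilon)}$ and $\mu$ to fall (for $\varepsilon'$ chosen slightly below $\varepsilon$) inside the admissible range of Lemma~\ref{lem4.2} determined by the restriction \eqref{eq4.9}. The bound on the first factor comes from the previous paragraph, on the second factor from Lemma~\ref{lem4.2}, and the third factor produces the explicit $\delta^{\theta_3}$; a routine accounting of the powers of $\mathcal{I}$ and $\rho$ yields the claimed $\gamma\,\delta^{\varepsilon/(p(1+2\varepsilon))}\mathcal{I}\rho^{n}$ on the right-hand side. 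The integral $\iint_{Q'} a|\nabla u|^{q-1}$ is handled analogously: write the integrand as $a^{1/q}\cdot a^{(q-1)/q}|\nabla u|^{q-1}$, H\"older with exponents $q/(q-1)$, $1/\theta_2'$, $1/\theta_3'$, and absorb the $a^{+}$ weight via Lemma~\ref{lem4.2} to again obtain the $\delta^{\varepsilon/(p(1+2\varepsilon))}\mathcal{I}\rho^{n}$ bound.

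The principal obstacle is the bookkeeping: one has to verify that a single choice of $(\varepsilon',\theta_2,\theta_3)$ simultaneously satisfies (a) the restriction $\mu\in(p-2,p-1+\tfrac{p-n(l-1)}{ln})$ required for Lemma~\ref{lem4.2}; (b) the analogous constraint for the second integral with $q$ in place of $p$, which produces the factor $(q-1)$ in the denominator of $\varepsilon=\tfrac{lp-n(l-1)}{4(q-1)ln}$; and (c) a clean arithmetic identity so that the final power of $\mathcal{I}\rho^{n}$ carries weight exactly $1$ on the right. The factor $4$ in the denominator of $\varepsilon$ essentially arises from the freedom to shrink $\varepsilon'$ further so that $\mu$ remains strictly below the critical exponent after we take a H\"older power, leaving room for the small loss in exponents in both the $p$- and $q$-phase computations.
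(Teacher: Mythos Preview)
Your proposal is correct and follows essentially the same strategy as the paper: combine the weighted energy estimate of Lemma~\ref{lem2.4} with the higher integrability of Lemma~\ref{lem4.2}, and use a H\"older interpolation to pull out the factor $\delta^{\varepsilon/(p(1+2\varepsilon))}$ from the time measure of the short cylinder. Your single three-exponent H\"older step is equivalent to the paper's two-step version (first H\"older with exponents $\tfrac{p}{p-1},p$, then interpolate $(1+\varepsilon)(p-1)$ between $(1+2\varepsilon)(p-1)$ and $0$), and with $\varepsilon'=\varepsilon$ your weight exponent $\mu$ becomes exactly $(1+2\varepsilon)(p-1)$, which is precisely the exponent the paper feeds into Lemma~\ref{lem4.2}; so the auxiliary parameter $\varepsilon'$ is unnecessary and the ``bookkeeping obstacle'' you flag dissolves once you set $\varepsilon'=\varepsilon$.
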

\begin{proof}
By the H\"{o}lder inequality  we have
\begin{multline*}
\frac{1}{\rho}\iint\limits_{Q_{\frac{13}{8}\rho,\delta \eta}(x_{0}, t_{0})}| \nabla u|^{p-1} dxdt +  \frac{1}{\rho}\iint\limits_{Q_{\frac{13}{8}\rho,\delta \eta}(x_{0}, t_{0})} a(x,t) |\nabla u|^{q-1} dx\, dt \leqslant \\ \leqslant \gamma \frac{1}{\rho}\bigg(\iint\limits_{Q_{\frac{13}{8}\rho,\frac{5}{8} \eta}(x_{0}, t_{0})}\left(\frac{u}{\mathcal{I}}+1\right)^{-1-\varepsilon}| \nabla u|^{p} dxdt\bigg)^{\frac{p-1}{p}}\times\\\times\bigg(\iint\limits_{Q_{\frac{13}{8}\rho,\delta \eta}(x_{0}, t_{0})}\left(\frac{u}{\mathcal{I}}+1\right)^{(1+\varepsilon)(p-1)}dx\, dt\bigg)^{\frac{1}{p}}+\\+ \gamma\frac{1}{\rho} \bigg(\iint\limits_{Q_{\frac{13}{8}\rho,\frac{5}{8} \eta}(x_{0}, t_{0})}\negthickspace\negthickspace a(x,t)\left(\frac{u}{\mathcal{I}}+1\right)^{-1-\varepsilon}| \nabla u|^{q} dxdt\bigg)^{\frac{q-1}{q}}\times\\\times  \bigg(\iint\limits_{Q_{\frac{13}{8}\rho,\delta \eta}(x_{0}, t_{0})}\negthickspace\negthickspace a(x,t)\left(\frac{u}{\mathcal{I}}+1\right)^{(1+\varepsilon)(q-1)}dx\, dt\bigg)^{\frac{1}{q}}.
\end{multline*}
By Lemma \ref{lem4.2}
\begin{multline*}
\iint\limits_{Q_{\frac{13}{8}\rho,\delta \eta}(x_{0}, t_{0})}\left(\frac{u}{\mathcal{I}}+1\right)^{(1+\varepsilon)(p-1)}dx\, dt \leqslant\\\leqslant\gamma
\bigg(\iint\limits_{Q_{\frac{13}{8}\rho,\frac{5}{8} \eta}(x_{0}, t_{0})}\negthickspace\negthickspace\negthickspace\left(\frac{u}{\mathcal{I}}+1\right)^{(1+2\varepsilon)(p-1)}dx\, dt\bigg)^{\frac{1+\varepsilon}{1+2\varepsilon}}(\delta \rho^{n}\eta)^{\frac{\varepsilon}{1+2\varepsilon}} \leqslant
\gamma \delta^{\frac{\varepsilon}{1+2\varepsilon}} \rho^{n+p}\mathcal{I}^{2-p},
\end{multline*}
and
\begin{multline*}
\iint\limits_{Q_{\frac{13}{8}\rho,\delta \eta}(x_{0}, t_{0})}a(x,t)\left(\frac{u}{\mathcal{I}}+1\right)^{(1+\varepsilon)(q-1)}dx\, dt
\leqslant\\\leqslant a^{+}_{Q_{2\rho,(2\rho)^{2}}(x_{0},t_{0})}\iint\limits_{Q_{\frac{13}{8}\rho,\delta \eta}(x_{0}, t_{0})}\left(\frac{u}{\mathcal{I}}+1\right)^{(1+\varepsilon)(q-1)}dx\, dt \leqslant \gamma \delta^{\frac{\varepsilon}{1+2\varepsilon}} \rho^{n+q} \mathcal{I}^{2-q}.
\end{multline*}
By Lemma \ref{lem2.4} with the appropriate choice of $\zeta_{1}(x)$, $\zeta_{2}(t)$, $| \nabla \zeta_{1}(x)| \leqslant \dfrac{8}{\rho}$, $|\dfrac{d}{dt} \zeta_{2}(t)| \leqslant \dfrac{8}{\eta}$ we obtain
\begin{multline*}
\iint\limits_{Q_{\frac{13}{8}\rho,\frac{5}{8} \eta}(x_{0}, t_{0})}\left(\frac{u}{\mathcal{I}}+1\right)^{-1-\varepsilon}| \nabla u|^{p} dxdt+
\iint\limits_{Q_{\frac{13}{8}\rho,\frac{5}{8} \eta}(x_{0}, t_{0})}a(x,t)\left(\frac{u}{\mathcal{I}}+1\right)^{-1-\varepsilon}| \nabla u|^{q} dxdt\leqslant\\
\leqslant \gamma \bigg(\frac{\mathcal{I}^{2}}{\eta}\iint\limits_{Q_{\frac{7}{4}\rho,\frac{3}{4} \eta}(x_{0}, t_{0})}\left(\frac{u}{\mathcal{I}}+1\right)^{1-\varepsilon}dxdt 
+\frac{\mathcal{I}^{p}}{\rho^{p}}\iint\limits_{Q_{\frac{7}{4}\rho,\frac{3}{4} \eta}(x_{0}, t_{0})}\left(\frac{u}{\mathcal{I}}+1\right)^{p-1-\varepsilon}dxdt+\\
+a^{+}_{Q_{2\rho,(2\rho)^{2}}(x_{0},t_{0})}
\frac{\mathcal{I}^{q}}{\rho^{q}}\iint\limits_{Q_{\frac{7}{4}\rho,\frac{3}{4} \eta}(x_{0}, t_{0})}\left(\frac{u}{\mathcal{I}}+1\right)^{q-1-\varepsilon}dxdt \bigg) \leqslant \\ \leqslant \gamma\bigg(\frac{\mathcal{I}^{p}}{\rho^{p}}\iint\limits_{Q_{\frac{7}{4}\rho,\frac{3}{4} \eta}(x_{0}, t_{0})}\left(\frac{u}{\mathcal{I}}+1\right)^{p-1-\varepsilon}dxdt+ \\
+a^{+}_{Q_{2\rho,(2\rho)^{2}}(x_{0},t_{0})}\frac{\mathcal{I}^{q}}{\rho^{q}}\iint\limits_{Q_{\frac{7}{4}\rho,\frac{3}{4} \eta}(x_{0}, t_{0})}\left(\frac{u}{\mathcal{I}}+1\right)^{q-1-\varepsilon}dxdt \bigg),
\end{multline*}
which by Lemma \ref{lem4.2} yields
\begin{multline*}
\iint\limits_{Q_{\frac{13}{8}\rho,\frac{5}{8} \eta}(x_{0}, t_{0})}\left(\frac{u}{\mathcal{I}}+1\right)^{-1-\varepsilon}| \nabla u|^{p} dxdt+\\+
\iint\limits_{Q_{\frac{13}{8}\rho,\frac{5}{8} \eta}(x_{0}, t_{0})}a(x,t)\left(\frac{u}{\mathcal{I}}+1\right)^{-1-\varepsilon}| \nabla u|^{q} dxdt\leqslant \gamma \mathcal{I}^{2} \rho^{n}.
\end{multline*}
Collecting the previous inequalities we arrive at the required \eqref{eq4.23}, which completes the proof of the lemma.
\end{proof}

The following lemma gives us the possibility to prove inequality \eqref{eq3.26} from Theorem \ref{th3.1}.
\begin{lemma}\label{lem4.4}
There exists $\delta_{0}\in (0,\dfrac{5}{8})$ such that
\begin{equation}\label{eq4.24}
\inf\limits_{t_{0}< t< t_{0}+\delta_{0}\eta}\fint\limits_{B_{\frac{3}{2}\rho}(x_{0})} u(x,t) dx \geqslant \frac{\mathcal{I}}{2^{n+1}},
\end{equation}
where $\eta=\dfrac{\mathcal{I}^{2}}{\varphi^{+}_{Q_{2\rho,(2\rho)^{2}}(x_{0},t_{0})}(\frac{\mathcal{I}}{\rho})}$.
\end{lemma}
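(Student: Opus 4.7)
The natural plan is to test the weak super-solution inequality \eqref{eq1.3} against a purely spatial cut-off and to control the resulting flux term by the gradient estimate of Lemma~\ref{lem4.3}.

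Concretely, pick $\zeta=\zeta(x)\in C^{1}_{0}(B_{\frac{3}{2}\rho}(x_{0}))$ with $0\leqslant\zeta\leqslant 1$, $\zeta\equiv 1$ on $B_{\rho}(x_{0})$, and $|\nabla\zeta|\leqslant \gamma/\rho$. Since $\zeta$ is independent of time, \eqref{eq1.3} with $t_{1}=t_{0}$ and $t_{2}=t$ collapses to
\begin{equation*}
\int u(x,t)\zeta(x)\,dx \geqslant \int u(x,t_{0})\zeta(x)\,dx \,-\, \int_{t_{0}}^{t}\!\int \mathbb{A}(x,\tau,\nabla u)\cdot\nabla\zeta(x)\,dx\,d\tau.
\end{equation*}
Using $u\geqslant 0$, $0\leqslant\zeta\leqslant 1$ and the support/value properties of $\zeta$ one obtains
\begin{equation*}
\int u(x,t)\zeta\,dx \leqslant \int_{B_{\frac{3}{2}\rho}(x_{0})}\!\! u(x,t)\,dx, \qquad \int u(x,t_{0})\zeta\,dx \geqslant \int_{B_{\rho}(x_{0})}\!\! u(x,t_{0})\,dx = \mathcal{I}\,|B_{\rho}(x_{0})|.
\end{equation*}

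For the flux, the structural bound on $\mathbb{A}$ from \eqref{eq1.2} together with $|\nabla\zeta|\leqslant \gamma/\rho$ and $\mathrm{supp}\,\nabla\zeta \subset B_{\frac{3}{2}\rho}(x_{0}) \subset B_{\frac{13}{8}\rho}(x_{0})$ yield, for $t\in(t_{0},t_{0}+\delta\eta)$ with any $\delta\in(0,\tfrac{5}{8})$,
\begin{equation*}
\left|\int_{t_{0}}^{t}\!\int \mathbb{A}\cdot\nabla\zeta\,dx\,d\tau\right| \leqslant \frac{\gamma}{\rho}\iint_{Q_{\frac{13}{8}\rho,\delta\eta}(x_{0},t_{0})}\!\!\bigl(|\nabla u|^{p-1}+a(x,\tau)|\nabla u|^{q-1}\bigr)dx\,d\tau \leqslant \gamma\,\delta^{\frac{\varepsilon}{p(1+2\varepsilon)}}\,\mathcal{I}\,\rho^{n},
\end{equation*}
where the last inequality is precisely Lemma~\ref{lem4.3}. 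Combining the three displays and dividing by $|B_{\frac{3}{2}\rho}(x_{0})|=(3/2)^{n}|B_{\rho}(x_{0})|$ gives
\begin{equation*}
\fint_{B_{\frac{3}{2}\rho}(x_{0})} u(x,t)\,dx \geqslant \Bigl[(2/3)^{n} - \gamma\,\delta^{\frac{\varepsilon}{p(1+2\varepsilon)}}\Bigr]\mathcal{I}.
\end{equation*}

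Finally I fix $\delta_{0}\in(0,\tfrac{5}{8})$ so small (depending only on the data) that $\gamma\,\delta_{0}^{\varepsilon/(p(1+2\varepsilon))} \leqslant (2/3)^{n} - 2^{-(n+1)}$; this is possible because $(2/3)^{n}>2^{-n}>2^{-(n+1)}$. The resulting inequality holds uniformly in $t\in(t_{0},t_{0}+\delta_{0}\eta)$, giving \eqref{eq4.24}. There is no real obstacle: the whole argument is a single testing step plus Lemma~\ref{lem4.3}; the only fact to keep track of is that the flux control degenerates like a positive power of $\delta$, which is exactly what allows us to absorb it into the quantitatively positive margin $(2/3)^{n}-2^{-(n+1)}$.
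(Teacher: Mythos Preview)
Your proof is correct and follows essentially the same route as the paper: test the super-solution inequality with a time-independent cut-off $\zeta$ supported in $B_{\frac{3}{2}\rho}(x_{0})$ and equal to $1$ on $B_{\rho}(x_{0})$, then absorb the flux term via Lemma~\ref{lem4.3} and choose $\delta_{0}$ small. The only cosmetic differences are that the paper tests with $\zeta^{q}$ rather than $\zeta$ (immaterial, since $\zeta\in C^{1}_{0}$ already lies in $W^{1,q}_{0}$) and does not spell out the explicit margin $(2/3)^{n}-2^{-(n+1)}$ as you do.
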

\begin{proof}
Let $\zeta(x) \in C^{1}_{0}(B_{\frac{3}{2}\rho}(x_{0}))$, $\zeta(x)=1$ in $B_{\rho}(x_{0})$, $0 \leqslant \zeta(x) \leqslant 1$,
$|\nabla \zeta(x)| \leqslant \dfrac{2}{\rho}$, test \eqref{eq1.4} by $ \zeta^{q}(x)$, then by Lemma \ref{lem4.3} we obtain for any $\delta \in \big(0,\frac{5}{8}\big)$
\begin{multline*}
\int\limits_{B_{\rho}(x_{0})} u(x, t_{0}) dx \leqslant \int\limits_{B_{\frac{3}{2}\rho}(x_{0})} u(x,t) dx +\frac{\gamma}{\rho}
\iint\limits_{Q_{\frac{3}{2}\rho,\delta \eta}(x_{0},t_{0})} |\nabla u|^{p-1} dxdt +\\+\frac{\gamma}{\rho}
\iint\limits_{Q_{\frac{3}{2}\rho,\delta \eta}(x_{0},t_{0})} a(x,t)|\nabla u|^{q-1} dxdt \leqslant \\ \leqslant   \int\limits_{B_{\frac{3}{2}\rho}(x_{0})} u(x,t) dx +\gamma \delta^{\frac{\varepsilon}{p(1+2\varepsilon)}}\mathcal{I} \rho^{n},\quad t_{0}<t<t_{0}+\delta\eta,
\end{multline*}
from which the required \eqref{eq4.24} follows, provided that $\delta$ is small enough.
\end{proof}

To complete the proof of Theorem \ref{th1.1} we note that by the H\"{o}lder inequality and Lemma \ref{lem4.3}
\begin{multline*}
\underset{Q_{\frac{3}{2}\rho, \delta_{0}\eta}}{\fint\fint}\negthickspace\negthickspace\negthickspace\left(\frac{u}{\mathcal{I}}+1\right)^{m}\negthickspace dxdt \leqslant
\gamma \frac{\mathcal{I}^{p-2}}{\delta_{0} \rho^{n+p}}\iint\limits_{Q_{\frac{3}{2}\rho, \delta_{0}\eta}}\negthickspace\negthickspace\negthickspace\left(\frac{u}{\mathcal{I}}+1\right)^{m}\negthickspace dxdt+\\+\gamma a^{+}_{Q_{2\rho, (2\rho)^{2}}(x_{0},t_{0})} \frac{\mathcal{I}^{q-2}}{\delta_{0} \rho^{n+q}}\iint\limits_{Q_{\frac{3}{2}\rho, \delta_{0}\eta}}\negthickspace\negthickspace\negthickspace\left(\frac{u}{\mathcal{I}}+1\right)^{m} \negthickspace dxdt \leqslant \frac{\gamma}{\delta_{0}}
\end{multline*}
with some $m >1$. This inequality implies the existence of $\bar{t} \in (t_{0}, t_{0} +\delta_{0} \eta)$ such that
\begin{equation}\label{eq4.25}
\fint\limits_{B_{\frac{3}{2}\rho}(x_{0})} u^{m}(x, \bar{t}) dx \leqslant \frac{\gamma}{\delta_{0}} \mathcal{I}^{m}.
\end{equation}
Therefore
\begin{multline*}
\frac{\mathcal{I}}{2^{n+1}} \leqslant \fint\limits_{B_{\frac{3}{2}\rho}(x_{0})}u(x,\bar{t}) dx =\\=\fint\limits_{B_{\frac{3}{2}\rho}(x_{0})
\cap\{u(\cdot,\bar{t})\leqslant \frac{\mathcal{I}}{2^{n+2}}\}}u(x,\bar{t}) dx+\fint\limits_{B_{\frac{3}{2}\rho}(x_{0})\cap\{u(\cdot,\bar{t})\geqslant \frac{\mathcal{I}}{2^{n+2}}\}}u(x,\bar{t}) dx \leqslant \\ \leqslant
\frac{\mathcal{I}}{2^{n+2}} + \Big(\fint\limits_{B_{\frac{3}{2}\rho}(x_{0})} u^{m}(x, \bar{t}) dx\Big)^{\frac{1}{m}}
\left(\frac{|\big\{B_{\frac{3}{2}\rho}(x_{0}): u(\cdot, \bar{t}) \geqslant\frac{\mathcal{I}}{2^{n+2}}\big\}|}{|B_{\frac{3}{2}\rho}(x_{0})|}\right)^{1-\frac{1}{m}},
\end{multline*}
which together with \eqref{eq4.25} yields
\begin{equation}\label{eq4.26}
\left|\left\{B_{\frac{3}{2}\rho}(x_{0}): u(\cdot, \bar{t}) >\frac{\mathcal{I}}{2^{n+2}}\right\}\right| \geqslant \gamma(\delta_{0})|B_{\frac{3}{2}\rho}(x_{0})|,\quad \gamma(\delta_{0}) \in (0,1).
\end{equation}
Inequality \eqref{eq3.25}  of Theorem \ref{th3.1} similarly to Lemma \ref{lem4.1}  follows from \eqref{eq4.26}, so, Theorem \ref{th3.1} is applicable. Using Theorem \ref{th3.1} with $\beta=\gamma(\delta_{0})$, from \eqref{eq4.26} we arrive at
\begin{equation}\label{eq4.27}
u(x,t)\geqslant \frac{ \gamma(\delta_{0})^{B}\sigma_{1}}{2^{n+2}}\,\mathcal{I}=\bar{\sigma}^{*}_{1}\,\mathcal{I},\quad x\in B_{4\rho}(x_{0}),
\end{equation}
provided that $C_{1}\geqslant \dfrac{2^{n+2} C}{\gamma(\delta_{0})^{B}}$ and for  all time levels
$$t_{0}+\bar{B}^{*}_{1}\dfrac{(\bar{\sigma}^{*}_{1}\mathcal{I})^{2}}{\varphi^{+}_{Q_{14\rho,(14\rho)^{2}}(x_{0},t_{0})}( \frac{\bar{\sigma}^{*}_{1}\mathcal{I}}{\rho})} \leqslant t \leqslant t_{0} +\bar{B}^{*}_{2}\dfrac{(\bar{\sigma}^{*}_{1}\mathcal{I})^{2}}{\varphi^{+}_{Q_{14\rho,(14\rho)^{2}}(x_{0},t_{0})}(\frac{\bar{\sigma}^{*}_{1}\mathcal{I}}{\rho})}.$$

A further application of the expansion of positivity Theorem \ref{th3.1} implies from \eqref{eq4.5} and \eqref{eq4.27} the bound below \eqref{eq1.7}, with new constant depending only on the data,  for all times $t$ as in \eqref{eq1.8}.
This completes the proof of Theorem \ref{th1.1}.

\vskip3.5mm
{\bf Acknowledgements.} The authors are partially supported by a grant of the National Academy of Sciences of Ukraine (project number is 0120U100178), and by the European Union as part of a MSCA4Ukraine  Postdoctoral Fellowship.

\newpage
CONTACT INFORMATION

\medskip
\textbf{Mariia O.~Savchenko}  \\Technische Universi{\"a}t Braunschweig,\\
\indent Universit{\"a}tsplatz 2, 38106 Braunschweig, Germany
\\Institute of Applied Mathematics and Mechanics,
National Academy of Sciences of Ukraine, \\ \indent Heneral Batiuk Str. 19, 84116 Sloviansk, Ukraine\\
shan\underline{ }maria@ukr.net

\medskip
\textbf{Igor I.~Skrypnik}\\Institute of Applied Mathematics and Mechanics,
National Academy of Sciences of Ukraine, \\ \indent Heneral Batiuk Str. 19, 84116 Sloviansk, Ukraine\\
Vasyl' Stus Donetsk National University,
\\ \indent 600-richcha Str. 21, 21021 Vinnytsia, Ukraine\\ihor.skrypnik@gmail.com

\medskip
\textbf{Yevgeniia A. Yevgenieva}
\\ Max Planck Institute for Dynamics of Complex Technical Systems, \\ \indent Sandtorstrasse 1, 39106 Magdeburg, Germany
\\Institute of Applied Mathematics and Mechanics,
National Academy of Sciences of Ukraine, \\ \indent Heneral Batiuk Str. 19, 84116 Sloviansk, Ukraine\\yevgeniia.yevgenieva@gmail.com


\begin{thebibliography}{}

\bibitem{Alk}
Yu.\,A.~Alkhutov,
The Harnack inequality and the H\"{o}lder property of solutions of nonlinear elliptic equations
with a nonstandard growth condition (Russian),
Differ. Uravn. \textbf{33} (1997), no.~12, 1651--1660;
translation in Differential Equations \textbf{33} (1997), no.~12, 1653--1663 (1998).

\bibitem{AlkSur}
Yu. A. Alkhutov, M. D. Surnachev,
H\"{o}lder Continuity and Harnack's Inequality
for $p(x)$-Harmonic Functions, Proceedings of the Steklov Inst. of Math.,  \textbf{308}  (2020),   1--21.

\bibitem{ArrHue}
W. Arriagada, J. Huentutripay,
A Harnack inequality in Orlicz-Sobolev spaces, Stud. Math. \textbf{243} (2) (2018), 117--137.

\bibitem{BarColMin1}
P.~Baroni, M.~Colombo, G.~Mingione,
Harnack inequalities for double phase functionals,
Nonlinear Anal. \textbf{121} (2015), 206--222.

\bibitem{BarColMin2}
P.~Baroni, M.~Colombo, G.~Mingione,
Non-autonomous functionals, borderline cases and related
function classes,
St. Petersburg Math. J. \textbf{27} (2016), 347--379.

\bibitem{BarColMin3}
P. Baroni, M. Colombo, G. Mingione,
Regularity for general functionals with double phase, Calc. Var.
Partial Differential Equations \textbf{57}  (2018), no. 2, 1--48.

\bibitem{BenHarHasKar}
A. Benyaiche, P. Harjulehto, P. H\"{a}st\"{o}, A. Karppinen,
The weak Harnack inequality for unbounded supersolutions
of equations with generalized Orlicz growth, J. of Diff. Equations \textbf{275}  (2021), 790--814.

\bibitem{BurSkr}
K.\,O. Buryachenko, I.\,I. Skrypnik,
Local continuity and Harnack's inequality for double-phase parabolic equations,
 Potential Analysis \textbf{56}  (2020), 137--164.
 
\bibitem{ColMin1}
M.~Colombo, G.~Mingione,
Bounded minimisers of double phase variational integrals,
Arch. Rational Mech. Anal.  \textbf{218} (2015), no. 1, 219--273.

\bibitem{ColMin2}
M.~Colombo, G.~Mingione, Regularity for double phase variational problems,
Arch. Rational Mech. Anal.  \textbf{215} (2015), no. 2, 443--496.

\bibitem{ColMin3}
M. Colombo, G. Mingione,
Calderon-Zygmund estimates and non-uniformly elliptic operators,
J. Funct. Anal. \textbf{270} (2016), 1416--1478.

\bibitem{CupMarMas}
G. Cupini, P. Marcellini, E. Mascolo,
Local boundedness of minimizers with limit growth conditions,
J. Optim. Theory Appl. \textbf{166} (2015), 1--22.

\bibitem{DiB}
E. DiBenedetto, Degenerate Parabolic Equations, Springer, New York, 1993.

\bibitem{DiBGiaVes1}
E. DiBenedetto, U. Gianazza, V. Vespri,
 Harnack Estimates for Quasi-Linear Degenerate Parabolic Differential Equation.
Acta Math. \textbf{200} (2008), 181--209.

\bibitem{DiBGiaVes2}
E. DiBenedetto, U. Gianazza, V. Vespri,
Forward,
Backward and Elliptic Harnack Inequalities for Non-Negative Solutions
to Certain Singular Parabolic Partial Differential Equations, Ann Scuola Norm Sup Pisa Cl Sci (5), 9  (2010), 385--422.

\bibitem{DiBGiaVes3}
E. DiBenedetto, U. Gianazza, V. Vespri,
Harnack's inequality for Degenerate and Singular Parabolic Equations. New York:
Springer; 2012.

\bibitem{DiBTru}
E. DiBenedetto and N.S. Trudinger, Harnack Inequalities for Quasi-Minima of
Variational Integrals, Annales de l'I.H.P. Analyse non lin\'{e}aire, Volume 1 (1984) no. 4, pp. 295-308.

\bibitem{HadSkrVoi}
O. V. Hadzhy, I. I. Skrypnik, M. V. Voitovych, Interior continuity, continuity up to the
boundary and Harnack's inequality for double-phase elliptic equations with non-logarithmic
growth, Math. Nachrichten, \textbf{296}(9) (2023), 3892--3914.

\bibitem{HarKinLuk}
P. Harjulehto, J. Kinnunen, T. Lukkari,
Unbounded super-solutions of nonlinear equations with
nonstandard growth, Boundary Value Problems \textbf{2007} (2007): Article ID 48348, 20 p.

\bibitem{HarHasLee}
P. Harjulehto, P. H\"{a}st\"{o}, M. Lee,
H\"{o}lder continuity of quasiminimizers and $\omega$-minimizers of functionals with generalized
Orlicz growth,  Ann. Sc. Norm. Super Pisa Cl. Sci \textbf{5}  XXII
(2021), no.2, 549--582.

\bibitem{HarKuuLukMarPar}
P. Harjulehto, T. Kuusi, T. Lukkari, N. Marolo, M. Parviainen,
Harnack's inequality for quasiminimizers with nonstandard growth conditions,
J. Math. Anal. Appl. \textbf{344}(1) (2008), 504--520.

\bibitem{KrySaf}
N. V. Krylov, M. V. Safonov,
A property of the solutions of parabolic equations with measurable coefficients, Izv. Akad. Nauk SSSR Ser. Mat. \textbf{44}(1)
(1980), 161--175.

\bibitem{Kuu}
T. Kuusi,
The weak Harnack estimate for weak super-solutions to nonlinear
degenerate parabolic equations, Ann. Scuola Norm. Sup. Pisa Cl. Sci. (5),
7(4)  (2008), 673--716.

\bibitem{Ok}
J. Ok,
Regularity for double phase problems under additional integrability assumptions,
Nonlinear Anal. \textbf{194} (2020) 111408.

\bibitem{RagTac}
M. A. Ragusa, A. Tachikawa,
Regularity for minimizers for functionals of double phase with variable exponents,
Adv. Nonl. Anal. \textbf{9}(1) (2020), 710--728.

\bibitem{SavSkrYev1}
M. O. Savchenko, I. I. Skrypnik, Ye. A. Yevgenieva,
A note on the weak Harnack inequality for unbounded  minimizers of elliptic functionals with generalized Orlicz growth,
to appear.

\bibitem{SavSkrYev2}
M. O. Savchenko, I. I. Skrypnik, Ye. A. Yevgenieva,
Harnack's inequality for degenerate double phase parabolic equations under the non-logarithmic
Zhikov's condition, Ukrainian Mat. Visnyk, in press.

\bibitem{ShaSkrVoi}
M.\,A. Shan,  I.\,I. Skrypnik,  M.\,V. Voitovych,
Harnack's inequality for quasilinear elliptic equations with
generalized Orlicz growth, Electr. J. of Diff. Equations, \textbf{2021} (2021), no. 27,  1--16.

\bibitem{Skr}
I. I. Skrypnik,
Harnack's inequality for singular parabolic equations with generalized Orlicz growth under the non-logarithmic Zhikov's condition,
Journal of Evolution Equations \textbf{22}(2) (2022), no. 45, https://doi.org/10.1007/s00028-022-00794-7.

\bibitem{SkrVoi}
I.\,I. Skrypnik, M.\,V. Voitovych,
$\mathcal{B}_{1}$ classes of De Giorgi-Ladyzhenskaya-Ural'tseva and
their applications to elliptic and parabolic equations with generalized Orlicz
growth conditions, Nonlinear Anal. \textbf{202} (2021) 112--135.

\bibitem{SkrVoi1}
I.\,I. Skrypnik, M.\,V. Voitovych,
On the continuity of solutions of quasilinear parabolic equations with generalized Orlicz growth under non-logarithmic conditions,
Annali di Matematica Pura ed Applicatathis, \textbf{201}(3) (2022) 1381--1416.

\bibitem{Sin}
T. Singer. Local boundedness of variational solutions to evolutionary problems with nonstandard growth,
 Nonl. DEA.  \textbf{23} (2) (2016) Art. 19, 23,

\bibitem{Sur}
M. D. Surnachev, On the weak Harnack inequality for the parabolic $p(x)$- Laplacian,
Asymptotic Analysis, DOI:10.3233/ASY-211746 (2021).

\bibitem{Wan}
Wang Y.,
Intrinsic Harnack inequalities for parabolic equations with variable exponents.
Nonlinear Anal. \textbf{83} (2013), 12--30.

\end{thebibliography}
\end{document}